\theoremstyle{plain}
\newtheorem{thm}{Theorem}[section]
\newtheorem{thm*}{Theorem}[section]
\newtheorem{cor}[thm]{Corollary}
\newtheorem{prop}[thm]{Proposition}
\newtheorem{lemma*}{Lemma}
\newtheorem{question}[thm]{Question}
\newtheorem{hypothesis}[thm]{Hypothesis}
\theoremstyle{definition}
\newtheorem{defn}[thm]{Definition}
\newtheorem{remark}[thm]{Remark}
\newtheorem{note}[thm]{Notation}
\newtheorem*{remark*}{Remark}
\newtheorem{ex}[thm]{Example}
\newtheorem{notation}[thm]{Notation}
\newcommand{\bbH}{\mathbb H}
\newcommand{\cM}{\mathcal M}
\newcommand{\cN}{\mathcal N}
\newcommand{\cU}{\mathcal U}
\def\Spec{\operatorname{Spec}\nolimits}
\def\Proj{\operatorname{Proj}\nolimits}
\newcommand{\bG}{\mathbb G}
\newcommand{\cO}{\mathcal O}
\newcommand{\bA}{\mathbb A}
\newcommand{\cF}{\mathcal F}
\newcommand{\bP}{\mathbb P}
\newcommand{\bU}{\mathbb U}
\newcommand{\bZ}{\mathbb Z}
\newcommand{\bF}{\mathbb F}
\newcommand{\cC}{\mathcal C}
\newcommand{\cL}{\mathcal L}
\newcommand{\cE}{\mathcal E}
\newcommand{\fg}{\mathfrak g}
\newcommand{\fgp}{\mathfrak g_{\mathbb F_p}}
\newcommand{\fu}{\mathfrak u}
\newcommand{\gl} {\mathfrak {gl}}
\newcommand{\ol}{\overline}
\newcommand{\ul}{\underline}
\def\Spec{\operatorname{Spec}\nolimits}
\def\sl2{\operatorname{SL_{2(2)}}\nolimits}
\def\Ga2{\operatorname{\mathbb G_{\rm a(2)}}\nolimits}
\newcommand{\bE}{\mathbb E}
\newcommand{\bu}{\bullet}
\date\today
\begin{document}

 \title[Geometric invariants for representations of finite groups]{Geometric invariants of representations of finite groups}
 
 \author[ Eric M. Friedlander]
{Eric M. Friedlander$^{*}$} 

\address {Department of Mathematics, University of Southern California,
Los Angeles, CA}
\email{ericmf@usc.edu}

\thanks{$^{*}$ partially supported by the Simons Foundation}

\subjclass[2010]{20G05, 20C20, 20G10}

\keywords{rational cohomology, Frobenius kernels, unipotent algebraic groups}

\begin{abstract} 
J. Pevtsova and  the author constructed a ``universal $p$-nilpotent operator" for an infinitesimal group
scheme $G$ over a field $k$ of characteristic $p > 0$ which led to coherent sheaves on the scheme of 
1-parameter subgroups of $G$ associated to a $G$-module $M$.  Of special interest is the fact that these 
coherent sheaves are vector bundles if $M$ is of constant Jordan type.  In this paper, we provide similar invariants for 
a finite group $\tau$ which recover the invariants earlier obtained for elementary abelian $p$-groups.  To do this, 
we replace the analogue of 1-parameter subgroups by a refined version of equivalence
classes of $\pi$-points for $k\tau$.   More generally, we provide a construction of vector bundles for the semi-direct
product $G\rtimes \tau$ of an infinitesimal group scheme $G$ and a finite grooup $\tau$.

A major motivation for this study is to further our understanding of the relationship
between representations of $\bG(\bF_p)$ and $\bG_{(r)}$
associated to a finite dimensional rational $\bG$-module $M$, where $\bG$ is a reductive group with 
$r$-th Fobenius kernel $\bG_{(r)}$.
Using vector bundles, we extend and sharpen earlier results comparing support varieties.  
\end{abstract}

\maketitle


\section{Introduction}

As shown to us by Quillen \cite{Q1}, \cite{Q2}, the 
spectrum $\Spec H^\bu(\tau,k)$ of the even dimensional cohomology of a finite group $\tau$ is an important invariant,
where $k$ is a field of characteristic $p$ for some prime $p$ dividing the order of $\tau$.
(If $p=2$, we set $H^\bu(\tau,k) = H^*(\tau,k)$, the full cohomology algebra.)  For $\tau \ = \ E \ \simeq \ \bZ/pZ^{\times r}$,
an elementary abelian $p$-group of rank $r$, there is a natural map $S^\bu((J_E/J_E)^*) \ \to \ H^\bu(E,k)$ which
induces a homeomorphism of prime ideal spectra, $\Spec H^\bu(E,k) \ \to \ \bA_{J_E/J_E^2}$.  Here, $S^\bu(V)$ denotes 
the commutative, graded $k$ algebra generated by $V$ and $\bA_{J_E/J_E^2}
 \equiv \Spec S^\bu((J_E/J_E)^*)$ is the affine space associated to the vector space $J_E/J_E^2$.  Much of this paper
confronts the challenge posed by the observation that there does not appear to be a ``canonical" way to associate
 to a point in $\bA_{J_E/J_E^2}$ a $p$-nilpotent operator on $kE$-modules.

In contrast, if $\fg$ is a $p$-restricted Lie algebra with restricted enveloping  algebra $\fu(\fg)$, $\Spec H^\bu(\fu(\fg),k)$ maps 
homeomorphically (i.e., is a map of schemes which is a ``$p$-isogoeny" in the sense of \cite{Q1})
 to the variety $\cN_p(\fg)$ of $p$-nilpotent elements of $\fg$ (see \cite{FPar}, \cite{SFB2}).  For any $\fu(\fg)$-module
$M$, the action of $\fg$ on $M$ affords the action of $X$ on $M$ for any $X \in \cN_p(\fg)$.   In particular, if 
$\fg = \fg_a^{\oplus r}$, the commutative Lie algebra of dimension $r$ over $k$ with trivial $p$-restriction, then
$\fu( \fg_a^{\oplus r}) \ = \ S^\bu(\epsilon)/(X^p, X \in \epsilon)$, where $\epsilon$ is the underlying vector space of 
$g_a^{\oplus r}$.  In this case, $S^\bu(\epsilon^*) \to H^\bu(\fu( \fg_a^{\oplus r}),k)$ induces an isomorphism of reduced algebras
yielding the $p$-isogeny $\Spec H^\bu(\fu( \fg_a^{\oplus r}),k) \ \stackrel{\simeq}{\to} \ \bA_{\epsilon}$.  
Thus, if we choose an isomorphism $kE \ \simeq \fu(\fg_a^{\oplus r})$ (thereby identifying $J_E/J_E^2$ with $\epsilon$),
then we can associate $p$-nilpotent operators on $kE$-modules to points of $\bA_{\epsilon}$.  This is
the approach taken by various authors to associate vector bundles on projective spaces (namely, $\Proj S^\bu(\epsilon^*)$)
to $kE$-modules of constant Jordan type (see \cite{FP4}, \cite{BP}, \cite{Ben}).

For a given elementary abelian $p$-group $E$, a choice of $k$-vector space splitting $J_E/J_E^2 \to J_E$ of the quotient map 
$J_E \to J_E^2$ determines an isomorphism $S^\bu(J_E/J_E^2) \to kE$ and thus an isomorphism $\fu(\fg_a^{\oplus r})
 \ \simeq \ kE$ (where we have identified the underlying $k$-vector space of $\fg_a^{\oplus r}$ with $J_E/J_E^2$).
A reasonable splitting is given once a choice of generating set $\{ g_1,\ldots,g_r \}$ of $E$ is made.  Such a 
choice determines the generating set $\{ x_1 \ = \ g_1 -1, \ldots,  x_r \ = \ g_r -1 \}$ of the ideal $J_E$ whose
image $\{ \ol x_1,\ldots,\ol x_r \}$ is a basis for $J_E/J_E^2$.   
This is  the approach 
used in the past when investigating vector bundles associated to certain $kE$-modules as in \cite{Ben}, \cite{BP}.

Our ``solution" to this lack of naturality of associating a $p$-nilpotent operator to an element of $J_E/J_E^2$ is to replace 
$S^\bu((J_E/J_E^2)^*)$ by $S^\bu(J_E^*)$, thereby avoiding the necessity of a choice of splitting.    To the reader familiar with support varieties, our construction is somewhat analogous to replacing 
J. Carlson's cyclic shifted subgroups \cite{C} in the definition of support varieties for $kE$-modules by 
equivalence classes of $\pi$-points considered by Pevtsova and the author for any finite group $\tau$ (see \cite{FP2}).    
We introduce a universal $p$-nilpotent operator which is {\it natural} with respect to $E$, $\Theta_E \in S^\bu(J_E^*) \otimes kE$.
The operator  $\Theta_E $determines a $p$-nilpotent operator 
$\Theta_{E,M}: S^\bu(J_E^*) \otimes M \to  S^\bu(J_E^*) \otimes M$ for each $kE$-module $M$.
Taking kernels, images, and cokernels of powers of the projective variant $\bP \Theta_{E,M}: \cO_{\bP_{J_E}}\otimes M 
\to  \cO_{\bP_{J_E}}(1) \otimes M$ of $\Theta_{E,M}$ provides coherent sheaves 
on $\bP_{J_E} \equiv \Proj S^\bu(J_E^*)$.  If $M$ is a 
module of constant Jordan type for $E$, then these coherent sheaves are vector bundles.
Proposition \ref{prop:elem} shows how to recover from this construction the earlier construction of vector bundles 
introduced in \cite{FP4} using a choice of splitting of $J_E \to J_E/J_E^2$. 

In order to consider an arbitrary finite group $\tau$, we introduce in Section \ref{sec:tau} the commutative $k$-algebra 
$A_\tau$ defined as the inverse limit of polynomial algebras $S^\bu(J_E^*)$ indexed by elementary abelian 
$p$-subgroups $E \subset \tau$ and consider the affine $k$-scheme 
$X_\tau \ =  \ \Spec A_\tau \ \simeq \  \varinjlim_E \bA_{J_E}$ equipped with the action of $\tau$ given by 
conjugation.   We also consider
$X_\tau^{(2)} \simeq \  \varinjlim_E \bA_{J_E^2}$ and $Y_\tau \simeq \  \varinjlim_E \bA_{J_E/J_E^2}$.  The naturality
of $\Theta_E$ enables the definition of the 
$p$-nilpotent operator $\Theta_\tau \ \in \ A_\tau \otimes k\tau$.  This determines the $p$-nilpotent, $\tau$-invariant $A_\tau$-endomorphism $\Theta_{\tau,M}$ of $M \otimes A_\tau$ and its projective variant $\bP \Theta_{\tau,M}$
for any $k\tau$-module $M$.

Associated to a finite dimensional $k\tau$-module $M$, we construct $\tau$-equivariant
coherent sheaves on $\bP X_\tau$ in Theorem \ref{thm:projbundle} by taking kernels, images, and cokernels of 
iterates of $\bP \Theta_{\tau,M}$.  For $k\tau$-modules of constant
$j$-rank, these coherent sheaves restrict to $\tau$-equivariant vector bundles on $\bU_\tau \ \equiv \ 
\bP X_\tau \backslash \bP X_\tau^{(2)}$.   Such $\tau$-equivariant vector bundles can be identified with vector bundles on the quotient stack $[\bU_\tau/\tau]$.  The quotient scheme $(\bP Y_\tau)/\tau$ is 
$p$-isogenous (and thus homeomorphic) to $\Proj H^\bu(\tau,k)$.  Thus, our construction of vector bundles on $[\bU_\tau/\tau]$
is analogous to the construction in \cite{FP1} and
\cite{FP2} of vector bundles on a scheme $p$-isogenous to $\Proj H^\bu(G,k)$ for an infinitesimal group scheme. 

In Section \ref{sec:Ktheory}, we observe that the class in $K_0(\bU_\tau)$  of these bundles
(forgetting the action of $\tau$) is somewhat
accessible since the natural projection $p_\tau: \bU_\tau \to \bP Y_\tau$ induces an isomorphism on 
Grothendieck groups of vector bundles.

For a reductive group $\bG$ defined over $\bF_p$ and a finite dimensional rational $\bG$-module $M$, we revisit 
in Section \ref{sec:rational} the challenge of comparing the restrictions of $M$ to $\bG(\bF_p)$ and $\bG_{(r)}$ (the
$r$-th Frobenius kernel of $\bG$).  We relate our support variety for the finite group $\bG(\bF_p)$ to the support
variety for $\bG_{(1)}$ (the $p$-nilpotent cone $\cN_p(\fg)$ of $\fg = Lie(\bG)$) and then ``lift" this relation
to the support variety $V_r(\bG)$ of $\bG_{(r)}$.   Our investigation refines and extends the original work of 
Z. Lin and D. Nakano \cite{LN} proving ``Parshall's Conjecture" comparing the support varieties for 
$\bG(F_p)$ and $\bG_{(1)}$ and further work by various authors (e.g., \cite{CLN} and \cite{F2}).

In Section \ref{sec:Gtau}, we combine the construction given in \cite{FP4}
of coherent sheaves associated to finite dimensional $G$-modules
for $G$ infinitesimal  with our construction for finite groups. The result is a construction which yields 
vector bundles associated to modules
of constant Jordan type (or, more generally, of constant $j$-rank) for any finite group scheme
of the form $G \rtimes \tau$, where
$G$ is an infinitesimal group scheme and $\tau$ is a finite group.  For $k$ algebraically closed, every finite group
scheme is of this form.

Throughout this paper, $p$ is a fixed prime, $k$ a field of characteristic $p$, and $\tau$ is a finite group of order
divisible by $p$.

We thank Jesse Burke, Marc Hoyois, Julia Pevtsova, and Paul Sobaje for various conversations which have 
helped shape this paper.
\vskip .2in


\section{$X_{\tau}$  for a finite group $\tau$}
\label{sec:tau}

We begin this section with the following useful theorem of D. Ferrand which we will apply to obtain
various schemes associated to $\tau$.  The reader familiar with cohomological support varieties can interpret these
schemes as geometric variations of $\Spec H^\bu(\tau,k)$.

\begin{thm} \cite[5.1]{Fer}
\label{thm:ferrand}
Consider a Cartesian square of commutative $k$-algebras
\begin{equation}
\label{diag:cart}
\begin{xy}*!C\xybox{%
\xymatrix{
A_{1,2} & A_1  \ar[l] \\
A_2 \ar[u]^q & A \ar[l]  \ar[u]
 }
}\end{xy}
\end{equation}
(i.e.,  $A \simeq  A_1 \times _{A_{1,2}} A_2$)
and assume that $q$ is surjective.  Then 
\begin{equation}
\label{diag:cocart}
\begin{xy}*!C\xybox{%
\xymatrix{
\Spec A_{1,2} \ar[d]_q \ar[r] & \Spec A_1 \ar[d] \\
\Spec A_2 \ar[r] & \Spec A
 }
}\end{xy}
\end{equation}
is cocartesian (i.e., a push-out square) in the category of $k$-schemes
(and $q$ is a closed immersion).
\end{thm}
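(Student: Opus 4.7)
The plan is to verify directly that the square of affine schemes in the conclusion has the universal property of the pushout in the category of $k$-schemes. I would proceed in three stages.

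First, I would record the ring-theoretic facts that flow from surjectivity of $q$. A diagram chase in the Cartesian square shows that the projection $A \to A_1$ is surjective and that its kernel is canonically identified with $\Ker(q)$ via the inclusion $\Ker(q) \hookrightarrow A_2$. Dually, this says that both $\Spec A_{1,2} \to \Spec A_2$ and $\Spec A_1 \to \Spec A$ are closed immersions cut out by matching ideal sheaves. The closed-immersion assertion of the theorem is then immediate, and the square now looks like the expected gluing of two schemes along a common closed subscheme.

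Second, I would verify the universal property against an affine test scheme $Z = \Spec R$. A pair of morphisms $f_i: \Spec A_i \to Z$ whose restrictions to $\Spec A_{1,2}$ coincide corresponds to a pair of $k$-algebra homomorphisms $R \to A_i$ whose compositions into $A_{1,2}$ agree; the universal property of the fiber product of rings then produces a unique $R \to A$, equivalently a unique morphism $\Spec A \to Z$ filling in the square. This step is essentially formal once the first step is in place.

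Third, I would pass from affine $Z$ to arbitrary $Z$ via a Zariski gluing argument, which is the technical heart of the proof. The key input is a description of $\Spec A$ as a ringed space: the underlying topological space $|\Spec A|$ is the set-theoretic pushout of $|\Spec A_1|$ and $|\Spec A_2|$ along $|\Spec A_{1,2}|$, a prime-ideal computation using the fiber-product description of $A$; moreover, a distinguished affine open $D(a) \subset \Spec A$ has preimages in $\Spec A_i$ that are themselves distinguished affine opens, determined by the images of $a$. Given a compatible pair $(f_1, f_2)$ into a general $Z$, one defines the underlying continuous map $|\Spec A| \to |Z|$ from the pushout description, covers $Z$ by affine opens, refines the preimages by distinguished opens of $\Spec A$ on which the affine case applies, and glues. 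The main obstacle is ensuring that such affine refinements of $\Spec A$ can be chosen so that the locally defined sheaf morphisms agree on overlaps; this again uses surjectivity of $q$ through the fact that the distinguished opens of $\Spec A$ are controlled by their images in $\Spec A_1$ and $\Spec A_2$. Once the topological and structure-sheaf compatibility is in hand, uniqueness and functoriality are formal.
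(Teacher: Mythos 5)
The paper does not prove this statement: it is quoted directly from Ferrand's article as \cite[5.1]{Fer}, so there is no internal argument to measure your proposal against. Your outline is nonetheless a correct and essentially complete reconstruction of the standard proof of the affine case of Ferrand's pinching theorem. Two points in your third stage carry almost all of the content and should be written out rather than asserted. First, the topological claim: writing $I = \Ker(A \to A_1) = 0 \times \Ker(q)$ and $J = \Ker(A \to A_2)$, one has $I\cdot J = 0$, so every prime of $A$ contains $I$ or $J$; the primes containing $I$ form $\Spec A_1$, while for $x = (0,y)$ with $y \in \Ker(q)$ one checks $A[1/x] \cong A_2[1/y]$ (surjectivity of $q$ enters here, since it guarantees $ya_2$ lies in the image of $A \to A_2$ for every $a_2 \in A_2$), which identifies $\Spec A \setminus V(I)$ with $\Spec A_2 \setminus \Spec A_{1,2}$. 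Note that for the induced map to $|Z|$ to be continuous you need the quotient-topology statement, not merely the bijection of sets; this is true with no hypotheses beyond surjectivity of $q$ (given $T$ with closed preimages $V(\fa_1)$ and $V(\fa_2)$, one shows $T = V((\fa_1 \times \fa_2)\cap A)$ by the same two cases), but it is an argument, not a formality. Second, the gluing: the reason the locally defined morphisms agree on overlaps is that the Cartesian square localizes --- exactness of localization applied to $0 \to A \to A_1 \times A_2 \to A_{1,2}$ gives $A[1/a] \cong A_1[1/a_1] \times_{A_{1,2}[1/\bar a_1]} A_2[1/a_2]$ with the localized $q$ still surjective --- so the uniqueness half of your affine step applies on each basic open $D(ab)$. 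With those two points made explicit your proof is complete, and correctly requires no finiteness or integrality hypotheses beyond the surjectivity of $q$.
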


The following proposition extends Theorem \ref{thm:ferrand} to certain colimits realized as iterated push-out squares.

\vskip .1in
\begin{prop}
\label{prop:limits-colimits}
Let $\cC$ be a category whose set of objects is a finite collection $\{ V_i,  \ i \in I\}$ of subspaces of a fixed 
finite dimensional $k$-vector space $V$ and whose maps are inclusions (commuting with the fixed $k$-linear
embeddings $V_i \subset V$).   Let $\bA_{V _i }\  \equiv \ \Spec S^\bu(V_i^*)$ denote the affine $k$-space 
naturally associated to $V_i$.
\begin{enumerate}
\item
The finitely generated, commutative $k$-algebra $\varprojlim_\cC S^\bu(V_i^*)$ satisfies the property that 
$$\Spec \varprojlim_\cC (S^\bu(V_i^*)) \ \simeq \ \varinjlim_\cC \bA_{V _i},$$
where the limit is taken in the catgory of $k$-algebras and the colimit is taken in the category of $k$-schemes.
\item
The nilradical of $\varprojlim_\cC S^\bu(V_i^*)$ is trivial, so that $\varinjlim_\cC \bA_{V _i}$ is reduced.
\item
For each $j \in J$, the  canonical maps $\bA_{V_j} \to \varinjlim_\cC \bA_{V _i}$ and 
$\varinjlim_\cC \bA_{V _j} \ \to \  \Spec S^\bu(V^*)$ are closed immersions.
\end{enumerate}
\end{prop}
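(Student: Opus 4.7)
I would prove all three parts simultaneously by induction on $|I|$, using Ferrand's theorem (Theorem \ref{thm:ferrand}) as the engine of the inductive step. The base case $|I|=1$ is trivial: $\varprojlim_\cC S^\bu(V_i^*) = S^\bu(V_1^*)$ is a polynomial ring (in particular finitely generated and reduced), and the map $S^\bu(V^*) \twoheadrightarrow S^\bu(V_1^*)$ dual to $V_1 \hookrightarrow V$ realizes it as a quotient, giving (1)--(3).

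For the inductive step, I would pick $V_m \in \cC$ maximal with respect to inclusion and set $\cC' = \cC \setminus \{V_m\}$ and $\mathcal{D} = \{V_j \in \cC' : V_j \subset V_m\}$. Writing $A = \varprojlim_\cC S^\bu(V_i^*)$, $A' = \varprojlim_{\cC'} S^\bu(V_i^*)$, and $A_{\mathcal{D}} = \varprojlim_{\mathcal{D}} S^\bu(V_j^*)$, the universal property of limits yields a Cartesian square
\begin{equation*}
\xymatrix{
A \ar[r] \ar[d] & S^\bu(V_m^*) \ar[d] \\
A' \ar[r] & A_{\mathcal{D}}.
}
\end{equation*}
The inductive hypothesis, applied to $\mathcal{D}$ viewed inside the ambient space $V_m$, identifies $A_{\mathcal{D}}$ as a quotient of $S^\bu(V_m^*)$ (this is (3) for $\mathcal{D}$), so the right-hand vertical map is surjective. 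Ferrand's theorem then produces the identification
\begin{equation*}
\Spec A \simeq \Spec A' \sqcup_{\Spec A_{\mathcal{D}}} \bA_{V_m}
\end{equation*}
as a push-out in $k$-schemes. Combined with the inductive identifications of $\Spec A'$ and $\Spec A_{\mathcal{D}}$, together with the standard decomposition of a finite poset colimit as an iterated push-out along a maximal element, this yields (1).

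Parts (2) and (3) follow as byproducts. Reducedness of $A$ is immediate from the embedding $A \hookrightarrow \prod_i S^\bu(V_i^*)$ into a product of polynomial rings. For (3), the composite $S^\bu(V^*) \to A \to S^\bu(V_j^*)$ is the canonical surjection dual to $V_j \hookrightarrow V$, so the projection $A \twoheadrightarrow S^\bu(V_j^*)$ is surjective, inducing the closed immersion $\bA_{V_j} \hookrightarrow \Spec A$. That $S^\bu(V^*) \twoheadrightarrow A$ is itself surjective --- equivalently, that $\Spec A \hookrightarrow \bA_V$ is a closed immersion and $A$ is finitely generated --- is extracted inductively from the push-out description: $\Spec A$ is realized as the scheme-theoretic union in $\bA_V$ of two closed subschemes (inductively identified), hence is itself a closed subscheme.

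The main obstacle is the surjectivity hypothesis of Ferrand's theorem at each inductive step, namely $S^\bu(V_m^*) \twoheadrightarrow A_{\mathcal{D}}$. This is precisely the closed-immersion statement of (3) applied inductively to $\mathcal{D}$, which forces one to bundle (1), (2), (3) into a single inductive statement. A subtle auxiliary point is verifying that the scheme-theoretic union in $\bA_V$ of the two inductive pieces coincides with the Ferrand push-out; this in turn rests on the inductively-arranged identification of the scheme-theoretic intersection $\Spec A' \cap \bA_{V_m}$ with $\Spec A_{\mathcal{D}}$, for which one wants $\cC$ (hence $\mathcal{D}$) to be stable enough under intersections to close the induction.
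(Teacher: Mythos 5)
Your plan follows the paper's proof in its essentials: an induction that peels off one object of $\cC$ at a time, a Cartesian square of algebras fed into Ferrand's theorem to produce the push-out, and the embedding $\varprojlim_\cC S^\bu(V_i^*) \hookrightarrow \prod_i S^\bu(V_i^*)$ for reducedness. The one structural difference is the choice of decomposition. You remove a maximal $V_m$ and glue along $A_{\mathcal D}=\varprojlim_{\mathcal D}S^\bu(V_j^*)$ with $\mathcal D$ the objects below $V_m$; that square is Cartesian for purely formal reasons (by maximality, the only compatibility conditions involving $V_m$ are with objects of $\mathcal D$), and the surjectivity needed for Ferrand is inductive assertion (3) for $\mathcal D$ inside $V_m$. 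The paper instead removes a $V_0$ containing a vector outside the span of the remaining $V_i$ (so the induction can run on $\dim V$) and glues along $\varprojlim_{\cC^\prime}S^\bu((V_i\cap V_0)^*)$, a square which is Cartesian only when the intersections $V_i\cap V_0$ are accounted for by $\cC$. Your version is formally cleaner on the Cartesian side; the paper's is arranged to make the surjectivity and part (3) computable.

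The point where your sketch is genuinely incomplete is exactly the one you flag: the surjectivity of $S^\bu(V^*)\to\varprojlim_\cC S^\bu(V_i^*)$ in part (3). Deducing it from ``a scheme-theoretic union of closed subschemes of $\bA_V$ is closed'' requires identifying the Ferrand push-out with the scheme-theoretic union, i.e.\ identifying $\Spec A_{\mathcal D}$ with the scheme-theoretic intersection of $\Spec A^\prime$ and $\bA_{V_m}$ inside $\bA_V$. This is not a formality and fails for an arbitrary finite collection of subspaces: for three distinct lines through the origin of $k^2$ with no containments among them, the limit algebra is the full product of three polynomial rings, the colimit is a disjoint union of three affine lines, and the canonical map to $\bA^2$ is not a closed immersion. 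The paper handles (3) instead by a direct computation: it chooses a basis adapted to $V^\prime\setminus V_0$, $V^\prime\cap V_0$, $V_0\setminus V^\prime$ and exhibits an explicit preimage $f(\ul x,\ul y)+g(\ul y,\ul z)-h(\ul y)$ of a compatible pair, where $h$ is the common restriction. Even that argument tacitly uses that the collection is stable under the relevant intersections (true in all of the paper's applications, where the $V_i=J_E$ are spanned by subsets of a common basis and the indexing poset is closed under intersection). So to close your induction you must either add such a stability hypothesis explicitly and prove the intersection identification, or import the paper's explicit lifting; as written, part (3) --- the only assertion with real content --- is not yet proved.
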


\begin{proof}
We proceed by induction on the dimension of $V$, which we may assume to be spanned by the $V_i$; in other words, we may
assume that $V \simeq \varinjlim_\cC V_i$, where the limit is taken in the category of $k$-vector spaces.  
Let $V_0 \in \cC$ be chosen such that $V_0$ contains some element not in
the span $V^\prime$, the span of $\{ V_i, \ 0 \not= i \}$.   
If $V_0 = V$, then the assertions of the proposition are trivial, so we may assume that 
$V_0$ is a proper subspace of $V$.   Denote by $\cC^\prime$ the full subcategory of $\cC$ whose objects are those
of $\cC$ except $V_0$, and set $V^\prime \simeq \varinjlim_{\cC^\prime} V_i$.   

Observe that
\begin{equation}
\label{diag:cart1}
\begin{xy}*!C\xybox{%
\xymatrix{
\varprojlim_{\cC^\prime} S^\bu((V_i \cap V_0)^*)  & S^\bu(V_0^*)  \ar[l] \\
\varprojlim_{\cC^\prime} S^\bu(V_i^*)  \ar[u] & \varprojlim_{\cC} S^\bu(V_i^*)  \ar[l]  \ar[u]
 }
}\end{xy}
\end{equation}
is Cartesian.  By induction, the left vertical map of (\ref{diag:cart1}) is surjective which implies that the right vertical map
is surjective since (\ref{diag:cart1}) is Cartesian.  Consequently, Theorem \ref{thm:ferrand} plus induction 
implies assertion (1).  Equivalently, the following is a push-out square of schemes
\begin{equation}
\label{diag:cocart1}
\begin{xy}*!C\xybox{%
\xymatrix{
\varinjlim_{\cC^\prime} \bA_{V_i \cap V_0}  \ar[d] \ar[r]& \bA_{V_0}  \ar[d] \\
\varinjlim_{\cC^\prime} \bA_{V_i } \ar[r] & \varinjlim_{\cC} \bA_{V_i } 
 }
}\end{xy}
\end{equation}

Assertion (2) follows from the observation that the natural map $\varprojlim_\cC S^*(V_i) \ \to  \prod_{V_i \in \cC} S^*(V_i)$
is injective.

To prove the surjectivity of $S^\bu(V^*) \to \varprojlim_\cC S^\bu(V _i)$ as required
in assertion (3), we proceed once again by induction on the number of objects of $\cC$, so that the assertion is assumed 
valid for the proper subspace $V^\prime \subset V$.  Choose a basis of $V$ consisting of elements 
$x_1,\ldots,x_m \in V^\prime \backslash V_0$, elements $y_1,\ldots, y_n \in V^\prime \cap V_0$, and elements 
$z_1\ldots,z_\ell \in V_0 \backslash V^\prime$.  
Let the pair $(f(\ul x, \ul y), g(\ul y, \ul z)) \in \varprojlim_{\cC^\prime} S^\bu(V_i^*) \times S^\bu(V_0^*)$ 
restrict to  an element in $\varprojlim_{\cC} S^\bu((V_i\cap V_0)^*)$; thus,
$f(0,\ul y) = g(\ul y,0) \in \varprojlim_{\cC^\prime} S^\bu((V_i \cap V_0)^*)$.  Then $f(\ul x,\ul y) + g(\ul y,\ul z) - h(\ul y) \in
S^\bu(V^*)$ maps to $f(\ul x, \ul y), g(\ul y, \ul z) $, where $h(\ul y)$ is the polynomial $f(0,\ul y) = g(\ul y,0)$.
\end{proof}

Let $E \simeq \bZ/p^{\times r}$ be an elementary abelian $p$-group with identity element $e$, group algebra $kE$,
and augmentation ideal $J_E \ \subset \ kE$ with natural basis $\{g-e, e \not= g \in E\}$ as a $k$-vector space.  Our
strategy is to replace the $r$-dimensional $k$-vector space $J_E/J_E^2$ by the $p^r-1$-dimensional space
$J_E$ and/or the pair $(J_E, J_E^2)$.  

\vskip .1in
\begin{defn}
\label{defn:tau-category}
For any finite group $\tau$, we denote by $\cE(\tau)$ the category whose objects are elementary abelian $p$-subgroups
of $\tau$ and whose maps are inclusions.    If $E \in \cE(\tau)$ and $x\in \tau$, we denote by $E^x \subset \tau$
the subgroup consisting of elements $ xgx^{-1}, g \in E$ and let $c_x: \tau \to \tau$ denote conjugation 
by $x$ sending $g \in \tau$ to  $g^x \equiv xgx^{-1}$.   We also denote by $c_x: kE  \to kE^x$ the induced map
on group algebras. 

Conjugation determines the action
\begin{equation}
\label{eqn:tau-on-E}
\tau \times \cE(\tau) \ \to \ \cE(\tau), \quad (x,E) \mapsto E^x.
\end{equation}
\end{defn}

The following proposition introduces the affine scheme $X_\tau$ for a finite group $\tau$, 
specializing to
the affine space of dimension $p^r-1$, 
$\Spec S^\bu(J_E^*) \ \equiv \bA_{J_E}$, for $\tau$ equal to an elementary abelian $p$-group $E$ of rank $r$.

\begin{prop}
\label{prop:limits}
The algebra $A_\tau \ \equiv \ \varprojlim_{E \in \cE(\tau)} S^\bu(J_E^*)$ is a finitely generated commutative
$k$-algebra with a natural grading induced by the grading on each symmetric algebra $S^\bu(J_E^*)$.  
Moreover, the affine $k$-scheme
$$X_\tau\ \equiv  \ \Spec A_\tau \ = \ \Spec  (\varprojlim_{E \in \cE(\tau)} S^\bu(J_E^*))$$
equals the colimit in the category of schemes given by
$$X_\tau \simeq \ \varinjlim_{E \in \cE(\tau)} \bA_{J_E}, \quad \bA_{J_E} \equiv \Spec S^\bu(J_E^*).$$
Furthermore, $X_\tau$ is reduced (i.e., $A_\tau$ has no non-zero nilpotent elements).

A point of $X_\tau$ is a point of some affine space $ \bA_{J_E},$ and thus a $k$-rational point
can be viewed as an element of $J_E$ for some $E \in \cE(\tau)$.
\end{prop}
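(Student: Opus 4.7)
The plan is to obtain this proposition as a direct specialization of Proposition \ref{prop:limits-colimits}, using the category $\cE(\tau)$ together with the functor $E \mapsto J_E$ into subspaces of the ambient finite-dimensional $k$-vector space $V := J_\tau \subset k\tau$.

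First I would check the hypotheses of Proposition \ref{prop:limits-colimits}. Since $\tau$ is finite, $\cE(\tau)$ has only finitely many objects and its morphisms are inclusions. For $E \subset E'$ in $\cE(\tau)$, the augmentation ideal $J_E$ has $k$-basis $\{g - e : g \in E, g \neq e\}$, which is a subset of the analogous basis of $J_{E'}$, and both sit inside the natural basis of $J_\tau$; hence the inclusions $J_E \hookrightarrow J_{E'}$ commute with the fixed embeddings $J_E \subset J_\tau$. Thus the collection $\{J_E\}_{E \in \cE(\tau)}$ meets the conditions of Proposition \ref{prop:limits-colimits}.

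Having set this up, parts (1), (2), (3) of Proposition \ref{prop:limits-colimits} give the three main assertions more or less immediately. Part (1) yields $\Spec A_\tau \simeq \varinjlim_{E \in \cE(\tau)} \bA_{J_E}$. The grading on $A_\tau$ is automatic: each restriction map $S^\bu(J_{E'}^*) \to S^\bu(J_E^*)$ dual to a linear inclusion $J_E \hookrightarrow J_{E'}$ is a morphism of graded $k$-algebras, so the inverse limit inherits a natural grading degree by degree. For finite generation, I would invoke part (3), which identifies the canonical map $\varinjlim_E \bA_{J_E} \to \Spec S^\bu(J_\tau^*)$ as a closed immersion; dually, $S^\bu(J_\tau^*) \twoheadrightarrow A_\tau$ is surjective, presenting $A_\tau$ as a quotient of a polynomial ring in finitely many variables. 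Part (2) gives that $A_\tau$ is reduced.

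Finally, for the description of points, the closed immersions $\bA_{J_E} \hookrightarrow X_\tau$ of part (3) of Proposition \ref{prop:limits-colimits}, together with the colimit presentation of $X_\tau$, imply that every point of $X_\tau$ lies in the image of some $\bA_{J_E}$; a $k$-rational point of $\bA_{J_E}$ is by definition an element of $J_E$. I do not anticipate a real obstacle here: the entire proposition is essentially a dictionary translation of Proposition \ref{prop:limits-colimits} into the group-theoretic setting, and the only substantive check is the compatibility of $\{J_E\} \hookrightarrow J_\tau$ with the inclusions in $\cE(\tau)$, which is immediate from the description of $J_E$ in terms of the basis $\{g - e\}$.
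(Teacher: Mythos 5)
Your proposal is correct and follows essentially the same route as the paper: the paper likewise proves this by applying Proposition \ref{prop:limits-colimits} to the poset of subspaces $\{J_E\}_{E\in\cE(\tau)}$, with the grading coming from the graded restriction maps $S^\bu(J_{E}^*)\to S^\bu(J_{E'}^*)$ and the point description read off from the colimit. The only cosmetic difference is your choice of ambient space $J_\tau$ rather than the span $\varinjlim_E J_E$ used in the paper, which changes nothing.
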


\begin{proof}
We apply Proposition \ref{prop:limits-colimits} with $\cC$ equal to the partially order set of vector spaces
$\{ J_E, E \in \cE_\tau \}$, where each $J_E$ is a subspace of the vector space colimit  $\varinjlim_{E \in \cE(\tau)} J_E$.  
This establishes most statements of the proposition.  The action of $\tau$ on $A_\tau$ arises from the action of $\tau$ on $\cE(\tau)$.
The natural grading on each $S^\bu(J_E^*)$
and the fact that the restriction map $S^\bu(J_E^*) \to S^\bu(J_{E^\prime}^*)$ is a graded map (i.e., preserves gradings)
for any $E^\prime \subset E$ provide a grading on $A_\tau$.

If $E_1,\ldots,E_s$ is
a list of the maximal elementary abelian $p$-subgroups of the finite group $\tau$, then $X_\tau$
is obtained from the disjoint union $\coprod_{j=1}^s \bA_{J_{E_j}}$ by identifying the images 
of $\bA_{J_{E_i \cap E_j}}$ in $\bA_{J_{E_i}}$ and in $\bA_{J_{E_j}}$.   In particular, $A_\tau$ is a 
subalgebra of the product algebra $\prod_{j=1}^s S^\bu(J_{E_j}^*)$ and thus has no non-zero nilpotent elements.

The last statement concerning the representation of a $k$-rational point of $X_\tau$ as an element of some $J_E$
is immediate from the description of $X_\tau$ as a colimit.
\end{proof}

The conjugation action of $\tau$  on $\cE(\tau)$ given by (\ref{eqn:tau-on-E}) determines
 an action of $\tau$ on $X_\tau$ as a $k$-scheme (or, equivalently, on $A_\tau$ as a $k$-algebra).
We proceed to make this explicit.

\begin{defn}
\label{defn:tau-action}
Let $E_1,\ldots,E_s$ be a list of the maximal elementary abelian $p$-subgroups of the finite group $\tau$
and represent an element  $f \in A_\tau \subset \prod_{j=1}^s S^\bu(J_{E_j}^*)$ by an $s$-tuple 
$\{ h_{E_j} \in S^\bu(J_{E_j}^*) \}$.  For $g \in \tau$,  $g \circ f$ is defined on $y \in J_{E_j} \subset X_\tau$ by
$$(g \circ f)(y) \ \equiv \ (g \circ \{ h_{E_j} \} )(y) \ = \ h_{E_{j^\prime}}(c_{g^{-1}}(y)), \quad E_{j^\prime} = (E_j)^{g^{-1}}.$$
\end{defn}

\vskip .1in

By replacing $E \mapsto J_E$ by $E \mapsto J_E/J_E^2$, we verify with only notational changes the following
analogue of Proposition \ref{prop:limits}.

\begin{prop}
\label{prop:EmodE2}
The algebra $B_\tau \ \equiv \ \varprojlim_{E \in \cE(\tau)} S^\bu((J_E/J_E^2)^*)$ is a finitely generated commutative
$k$-algebra with a natural grading induced by the grading on each symmetric algebra $S^\bu((J_E/J_E^2)^*)$.  
Moreover, the affine $k$-scheme
$$Y_\tau\ \equiv  \ \Spec B_\tau \ = \ \Spec  \varprojlim_{E \in \cE(\tau)} S^\bu((J_E/J_E^2)^*)$$
equals the colimit in the category of schemes given by
$$Y_\tau \simeq \ \varinjlim_{E \in \cE(\tau)} \bA_{J_E/J_E^2}, \quad \bA_{J_E/J_E^2} \equiv \Spec S^\bu((J_E/J_E^2)^*).$$

Furthermore, $Y_\tau$ is reduced.  The projection $J_E \twoheadrightarrow J_E/J_E^2$ determines the
$\tau$-equivariant map 
$$p_\tau: X_\tau \ \to \ Y_\tau.$$
\end{prop}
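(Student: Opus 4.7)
The plan is to reduce this statement, by a direct transcription of the proof of Proposition \ref{prop:limits}, to Proposition \ref{prop:limits-colimits}. Concretely, I would apply the latter to the category $\cC$ whose objects are the $k$-vector spaces $\{J_E/J_E^2 : E \in \cE(\tau)\}$ and whose morphisms are the canonical maps induced by inclusions $E' \subset E$ of elementary abelian $p$-subgroups. Granted the hypotheses of Proposition \ref{prop:limits-colimits} hold, the finite generation of $B_\tau$, the grading inherited from the symmetric algebra gradings, the colimit identification $Y_\tau \simeq \varinjlim_{\cE(\tau)} \bA_{J_E/J_E^2}$, and reducedness (from the embedding $B_\tau \hookrightarrow \prod_E S^\bu((J_E/J_E^2)^*)$, a product of domains) all follow exactly as in the proof of Proposition \ref{prop:limits}.

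The one piece that actually requires checking is the hypothesis that the system $\{J_E/J_E^2\}$ is realized as subspaces of a common finite-dimensional vector space $V$ compatibly with the transition maps; equivalently, injectivity of $J_{E'}/J_{E'}^2 \to J_E/J_E^2$ for each inclusion $E' \subset E$. I would verify this by choosing a generating set $g_1,\ldots,g_{r'}$ of $E'$ and extending it to a generating set $g_1,\ldots,g_r$ of $E$ (possible since any subgroup of an elementary abelian $p$-group is a direct summand); setting $x_i = g_i - e$ then yields the presentation $kE \simeq k[x_1,\ldots,x_r]/(x_1^p,\ldots,x_r^p)$, under which $\ol x_1,\ldots,\ol x_r$ form a basis of $J_E/J_E^2$ while the image of $J_{E'}/J_{E'}^2$ is the $k$-span of $\ol x_1,\ldots,\ol x_{r'}$. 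Since dimensions agree, the map is injective, and the compatibility for chains $E'' \subset E' \subset E$ is immediate from the functoriality of $E \mapsto J_E/J_E^2$. With injectivity in hand I can take $V = \varinjlim_{\cE(\tau)} J_E/J_E^2$ as the common ambient space.

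Next I would transcribe Definition \ref{defn:tau-action} verbatim with $J_{E_j}$ replaced by $J_{E_j}/J_{E_j}^2$: conjugation $c_x: E \to E^x$ induces a $k$-linear isomorphism $J_E/J_E^2 \to J_{E^x}/J_{E^x}^2$, and the formula of Definition \ref{defn:tau-action} then produces an action of $\tau$ on $B_\tau$ and hence on $Y_\tau$.

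Finally, to produce the $\tau$-equivariant map $p_\tau$, I would dualize the surjection $J_E \twoheadrightarrow J_E/J_E^2$ to obtain an injection $(J_E/J_E^2)^* \hookrightarrow J_E^*$, pass to symmetric algebras to get graded injections $S^\bu((J_E/J_E^2)^*) \hookrightarrow S^\bu(J_E^*)$ compatible with restrictions along inclusions $E' \subset E$, and take the inverse limit to obtain a graded $k$-algebra map $B_\tau \to A_\tau$. Applying $\Spec$ gives $p_\tau$, and its $\tau$-equivariance is automatic because the quotient maps $J_E \twoheadrightarrow J_E/J_E^2$ are natural in $E$ with respect to conjugation. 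The main obstacle, if any, is the injectivity check of the second paragraph; everything else is bookkeeping formally identical to the proof of Proposition \ref{prop:limits}.
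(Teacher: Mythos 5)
Your proposal is correct and takes essentially the same route as the paper, which disposes of this proposition by noting that the proof of Proposition \ref{prop:limits} (i.e.\ the application of Proposition \ref{prop:limits-colimits}) goes through with $J_E$ replaced by $J_E/J_E^2$ throughout. Your explicit check that $J_{E'}/J_{E'}^2 \to J_E/J_E^2$ is injective for $E' \subset E$ (equivalently, $J_{E'}^2 = J_{E'} \cap J_E^2$) supplies the one hypothesis of Proposition \ref{prop:limits-colimits} that the paper leaves implicit here, recording it only later in the remarks preceding Proposition \ref{prop:Utau}.
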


In considering vector bundles, it is more informative to consider the projectivizations $\bP X_\tau, \ \bP Y_\tau$
of $X_\tau, \ Y_\tau$ as 
constructed in the following proposition.   These are related by maps 
$$\bP X_\tau \ \hookleftarrow \bP X_\tau \backslash \bP X_\tau^{(2)}\ \to \ \bP Y_\tau$$
where 
\begin{equation}
\label{eqn:A2tau} 
A_\tau^{(2)} \ \equiv \ \varprojlim_{E\in \cE(\tau)} S^\bu((J_E^2)^*), \quad 
X_\tau^{(2)} \ \equiv \ \Spec A_\tau^{(2)}.
\end{equation}

\begin{prop}
\label{prop:projlimits}
Set \ $\bP X_\tau \ \equiv \ \Proj(A_\tau)$ \ and set \ $\bP Y_\tau \ \equiv \ \Proj(B_\tau)$.
These are colimits in the category of schemes
$$\bP X_\tau \ \simeq \ \varinjlim_{E \in \cE(\tau)} \bP X_E, 
\quad \bP Y_\tau \ \simeq \ \varinjlim_{E \in \cE(\tau)} \bP Y_E$$
equipped with an action of $\tau$ as in Defintion \ref{defn:tau-action}.
\end{prop}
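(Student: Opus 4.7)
The plan is to pass from the affine colimit presentations of Propositions \ref{prop:limits} and \ref{prop:EmodE2} to the projective setting by exploiting the graded structure of $A_\tau$ and $B_\tau$. The key observation is that $A_\tau$ is a non-negatively graded $k$-algebra with $A_\tau^0 = k$, and that each canonical homomorphism $A_\tau \twoheadrightarrow S^\bu(J_E^*)$ is a graded surjection. Applying $\Proj$ will therefore produce a compatible system of closed immersions $\bP X_E \hookrightarrow \bP X_\tau$ set-theoretically covering $\bP X_\tau$, hence a canonical morphism $\varinjlim_{E \in \cE(\tau)} \bP X_E \to \bP X_\tau$ to be shown an isomorphism.

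To prove this morphism is an isomorphism, I would work with the $\bG_m$-quotient presentation $\Proj A \simeq (\Spec A \setminus V(A_+))/\bG_m$. The closed immersions $\bA_{J_E} \hookrightarrow X_\tau$ supplied by Proposition \ref{prop:limits-colimits}(3) are $\bG_m$-equivariant for the scaling action and identify the origins $0 \in \bA_{J_E}$ with a single point of $X_\tau$ (the common zero of all elements of $A_\tau^+$). Consequently, the iterative Cartesian-pushout presentation of $X_\tau$ established in the proof of Proposition \ref{prop:limits-colimits} consists of $\bG_m$-equivariant morphisms preserving the common origin. Removing that origin in each corner and taking $\bG_m$-quotients will transform this presentation into an iterative pushout presentation of $\bP X_\tau$, and induction on the number of maximal elementary abelian $p$-subgroups of $\tau$ (exactly as in the proof of Proposition \ref{prop:limits-colimits}) will yield $\bP X_\tau \simeq \varinjlim_{E \in \cE(\tau)} \bP X_E$. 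The entirely parallel argument, with $J_E$ replaced by $J_E/J_E^2$ and Proposition \ref{prop:limits} replaced by Proposition \ref{prop:EmodE2}, will give $\bP Y_\tau \simeq \varinjlim_{E \in \cE(\tau)} \bP Y_E$.

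For the $\tau$-action, I would observe that the conjugation action of Definition \ref{defn:tau-action} preserves the grading of $A_\tau$ (and of $B_\tau$), because each conjugation $c_x : kE \to kE^x$ carries $J_E$ isomorphically onto $J_{E^x}$ as a graded piece. The action therefore descends through $\Proj$ to $\bP X_\tau$ and $\bP Y_\tau$, and its compatibility with the action assembled from the pieces $\bP X_E$ and $\bP Y_E$ is immediate from the functoriality of $\Proj$ applied to the explicit formula in Definition \ref{defn:tau-action}.

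The main obstacle I foresee is making rigorous the ``graded Ferrand'' step: a Cartesian square of non-negatively graded $k$-algebras whose vertical map is a graded surjection must give rise to a cocartesian square of Projs. Since $\Proj$ is not a left adjoint, Theorem \ref{thm:ferrand} does not formally transport, and one must argue directly via the $\bG_m$-quotient description, verifying that the irrelevant loci of the four corners are excised compatibly and that the resulting diagram of $\bG_m$-quotients is a pushout of schemes. Once this single geometric lemma is in place, the rest of the proof is a bookkeeping induction on $\cE(\tau)$.
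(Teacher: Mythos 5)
Your proposal is essentially correct, but it takes a genuinely different route from the paper. The paper never confronts your ``graded Ferrand'' lemma globally: instead, for each point $z$ of $\varinjlim_E \bP X_E$ it constructs a degree-one element $F_z \in A_\tau$ that is nonzero at $z$ and restricts to zero in $S^\bu(J_E^*)$ exactly for those $E$ not supporting $z$, and then identifies the affine chart $D_+(F_z) = \Spec (A_\tau[1/F_z])_0$ with $\Spec \varprojlim_{E_0 \subset E}(S^\bu(J_E^*)[1/F_z])_0$. On each such chart the affine colimit statement (Proposition \ref{prop:limits-colimits} via Theorem \ref{thm:ferrand}) applies verbatim, and the universal property of the colimit is then checked chart by chart, since a map out of $\bP X_\tau$ (or a compatible family out of the $\bP X_E$) is determined by its restrictions to the $U_z$. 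Your route — realizing $\bP X_\tau$ as the $\bG_m$-quotient of $X_\tau \setminus V(A_\tau^+)$ and descending the iterated pushout presentation through the quotient — is viable and arguably more conceptual, but the lemma you flag as the main obstacle is real work: you need (i) that $A_\tau$ is generated in degree $1$ (true, since $A_\tau$ embeds in a product of symmetric algebras and the degree-one part generates; the paper uses this when invoking Serre's theorem), so that $X_\tau \setminus \{0\} \to \bP X_\tau$ is a Zariski-locally trivial $\bG_m$-torsor and hence an effective epimorphism, (ii) that Ferrand pushouts restrict to open subschemes (to excise the common origin), and (iii) that the quotient of the pushout is the pushout of the quotients, which you would most naturally verify on the affine charts $D_+(f)$ — at which point you have essentially reconstructed the paper's argument. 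What your approach buys is that the universal property follows formally once the torsor/quotient formalism is set up; what the paper's approach buys is that it only ever invokes Ferrand for honest affine schemes, at the cost of the somewhat fiddly construction of the linear forms $F_z$. Your treatment of the $\tau$-action is the same as the paper's (conjugation preserves the grading, hence passes through $\Proj$), and your reduction of $\bP Y_\tau$ to notational changes matches the paper exactly.
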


\begin{proof}
We consider $\bP X_\tau$; the proof of the assertion for $\bP Y_\tau$ requires only notational changes.    
Let $0 \not= z \in \varinjlim_{E \in \cE(\tau)} (\bP X_E)(k)$ and let $E_0$ be the smallest element in $\cE_\tau$
such that $z \in (\bP X_{E_0})(k)$.   Let $F_{E_0} \in J_{E_0}^* \subset S^\bu(J_{E_0}^*)$ be some
homogeneous polynomial of degree 1 satisfying  the
conditions that $F_{E_0}(z) \not= 0$ whereas $(F_{E_0}^*)_{|J_{E^\prime}} = 0$ for all proper subgroups $E^\prime
\subset E_0$.    For example, we could take $F_z$ equal to the product indexed by $E^\prime \subset E_0$ each of whose
factors is a choice of linear form with zero locus $\bA_{E^\prime}$ in $\bA_{E_0}$.
We define $F_z \in A_\tau$ to be the
element (homogeneous, of degree 1) whose image in $S^\bu(J_E^*)$ is given by the image of 
$F_{E_0} $ under the composition 
$J_{E_0}^*\ \to \ (J_{E \cap E_0})^* \ \to \ J_E^*$, where the first map is induced by the homomorphism
$E\cap E_0 \to E_0$ and the second map is the dual of the map which sends the basis element $(g-e_0)$ 
of $J_E$ to $0$ if $g\notin E_0 \cap E$ and to the same named element of $J_{E_0} \cap J_E$ if 
$g \in E_0 \cap E$.  Thus, $F_z \in A_\tau \equiv \varprojlim_{E \in \cE(\tau)} S^\bu(J_E^*)$ maps to 
0 in $S^\bu(J_E^*)$ if and only if $z \notin E$.

Observe that $A_\tau[1/F_z]$ can be identified with $(\varprojlim_{E_0 \subset E} S^\bu(J_E^*)[1/F_z])$
and that Ferrand's Theorem implies that 
$$ U_z \ \equiv \ \Spec (A_\tau[1/F_z])_0 \ =\ \Spec \varprojlim_{E_0 \subset E} (S^\bu(J_E^*)[1/F_z])_0$$ 
is the scheme-theoretic colimit of affine schemes and closed immersions.   We define the sheaf of 
regular functions on $\varinjlim_{E \in \cE(\tau)} \Proj(S^\bu((J_E^j)^*))$
by using the evident identification of the images of  $(A_\tau[1/F_z])_0$ and$ (A_\tau[1/F_{z^\prime}])_0$ in 
$A_\tau[1/F_zF_{z^\prime}])_0$.  So defined, $\varinjlim_{E \in \cE(\tau)} \Proj(S^\bu((J_E)^*))$ is isomorphic
as a local ringed space to $\Proj(A_\tau)$.

To show that $\bP X_\tau \equiv \Proj (A_\tau)$ equipped with the natural maps 
$\bP X_E \to \bP X_\tau, \ E \in \cE(\tau)$ is the colimit
in the category of schemes, we consider a compatible family of maps $\bP X_E \to Z$ for varying $E \in \cE(\tau)$
with $Z$ an arbitrary $k$-scheme.  We must show that this data uniquely  determines a map $\bP X_\tau \to Z$.
Such a map is determined by its restrictions to each open subset $U_z \subset 
\bP X_\tau$; similarly, the restrictions to the pre-images of
 $U_z$ of a compatible family determine the family.  Thus, Proposition \ref{prop:limits} with $A_\tau$ replaced
 by $A_\tau[1/F_z]$ for all $z \in \bP X_\tau$ implies that $\bP X_\tau$ is the scheme-theoretic colimit 
$ \varinjlim_{E \in \cE(\tau)} (\bP X_E)$.
\end{proof}

In the following proposition we shall consider 
\begin{equation}
\label{eqn:bUtau} 
\bU_\tau \ \equiv  \bP X_\tau \backslash \bP X_\tau^{(2)}.
\end{equation}
For the definition of $\bU_\tau$, we observe
that $E^\prime \subset E$ implies that $J_{E^\prime}^2 = J_{E^\prime} \cap J_E^2$ and thus induces a 
closed immersion $\bU_{E^\prime} \to \bU_E$.

\begin{prop}
\label{prop:Utau}

The quasi-projective variety $\bU_\tau$ is the scheme-theoretic colimit $\varinjlim_{E \in \cE(\tau)} \{ \bU_E \}$.
The map $p_\tau: X_\tau \ \to \ Y_\tau$ induces a $\tau$-equivariant map of schemes
\ $p_\tau: \bU_\tau \ \to \  \bP Y_\tau.$

For each elementary abelian $p$-subgroup $E \subset \tau$, a choice of splitting of the map $J_E \to J_E/J_E^2$
determines a splitting $s_E: \bP Y_E \to \bU_E$ which composes with the inclusion $\bU_E \to \bU_\tau$
to induce a map $s_E: \bP Y_E\ \to \ \bU_\tau$ whose composition with $p_\tau$
is the canonical inclusion $\bP Y_E \to \bP Y_\tau$:
\begin{equation}
\label{diag:sec}
\begin{xy}*!C\xybox{%
\xymatrix{
&  \bU_\tau \ar[d]^{p_\tau} \\
\bP Y_E \ar[ur]^{s_E} \ar[r]_{\hookrightarrow} & \bP Y_\tau.
 }
}\end{xy}
\end{equation}
Consequently, $p_\tau$ is surjective.
\end{prop}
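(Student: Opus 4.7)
The plan is to first establish the chart-level computation $\bP X_\tau^{(2)} = V(B_\tau^+ A_\tau) \subset \bP X_\tau$, use this to prove (1) and (2) simultaneously by covering $\bU_\tau$ with suitable affine opens, and then deduce (3) and (4) by dualizing splittings and invoking naturality. For the key computation, I would dualize the quotient $J_E \twoheadrightarrow J_E/J_E^2$ to an inclusion $(J_E/J_E^2)^* \hookrightarrow J_E^*$ whose image in $S^\bu(J_E^*)$ cuts out $J_E^2 \subset J_E$, since a linear form on $J_E$ vanishes on $J_E^2$ if and only if it factors through $J_E/J_E^2$. Patching over $E \in \cE(\tau)$ via the colimit description of Proposition \ref{prop:projlimits} identifies $\bP X_\tau^{(2)}$ with $V(B_\tau^+ A_\tau) \subset \Proj A_\tau$, and the standard Proj construction applied to the graded inclusion $B_\tau \hookrightarrow A_\tau$ yields the morphism $p_\tau: \bU_\tau \to \bP Y_\tau$ asserted in (2); its $\tau$-equivariance descends from the equivariance of $p_\tau: X_\tau \to Y_\tau$ in Proposition \ref{prop:EmodE2}.

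For (1), given any $z \in \bU_\tau$, I would apply the analogue of Proposition \ref{prop:projlimits} for $\Proj B_\tau$ to the image $\bar z \equiv p_\tau(z)$ to produce a degree-one element $F \in B_\tau$ with $F(\bar z) \neq 0$, so $F(z) \neq 0$ as well. The affine open $U_z \equiv \Spec(A_\tau[1/F])_0$ then lies inside $\bU_\tau$ because $F \in B_\tau^+ A_\tau$, and Proposition \ref{prop:limits-colimits} identifies $U_z$ with $\varinjlim_E \Spec(S^\bu(J_E^*)[1/F])_0 = \varinjlim_E (U_z \cap \bU_E)$. Patching these affine charts as in the proof of Proposition \ref{prop:projlimits} yields the scheme-theoretic colimit description of $\bU_\tau$.

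For (3) and (4), a splitting $\sigma_E: J_E/J_E^2 \to J_E$ dualizes to a graded retraction $S^\bu(J_E^*) \twoheadrightarrow S^\bu((J_E/J_E^2)^*)$, giving on spectra a linear section $\bA_{J_E/J_E^2} \to \bA_{J_E}$ of the projection. Since $\sigma_E$ sends non-zero vectors to vectors outside $J_E^2$, this section descends to $s_E: \bP Y_E \to \bU_E$ satisfying $p_\tau \circ s_E = \id$ on $\bP Y_E$; composing with $\bU_E \hookrightarrow \bU_\tau$ produces the desired $s_E: \bP Y_E \to \bU_\tau$, and diagram (\ref{diag:sec}) commutes by naturality of $p_\tau$ on the colimit. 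Surjectivity follows because $\bP Y_\tau \simeq \varinjlim_E \bP Y_E$ by the $Y$-analogue of Proposition \ref{prop:projlimits}, so every point of $\bP Y_\tau$ lies in some $\bP Y_E$ and hence in the image of $p_\tau \circ s_E$.

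The main obstacle I anticipate is the interlocking of (1) and (2): the affine-cover argument for (1) requires first identifying $\bP X_\tau^{(2)} = V(B_\tau^+ A_\tau)$ so that the affine charts cut out by elements of $B_\tau$ automatically lie inside $\bU_\tau$. Once this chart-level computation is in place, the colimit formalism and the splitting-dualization argument take care of the remaining assertions in a fairly routine manner.
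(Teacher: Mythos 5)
Your proposal is correct and follows essentially the same route as the paper: the paper defines $p_E:\bU_E\to\bP Y_E$ as projection off the linear subspace $\bP X_E^{(2)}$ coming from a splitting $J_E=J_E^2\oplus J_E/J_E^2$ and takes $s_E$ to be the inclusion of $\bP Y_E$ into the join $\bP X_E^{(2)}\,\#\,\bP Y_E$, which is exactly the $\Proj$ of your graded inclusion $B_\tau\hookrightarrow A_\tau$ and dualized retraction. The only presentational difference is that you assemble $p_\tau$ globally from $B_\tau\subset A_\tau$ together with the identification $\bP X_\tau^{(2)}=V(B_\tau^{+}A_\tau)$ and spell out the affine cover by $D_+(F)$, $F\in(B_\tau)_1$, whereas the paper handles (1) by the one-line observation that both sides represent the same open subfunctor of $\bP X_\tau$ and builds $p_\tau$ as a colimit of the maps $p_E$.
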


\begin{proof}
One readily verifies that both $\bU_\tau$ and $\varinjlim_{E \in \cE(\tau)} \{ \bU_E \}$ represent the same
open subfunctor of $\bP X_\tau$.  

We define the projection $p_E: \bU_E \to \bP Y_E$ to be the projection off the linear subspace
$\bP X_E^{(2)} \subset \bP X_E$.   This is natural with respect to inclusions $E^\prime \to E$.  Thus, we 
obtain the quotient map  $p_\tau: \bU_\tau \ \to \  \bP Y_\tau$ once we identify each $\bP Y_E$ as 
the quotient space of the projection $p_E$.

Writing $J_E = J_E^2 \oplus J_E/J_E^2$ (as $k$-vector spaces) enables us to view 
$$\bP X_E \ = \ \Proj S^\bu((J_E^*) \ \simeq \ \Proj S^\bu((J_E^2)^*) \# \Proj S^\bu((J_E/J_E^2)^*) \ 
= \ \bP X_E^{(2)} \# \bP Y_E, $$
the union of projective lines in $\bP X_E$ from a point of $\bP X_E^{(2)}$ to a 
point of $\bP Y_E$.    The projection $p_E$ sends a point on such a line $\ell $ other than the 
point $\ell \cap \bP X_E^{(2)}$ to the point $\ell \cap \bP Y_E$.  Observe that the intersection of the linear 
subspaces  $\bP X_E^{(2)}$ and  $\bP Y_E$ in $\bP X_E^{(2)} \# \bP Y_E$
is empty.   We define $s_E: \bP Y_E\to \bU_E$
as the inclusion  of $\bP Y_E$ in $\bP X_E^{(2)} \# \bP Y_E$.
\end{proof}

\begin{remark}
\label{remark:iterated}
The proof of Proposition \ref{prop:limits} supplemented by the proof of Proposition \ref{prop:projlimits} shows 
that the colimit of schemes $\bP X_\tau \simeq \varinjlim_{\cE(\tau)} \bP X_E$ in Proposition \ref{prop:projlimits} 
is constructed as iterated 
push-outs of squares of the form 
\begin{equation}
\label{diag:push-outs}
\begin{xy}*!C\xybox{%
\xymatrix{
\bP X_{E^\prime} \ar[d] \ar[r]  & W  \ar[d] \\
\bP X_E  \ar[r] & Z 
 }
}\end{xy}
\end{equation}
where $\bP X_{E^\prime} \to  \bP X_E$ is a closed linear embedding of projectives spaces and 
$\bP X_{E^\prime} \to W$ is a closed immersion (see (\ref{diag:cocart1})).

The colimit $\bP Y_\tau \simeq  \varinjlim_{\cE(\tau)} \bP Y_E$ is constructed as iterated push-out squares of 
the same form with $X_E$ replaced by $Y_E$, and the colimit $\bU_\tau \simeq \varinjlim_{\cE(\tau)} \bU_E$
as iterated pushout squares with $\bP X_{E^\prime} \to \bP X_E $ replaced by the closed immersion
$\bP X_{E^\prime} \backslash  \bP X_{E^\prime}^{(2)} \ \to \bP X_E  \backslash  \bP X_{E}^{(2)}.$
The latter map is closed since the image of $\bP X_{E^\prime} \backslash  \bP X_{E^\prime}^{(2)} $ in 
$\bP X_E  \backslash  \bP X_{E}^{(2)}$ is the intersection of $\bP X_{E^\prime}^{(2)}$ and 
$\bP X_E  \backslash  \bP X_{E}^{(2)}$ inside $\bP X_E$ because $J_{E^\prime}^2$ is the intersection
of $J_{E^\prime}$ and $J_E^2$ inside $J_E$.
\end{remark}

\begin{cor}
\label{cor:surj}
For each elementary abelian $p$-subgroup $E \subset \tau$, a choice of splitting of the map $J_E \to J_E/J_E^2$
determines a splitting $s_E: \bP Y_E \to \bU_E$ which composes with the inclusion $\bU_E \to \bU_\tau$
to induce a map $s_E: \bP Y_E\ \to \ \bU_\tau$ whose composition with $p_\tau$
is the canonical inclusion $\bP Y_E \to \bP Y_\tau$:
\begin{equation}
\label{diag:SE}
\begin{xy}*!C\xybox{%
\xymatrix{
&  \bU_\tau \ar[d]^{p_\tau} \\
\bP Y_E \ar[ur]^{s_E} \ar[r]_{\hookrightarrow} & \bP Y_\tau.
 }
}\end{xy}
\end{equation}
Consequently, $p_\tau$ is surjective.
\end{cor}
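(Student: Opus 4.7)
The plan is to observe that this corollary is essentially a restatement of the last portion of Proposition \ref{prop:Utau}, so the proof amounts to assembling the pieces that were already established there and then deducing surjectivity from the colimit description of $\bP Y_\tau$.

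First, I would construct the local section $s_E : \bP Y_E \to \bU_E$. A splitting of $J_E \twoheadrightarrow J_E/J_E^2$ is a $k$-linear section $\sigma : J_E/J_E^2 \to J_E$ whose image is a subspace complementary to $J_E^2$. Dualizing and taking symmetric algebras identifies $\bP X_E$ with the join $\bP X_E^{(2)} \# \bP Y_E$, as used in the proof of Proposition \ref{prop:Utau}. Under this identification, the linear subspace $\bP Y_E \subset \bP X_E$ is disjoint from $\bP X_E^{(2)}$, so the inclusion $\bP Y_E \hookrightarrow \bP X_E$ factors through the open subscheme $\bU_E = \bP X_E \setminus \bP X_E^{(2)}$. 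This gives $s_E : \bP Y_E \to \bU_E$, and by construction $p_E \circ s_E = \id_{\bP Y_E}$ since $p_E$ is the projection off $\bP X_E^{(2)}$.

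Next, I would postcompose with the closed immersion $\bU_E \hookrightarrow \bU_\tau$ (from Proposition \ref{prop:Utau}) to obtain $s_E : \bP Y_E \to \bU_\tau$. The commutativity of diagram (\ref{diag:SE}) is then a naturality statement: the map $p_\tau$ was defined in the proof of Proposition \ref{prop:Utau} by gluing together the projections $p_E$ along inclusions $E' \subset E$, so the restriction of $p_\tau$ to $\bU_E \subset \bU_\tau$ equals $p_E$ followed by the canonical inclusion $\bP Y_E \hookrightarrow \bP Y_\tau$. Therefore $p_\tau \circ s_E$ equals this canonical inclusion, as required.

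Finally, for surjectivity of $p_\tau$, I would invoke the colimit description $\bP Y_\tau \simeq \varinjlim_{E \in \cE(\tau)} \bP Y_E$ from Propositions \ref{prop:EmodE2} and \ref{prop:projlimits}: any point of $\bP Y_\tau$ lies in some $\bP Y_E$, and $s_E$ exhibits a preimage in $\bU_\tau$. The only mild obstacle is book-keeping, namely checking on $k$-rational points suffices (which follows because the morphism $p_\tau$ is of finite type between reduced schemes of finite type over $k$, and a constructible image that contains all closed points is all of the target). Since all the nontrivial geometric content was already packaged into the join decomposition of $\bP X_E$ in Proposition \ref{prop:Utau}, no new calculation is needed.
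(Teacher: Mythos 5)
Your proposal is correct and follows essentially the same route as the paper: the paper's proof simply defines $s_E$ in homogeneous coordinates via the join decomposition $\bP X_E \simeq \bP X_E^{(2)} \# \bP Y_E$ (sending $\langle y_0,\ldots,y_N\rangle$ to $\langle 0,\ldots,0,y_0,\ldots,y_N\rangle$), exactly the construction you describe, with the compatibility with $p_\tau$ and the surjectivity left to the already-established content of Proposition \ref{prop:Utau} and the colimit description of $\bP Y_\tau$. Your extra remark about reducing to $k$-rational points is unnecessary (every scheme-theoretic point of the colimit already lies in the image of some $\bP Y_E$ by construction), but it does no harm.
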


\begin{proof}
Given the splitting $J_E = J_E^2 \oplus J_E/J_E^2$ (as $k$-vector spaces), we define
$$s_E: \Proj S^\bu((J_E/J_E^2)^*) \to \ \Proj S^\bu((J_E^*) \ \simeq \ \Proj S^\bu((J_E^2)^*) \# \Proj S^\bu((J_E/J_E^2)^*)$$
by sending $\langle y_0,\ldots,y_N \rangle$ to $\langle 0,\ldots,0,y_0,\ldots,y_N \rangle$.
\end{proof}

We can construct other schemes associated to $\tau$  in a manner strictly analogous to 
the constructions of $X_\tau, \ Y_\tau, \ \bP X_\tau, \ \bP Y_\tau$ and $\bU_\tau$.  We omit the verification
of these analogous constructions as summarized in the next proposition.

\begin{prop}
\label{surj}
We may replace $E \in \cE(\tau) \ \mapsto \ J_E$ in the construction of $X_\tau$ and $\bP X_\tau$
by $E \in \cE(\tau) \ \mapsto \ J_E^j$ for some $j, 1 < j < p$, yielding schemes $X^{(j)}_\tau$ and
$\bP X_\tau^{(j)}$.  In the special case $\tau = E$, for example, $X_E^{(j)} \ = \ \bA_{J_E^j} \ \equiv \ \Spec S^\bu((J_E^j)^*)$.

The quotient maps $S^\bu((J_E^{j})^*) \to S^\bu((J_E^{j+1})^*), \ 1 < j < p-1$ for $E \in \cE(\tau)$ 
induce  closed immersions 
$\bP X^{(j+1)}_\tau \ \subset \ \bP X^{(j)}_\tau $ equivariant under the action
of $\tau$, with open complement 
$$\bU^{(j)}_\tau \ \equiv \ \bP X^{(j)}_\tau \ \backslash \bP X^{(j+1)}_\tau$$ 
mapping naturally to $\Proj (\varprojlim_{E\in \cE(\tau)} S^\bu((J_E^j/J_E^{j+1})^*))$.
\end{prop}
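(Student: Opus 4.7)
The strategy is to mirror, step by step, the constructions of $X_\tau$, $\bP X_\tau$, $\bU_\tau$, and the maps among them with $J_E$ replaced by the ideal power $J_E^j$ for $E \in \cE(\tau)$. The linchpin for applying the machinery of Propositions \ref{prop:limits-colimits}, \ref{prop:limits}, and \ref{prop:projlimits} is the following compatibility: for any inclusion $E^\prime \subset E$ in $\cE(\tau)$, we have $J_{E^\prime}^j = J_{E^\prime} \cap J_E^j$ inside $k\tau$. I would verify this directly by choosing generators $g_1,\dots,g_s$ of $E^\prime$ and extending to generators $g_1,\dots,g_r$ of $E$, and writing $kE = k[x_1,\dots,x_r]/(x_i^p)$ with $x_i = g_i - e$: an element of $J_{E^\prime}$ is a polynomial in $x_1,\dots,x_s$, and being in $J_E^j$ forces each monomial to have total degree $\ge j$ in $x_1,\dots,x_r$, hence $\ge j$ in $x_1,\dots,x_s$, placing it in $J_{E^\prime}^j$.

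With this compatibility in hand, taking $V = J_\tau$ (which contains each $J_E^j$) I would apply Proposition \ref{prop:limits-colimits} to the poset $\cC = \{J_E^j : E \in \cE(\tau)\}$, obtaining
$$A_\tau^{(j)} \ \equiv \ \varprojlim_{E\in\cE(\tau)} S^\bu((J_E^j)^*), \quad X_\tau^{(j)} \ \equiv \ \Spec A_\tau^{(j)} \ \simeq \ \varinjlim_{E \in \cE(\tau)} \bA_{J_E^j}$$
as a reduced affine $k$-scheme. Projectivizing and running the argument of Proposition \ref{prop:projlimits} verbatim (replacing $J_E$ by $J_E^j$ throughout) yields $\bP X_\tau^{(j)} \equiv \Proj A_\tau^{(j)} \simeq \varinjlim_{\cE(\tau)} \bP X_E^{(j)}$ as a scheme-theoretic colimit. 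Since conjugation $c_x$ is a $k$-algebra automorphism of $k\tau$ sending $J_E$ to $J_{E^x}$, it sends $J_E^j$ to $J_{E^x}^j$, so the prescription of Definition \ref{defn:tau-action} equips $X_\tau^{(j)}$ and $\bP X_\tau^{(j)}$ with a $\tau$-action.

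For the closed immersion $\bP X_\tau^{(j+1)} \subset \bP X_\tau^{(j)}$, the inclusion $J_E^{j+1} \hookrightarrow J_E^j$ dualizes to a surjection $S^\bu((J_E^j)^*) \twoheadrightarrow S^\bu((J_E^{j+1})^*)$ and hence a closed immersion $\bP X_E^{(j+1)} \hookrightarrow \bP X_E^{(j)}$ at every $E$. By the iterated push-out description of Remark \ref{remark:iterated}, both $\bP X_\tau^{(j)}$ and $\bP X_\tau^{(j+1)}$ are built from matching sequences of push-out squares of the shape (\ref{diag:push-outs}), governed by Ferrand's Theorem \ref{thm:ferrand}; the key input is precisely the compatibility $J_{E^\prime}^{j+1} = J_{E^\prime} \cap J_E^{j+1}$ verified above, which makes each elementary square a genuine push-out of closed immersions along closed immersions. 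Induction on these push-outs preserves the closed-immersion property, yielding the $\tau$-equivariant closed immersion $\bP X_\tau^{(j+1)} \hookrightarrow \bP X_\tau^{(j)}$. Setting $\bU_\tau^{(j)}$ equal to the open complement and using the quotient $J_E^j \twoheadrightarrow J_E^j/J_E^{j+1}$, the same projection-off-a-linear-subspace construction used in Proposition \ref{prop:Utau} gives a compatible family $\bU_E^{(j)} \to \bP(J_E^j/J_E^{j+1})$, which descends to the colimit and supplies the required map to $\Proj (\varprojlim_{E \in \cE(\tau)} S^\bu((J_E^j/J_E^{j+1})^*))$.

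I expect the only real technical obstacle to be the preservation of the closed-immersion property through the colimit, since colimits of closed immersions in schemes are not generally well-behaved. However, this is precisely the situation Ferrand's theorem and Remark \ref{remark:iterated} are designed to handle; the burden reduces to carefully setting up each elementary push-out square and checking, via the compatibility $J_{E^\prime}^j = J_{E^\prime} \cap J_E^j$, that the relevant surjectivity hypothesis in diagram (\ref{diag:cart}) holds at every stage.
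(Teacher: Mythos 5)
Your proposal is correct and follows exactly the route the paper intends: the paper explicitly omits the verification as ``strictly analogous'' to the constructions of $X_\tau$, $\bP X_\tau$ and $\bU_\tau$, and your argument is that analogy carried out, with the one genuinely new ingredient being the compatibility $J_{E^\prime}^j = J_{E^\prime} \cap J_E^j$ (which the paper only records for $j=2$, in Remark \ref{remark:iterated} and just before Proposition \ref{prop:Utau}) and your monomial-basis verification of it is sound. Your closing caveat is also well placed: the surjectivity hypothesis in Ferrand's square is supplied by restriction of polynomial functions exactly as in Proposition \ref{prop:limits-colimits}(3), while the intersection identity is what makes the gluing happen along the correct closed subschemes and makes $\bU_{E^\prime}^{(j)} \to \bU_E^{(j)}$ a closed immersion.
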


\begin{remark}
\label{remark:filt}
The natural $\tau$-equivariant embeddings \ $\bP X_\tau^{(j)} \ \hookrightarrow \ \bP X_\tau$ \ provide a filtration 
of $\bP X_\tau$ by closed subschemes.

The fibre of $p_\tau: \bU_\tau \to \bP Y_\tau$ above a point $x \in \bP Y_\tau$ can be described as 
follows.  Let $\{ E_1,\ldots,E_s$ \} be the set of elementary abelian subgroups of $\tau$ with the property
that $x\in \bP Y_{E_i} \subset \bP Y_\tau$.  Then $p_\tau^{-1}(x)$ is
the quotient of $ \coprod_{i=1}^s \bA_{J_{E_i}^2}$ by the equivalence relation in which a point of
$\bA_{J_{E_\ell}^2}$ is identified with its image in  $\bA_{J_{E_i}^2}$ whenever $E_\ell \subset E_i$. 
\end{remark}

The following proposition incorporates a fundamental theorem of D. Quillen \cite[7.1]{Q1}.  (The action of
$\tau$ on $\Spec H^*(\tau,k)$ is trivial.)

\begin{prop}
\label{prop:mod-tau}
There are natural (with respect ot $\tau$), $\tau$-equivariant, surjective maps $$X_\tau \ \to \ Y_\tau \ \to \ \Spec H^\bu(\tau,k)$$
equivariant with respect to $\tau$ (where $\tau$ acts trivially on $\Spec H^*(\tau,k)$);
the map $(Y_\tau)/\tau \ \to \ \Spec H^\bu(\tau,k)$ is a $p$-isogeny (denoted $\stackrel{\approx}{\to}$).

These maps induce surjective maps 
$$\bU_\tau   \ \stackrel{p_\tau}{\to} \  \bP Y_\tau \ \to \ (\bP Y_\tau)/\tau \ 
\stackrel{\approx}{\to} \ \Proj H^\bu(\tau,k),$$
contravariantly functorial with respect to inclusions of finite groups.
\end{prop}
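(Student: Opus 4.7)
The plan is to assemble the required maps using the universal property of the colimits of Propositions~\ref{prop:limits} and~\ref{prop:EmodE2}, and then reduce the $p$-isogeny statement to Quillen's stratification theorem \cite[7.1]{Q1}.

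The map $X_\tau \to Y_\tau$ is already given by Proposition~\ref{prop:EmodE2}: on each affine piece $\bA_{J_E}$ it is the linear projection $\bA_{J_E} \twoheadrightarrow \bA_{J_E/J_E^2}$ induced by $J_E \twoheadrightarrow J_E/J_E^2$, from which both surjectivity and $\tau$-equivariance are immediate. To construct $Y_\tau \to \Spec H^\bu(\tau,k)$, I would exhibit a compatible family of maps $\bA_{J_E/J_E^2} \to \Spec H^\bu(\tau,k)$ indexed by $E \in \cE(\tau)$. For each such $E$, the composition
\begin{equation*}
\bA_{J_E/J_E^2} \; \stackrel{\approx}{\longleftarrow} \; \Spec H^\bu(E,k) \; \longrightarrow \; \Spec H^\bu(\tau,k),
\end{equation*}
in which the first arrow is the $p$-isogeny induced by $S^\bu((J_E/J_E^2)^*) \to H^\bu(E,k)$ recalled in the introduction and the second is induced by the restriction $\res_E : H^\bu(\tau,k) \to H^\bu(E,k)$, is well defined on underlying topological spaces (equivalently, on reduced schemes) because the first arrow is a universal homeomorphism. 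Transitivity of restriction gives compatibility with inclusions $E^\prime \subset E$, and triviality of the $\tau$-action on $H^\bu(\tau,k)$ guarantees that this composition factors through $\tau$-conjugation on the source. The universal property of $Y_\tau = \varinjlim_{\cE(\tau)} \bA_{J_E/J_E^2}$ then produces a $\tau$-equivariant map $Y_\tau \to \Spec H^\bu(\tau,k)$, which descends to $(Y_\tau)/\tau \to \Spec H^\bu(\tau,k)$.

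The $p$-isogeny assertion $(Y_\tau)/\tau \stackrel{\approx}{\to} \Spec H^\bu(\tau,k)$ is then precisely the content of Quillen's stratification theorem: Quillen identifies $\Spec H^\bu(\tau,k)$ up to $p$-isogeny with $\bigl(\varinjlim_{\cE(\tau)} \Spec H^\bu(E,k)\bigr)/\tau$, and combining this colimit presentation term-by-term with the $p$-isogenies $\Spec H^\bu(E,k) \stackrel{\approx}{\to} \bA_{J_E/J_E^2}$, via Proposition~\ref{prop:EmodE2}, converts Quillen's colimit into $(Y_\tau)/\tau$. Surjectivity of each of $X_\tau \to Y_\tau \to \Spec H^\bu(\tau,k)$ then follows from surjectivity of $p_\tau$ in Proposition~\ref{prop:EmodE2} and the fact that a $p$-isogeny is surjective.

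For the projective version, the algebras $A_\tau$, $B_\tau$, and $H^\bu(\tau,k)$ are all naturally graded and every map above preserves gradings, so applying $\Proj$ yields the required sequence $\bU_\tau \to \bP Y_\tau \to (\bP Y_\tau)/\tau \stackrel{\approx}{\to} \Proj H^\bu(\tau,k)$, with surjectivity of $p_\tau : \bU_\tau \to \bP Y_\tau$ supplied by Proposition~\ref{prop:Utau}. Functoriality with respect to an inclusion $\tau^\prime \subset \tau$ follows from the induced inclusion of indexing categories $\cE(\tau^\prime) \subset \cE(\tau)$, yielding colimit comparison maps on the $Y$ and $\bU$ side, together with naturality of restriction in group cohomology on the $\Spec H^\bu(?,k)$ side. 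The principal subtle point is matching our colimit presentation of $Y_\tau$ with Quillen's presentation of $\Spec H^\bu(\tau,k)$ compatibly with the $\tau$-conjugation action, but this matching is immediate from the term-by-term $p$-isogenies recalled in the introduction combined with Proposition~\ref{prop:EmodE2}.
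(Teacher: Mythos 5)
Your proposal follows essentially the same route as the paper: the map to $\Spec H^\bu(\tau,k)$ is assembled as a colimit over $\cE(\tau)$ of restriction maps composed with the $p$-isogenies relating $H^\bu(E,k)$ to $S^\bu((J_E/J_E^2)^*)$, the $p$-isogeny after dividing by $\tau$ is Quillen's theorem (the paper additionally spells out the identification of the limit over the conjugation-enriched category $\tilde \cE(\tau)$ with the $\tau$-invariants of the limit over $\cE(\tau)$), and the projective statement follows by gradedness. One small point: rather than ``inverting'' the universal homeomorphism $\Spec H^\bu(E,k) \to \bA_{J_E/J_E^2}$ (which need not be invertible as a map of reduced schemes when $p$ is odd), use the genuine graded algebra map $H^\bu(\tau,k) \to H^\bu(E,k) \to H^\bu(E,k)_{red} \simeq S^\bu((J_E/J_E^2)^*)$ as the paper does, so that $Y_\tau \to \Spec H^\bu(\tau,k)$ is an honest scheme morphism to which $\Proj$ can be applied.
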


\begin{proof}
The map 
$Y_\tau \ \to \ \Spec H^\bu(\tau,k)$ is the colimit with respect to $E \in \cE(\tau)$ of 
maps of schemes induced by the compositions of restriction 
maps $H^*(\tau,k) \to H^*(E,k)$ followed by the $p$-isogenies $H^\bu(E,k) \stackrel{\simeq}{\to} \simeq  
H^\bu(E,k)_{red} \simeq S^\bu((J_E/J_E^2)^*)$.
Let $\tilde \cE(\tau)$ denote the category whose objects are elementary abelian $p$-subgroups of $\tau$ and
whose maps are compositions of inclusions $E^\prime \hookrightarrow E$ with isomorphisms $E \stackrel{\sim}{\to} E^x$
given by conjugation by some $x \in \tau$.  Then Quillen's theorem \cite[7.1]{Q1} tells us that the natural map 
$H^\bu(\tau,k) \to \varprojlim_{\tilde \cE(\tau)} H^\bu(E,k)$ 
is a ``$p$-isogeny" (i.e., has kernel and cokernel whose elements have some $p$-th power 0)
(see \cite[8.10]{Q2}).
We verify by inspection that the natural map  \ $(\varprojlim_{\cE(\tau)} H^\bu(E,k))^\tau \ \to \ 
\varprojlim_{\tilde \cE(\tau)} \Spec H^\bu(E,k)$ is an isomorphism, thereby implying 
the homeomorphism 
\begin{equation}
\label{eqn:quillen}
(Y_\tau)/\tau \ = \ \Spec ((B_\tau)^\tau) \ \stackrel{\approx}{\to} \ \Spec H^\bu(\tau,k).
\end{equation}

To verify the naturality with respect to $\tau$, we observe that a homomorphism $\phi: \sigma \to \tau$ of finite groups
induces a map of partially order sets $\cE(\sigma) \to \cE(\tau)$ and thus a map of commutative $k$-algebras 
$B_\tau \to B_\sigma$:  for each $F \in \cE(\sigma)$, the composition 
$B_\tau \to B_\sigma \to S^*((J_F/J_F^2)^*)$ is given by the projection $A_\tau \to S^*((J_{\phi(F)}/J_{\phi(F)}^2)^*)$ 
followed by the
surjective map $S^*((J_{\phi(F)}/J_{\phi(F)}^2)^*) \to S^*((J_F/J_F^2)^*)$.  This applies equally to determine 
$A_\tau \to A_\sigma$.  A similar argument shows that the embedding $B_\tau \to A_\tau$ determining $X_\tau \to Y_\tau$
is also natural in $\tau$.

The natural maps $H^\bu(\tau,k) \to H^\bu(E,k)$ induced by $E \subset \tau$ in $\cE(\tau)$ are graded, thereby inducing  
 $$\bP Y_\tau \ = \ \varinjlim_{\cE(\tau)} \bP Y_E   \to \ \Proj H^\bu(\tau,k).$$
 This map factors through
$(\bP Y_\tau)/\tau \ \stackrel{\approx}{\to} \ \Proj H^\bu(\tau,k)$, since the action of $\tau$ on $\Proj H^\bu(\tau,k)$
is trivial.    The construction $\tau \mapsto \bP Y_\tau$ is natural with respect to injective group homomorphisms 
$\phi: \sigma \to \tau$ is injective; the injectivity condition iof $\phi$ mplies that $F \to \phi(F)$ is an isomorphism for all $F \in \cE(\sigma)$
so that we may apply $\Proj$ to the graded maps $S^*((J_{\phi(F)}/J_{\phi(F)}^2)^*) \to S^*((J_F/J_F^2)^*)$.
Thus, we obtain embeddings of colimits
$$\varinjlim_{F \in \cE(\sigma)} \bP Y_F \ \to \ \varinjlim_{E \in \cE(\tau)} \bP Y_E, \quad 
\varinjlim_{F \in \cE(\sigma)} \bU_F \ \to \ \varinjlim_{E \in \cE(\tau)} \bU_E.$$
\end{proof}

\vskip .2in


\section{$\tau$-equivariant quasi-coherent sheaves on $X_\tau$ and $\bP X_\tau$}
\label{sec:tau-equiv}

We begin this section with a few generalities about $\tau$-equivariant sheaves on a 
scheme, with $\tau$ an arbitrary discrete group.  Beginning with Definition \ref{defn:Theta-tau},
we return to the context of Section \ref{sec:tau} in which $\tau$ is finite.  In Definition \ref{defn:Theta-tau},
we introduce the operator $\Theta_\tau \in A_\tau \ \otimes\  k\tau$, our analogue of the universal 
$p$-nilpotent operator $\Theta_G \in k[V_r(G)]\otimes kG$ for an infinitesimal group scheme $G$ of
height $\leq r$ (as in \cite{SFB1}).    As defined in Definition \ref{defn:Theta-tauM}, $\Theta_\tau$ determines an 
$A_\tau$-linear map 
$$\Theta_{\tau,M}: A_\tau \otimes M \ \to \ A_\tau \otimes M$$
for any $k\tau$-module $M$;
we verify that $\Theta_{\tau,M}$ is  $p$-nilpotent and $\tau$-equivariant.  Taking kernels, cokernels, and images of 
$\Theta_{\tau,M}^j$ for some $j \ 1 \leq j < p$, 
leads to coherent sheaves on $X_\tau$ (see Theorem \ref{thm:affine-vec}) and on $\bP X_\tau$ 
(see Theorem \ref{thm:projbundle}).  If $M$ has constant $j$-type, then these coherent sheaves associated
to $\Theta_{\tau,M}^j$ are vector bundles (i.e., locally free coherent sheaves).

\begin{defn}
\label{defn:equiv}
Let $X$ be a $k$-scheme and $G$ a group scheme over $k$ with multiplication
$m: G \times G \to G$.  An action of
$G$ on $X$ is the data of an action morphism (over $k$) $\mu: G \times X \to X$ satisfying the 
usual identities required for a group action.  

Let $F$ be a quasi-coherent sheaf on such a $k$-scheme $X$ equipped with a $G$-action over $k$.
The structure of a $G$-equivariant sheaf on $F$ consists of an isomorphism of sheaves on $G\times X$
$$\phi_F: \mu^*F \stackrel{\sim}{\to} p^*F$$ 
where $p: G\times X \to X$ is the projection onto the second factor; $\phi$ is required  to satisfy 
the conditions that its restriction to $\{ e \} \times X$ is the identity on $F$ and that its pull-backs via
$1\times \mu, m \times 1: G\times G \times X \ \to \ G\times X$ are suitably related.  We may
view $\phi_F$ as the data for each ``point" $g$ of the group scheme $G$ an $\cO_X$-linear map
$\phi_{F,g}: \mu_g^*(F) \to F$.

A map $\psi: F \to F^\prime$ of $G$-equivariant, quasi-coherent sheaves on $X$ is a map of quasi-coherent sheaves
which commutes with the $G$ action in the sense that $\phi_{F^\prime} \circ \mu^*(\psi) = p^*(\psi) \circ \phi_F$.
\end{defn}

\vskip .1in
\begin{remark}
A $G$-equivariant, quasi-coherent sheaf on a $k$-scheme $X$ equipped with the action of a group scheme $G$ is
equivalent to a quasi-coherent sheaf on the quotient stack $[X/G]$.
\end{remark}

For a discrete group $\tau$, we can view a $\tau$-equivariant, quasi-coherent sheaf $F$
on $X$ (equipped with an action of $\tau$) as follows.  As a scheme $\tau \times X$ is the disjoint union
$\coprod_{g \in \tau} {}^gX$ where each ${}^gX$ is a copy of $X$.  For each open subset $U \subset X$,
the isomorphism $\phi_F: \mu^*F \stackrel{\sim}{\to} p^*F$ 
restricts to an isomorphism of $\cO_X({}^gU)$-modules $\phi_{g,U}: F({}^gU) \stackrel{\sim}{\to} F(U)$,
where ${}^gU \subset X$ is the open subset of points $x \in X$ such that $gx \in U$.
The conditions of Definition \ref{defn:equiv} are that $\phi_{1,U}$ is the identity and that 
$\phi_{h\cdot g,U} \ = \ \phi_{g,U} \circ  \phi_{h,{}^gU} $.

A map $f: F \to F^\prime$ of $\tau$-equivariant, quasi-coherent sheaves on $X$  is a map of of quasi-coherent 
sheaves determining commutative squares for each open subset $U \subset X$ and each $g \in \tau$:
\begin{equation}
\label{diag:conj}
\begin{xy}*!C\xybox{%
\xymatrix{
F({}^gU) \ar[d]_-{f_{|{}^gU}} \ar[r]^{\phi_{g,U}} & F(U) \ar[d]^-{f_{|U}} \\
F^\prime({}^gU) \ar[r]_{\phi_{g,U}^\prime} & F^\prime(U).
 }
}\end{xy}
\end{equation}
If $\psi: F \to G$ is a map of $\tau$-equivariant, quasi-coherent sheaves on $X$, then
the kernel and cokernel of $\psi$ in the category of quasi-cohoerent sheaves are both $\tau$-equivariant.

\vskip .1in

\begin{ex}
\label{ex:strsheaf}
If $X$ is a scheme equipped with a $\tau$-action, then the structure sheaf $\cO_X$ is naturally $\tau$-equivariant.
For any open $U \subset X$ and any $g \in \tau$, we define $\phi_{g,U}: \cO_X({}^gU) \to \cO_X(U)$ by
sending $f \in \cO_X({}^gU)$ to $g(f) \in \cO_X(U)$, where $g(f)(y) = f(g^{-1}y)$.  

Moreover, if $M$ is a $k\tau$-module, then 
$\cO_X\otimes M$  has the natural structure of a (free) quasi-coherent, $\tau$-equivariant $\cO_X$-module. 
The action of $g \in \tau$ sends the simple tensor $f \otimes m \in (\cO_X\otimes M)({}^gU)$ to $ g(f) \otimes g\cdot m 
\in (\cO_X\otimes M)(U).$  For $h \in \cO_X({}^g(U)$, $h(f\otimes m) = h(f) \otimes m \in (\cO_X\otimes M)({}^gU)$
is sent by the action of $g \in \tau$ to $ g(h(f))\otimes g\cdot m$ which equals the action 
of $\phi_{U,g}(h)$ on $g(f) \otimes g\cdot m$.
\end{ex}

\begin{ex}
\label{ex:tau-affine}
If $X \ = \ \Spec A$ for some finitely generated, commutative $k$-algebra equipped with an action of $\tau$ on $A$
by $k$-algebra automorphisms, then a $\tau$-equivariant, quasi-coherent $\cO_X$-module
is equivalent (by taking global sections on $X$) to an $A$-module $M$ equipped with a group action of $\tau$ 
on $M$ as a $k$-vector space such that \ $g(a\cdot m) \ = \ g(a)\cdot g(m)$.  In other words, $\phi_{g,X}: M \to M$
satisfies the condition that $\phi_{g,X}(a\cdot m) = \phi_{g,X}(a)\cdot \phi_{g,X}(m)$.  A $\tau$-equivariant map
$\psi: M \to M^\prime$ of $\tau$-equivariant $A$-modules is an $A$-module homomorphism satisfying the 
condition that $\psi(\phi_{g,X}(m)) \ = \ \phi_{g,X}(\psi(m))$.

We reformulate such $\tau$-equivariant $A$-modules using the non-commutative $k$-algebra $A \# k\tau$,
the ``smash" or ``semi-direct" product of the Hopf algebra $k\tau$ and the left $k\tau$-module algebra $A$
as in \cite[4.1.2]{M}.  The multiplication in $A \# k\tau$ is given by 
$$(f_1\# x_1)(f_2\# x_2) \ = \ f_1\cdot x_1(f_2) \#x_1x_2, \quad f_1,f_2 \in A, \ x_1,x_2 \in \tau.$$
For any $k\tau$-module $M$, we have an $A \# k\tau$-module structure on $A \otimes M$ 
$$\bu: (A \# k\tau) \otimes (A \otimes M ) \ \to \  A \otimes M,
\quad (f\# x) \bu (h\otimes m) \ = \ f\cdot x(h) \otimes xm.$$
Given this $A\# k\tau$-action on $A\otimes M$, the action of $A$ on $A \otimes M$ is $A$-linear and the action of $\tau$
on $A\otimes M$ is given by $(x\in \tau, \ f\otimes M \in A\otimes M) \ \mapsto \ x(f) \otimes xm$.
\end{ex}

\vskip .1in

In what follows, we shall be especially interested in the special case of Example \ref{ex:tau-affine} in which $A = A_\tau$,
the commutative $k$-algebra introduced in Proposition \ref{prop:limits} for a finite group $\tau$.  We view $A_\tau \otimes M$
as a $\tau$-equivariant $A_\tau$-module as above.  In particular, $A_\tau \otimes k\tau$ is a $\tau$-equivariant
 $A_\tau$-module.

\begin{defn}
\label{defn:Theta-tau}
Consider some $g \in \tau, g^p = e, g \not= e$.  We define 
$(g-e)^\vee \ \in \ \varprojlim_{E \in \cE(\tau)} \ J_E^*$
as follows:  the projection of $(g-e)^\vee$ to $J_E^*$ is the linear function on $J_E$ 
sending the basis element $x-e \in J_E$ to 1 if $x = g$ and sending the basis element $x-e$ 
to 0 if $g \not= x \in E$.

We define
\begin{equation}
\label{eqn:Theta-formula} 
\Theta_\tau \ \equiv \ \sum_{g\in \tau, g^p=e}  (g-e)^\vee \otimes (g-e) \ \in 
\varprojlim_{E \in \cE(\tau)} \ J_E^* \otimes k\tau \ \subset A_\tau \otimes k\tau,
\end{equation}
where $A_\tau \ \equiv \ \varprojlim_{E \in \cE(\tau)} S^\bu(J_E^*)$ is introduced in Proposition \ref{prop:limits}.

Let $\xi: A_\tau \to K$ be a $K$-point of $X_\tau$ lying in $\bA_{J(E)} \subset X_\tau$, thus factoring through an
algebra map $S^\bu(J_E^*) \to K, \ (g-e)^\vee \mapsto a_{g-e}$ for some $p^r-1$-tuple $\{ a_{g-e} \} \in (\bA_{J_E})(K)$ 
(where $r = rank(E)$).   We denote by 
\begin{equation}
\label{eqn:Theta-xi}
\Theta_{\tau,\xi} \ \equiv \  \sum_{g \in E} a_{g-e}\cdot (g-e) \ \in \ KE \ \subset \ K\tau,
\end{equation}
the specialization of $\Theta_\tau$ along $\xi$, and we set
$$\alpha_\xi: K[u]/u^p \ \to \ K\tau, \quad u \mapsto  \Theta_{\tau,\xi}.$$  
\end{defn}

\vskip .1in

\begin{remark}
\label{remark:contrast}
The preceding definition in the special case $\tau = E = \bZ/p^{\times r}$,
$$\Theta_E \ \equiv \ \sum_{g\in E} (g-e)^\vee \otimes (g-e) \ \in \ S^\bu(J_E^*) \otimes kE,$$
does not seem 
consistent with the formulation of $\Theta_{\bG_{a(1)}^{\times r}}$ given in \cite{FP4},
$$\Theta_{\bG_{a(1)}^{\times r}} \ \equiv \ \sum_{i=1}^r X_i \otimes x_i \ \in \
k[V({\bG_{a(1)}^{\times r}} )] \otimes {\bG_{a(1)}^{\times r}}.$$
Let $\{ g_1,\ldots,g_r \}$ be a minimal
set of generators of $E$ and consider the isomorphism
$$k\bG_{a(1)}^{\times r} \ \stackrel{\sim}{\to} \ kE, \quad x_i \mapsto g_i-e$$
where we have identified  $k\bG_{a(1)}^{\times r}$ with $k[x_1,\ldots,x_r]/(x_i^p)$.
Sending $\ol{g_i-e}$ to $g_i-e$ determines the $k$-linear map $s_E: J_E/J_E^2 \to J_E$ 
and thus a map of affine varieties $s_E: \bA_{J_E/J_E^2} \to \bA_{J_E}$.
Restricting $\Theta_E \in S^\bu(J_E^*) \otimes kE$ along $s_E :
S^\bu(J_E^*) \to S^\bu((J_E/J_E^2)^*)$, we obtain the operator corresponding to 
$\Theta_{\bG_{a(1)}^{\times r}}$ under this equivalence of group algebras.
\end{remark}

\vskip .1in

\begin{defn}
\label{defn:Theta-tauM}
For any $k\tau$-module $M$, we define the  $A_\tau$-linear map
$$\Theta_{\tau,M}: A_\tau \otimes M   \ \to \ A_\tau \otimes M  \quad 1 \otimes m  \mapsto  \sum_{g \in \tau, g^p=e} 
(g-e)^\vee \otimes (gm-m).$$.
\end{defn}

\begin{prop}
\label{prop:Theta-tauM}
For any $k\tau$-module $M$, $\Theta_{\tau,M}: A_\tau \otimes M   \ \to \ A \otimes M$ is $\tau$-equivariant; in other
words, $\Theta_{\tau,M}$ is an endomorphism of $A_\tau \otimes M$ as an $A \# k\tau$-module. 

Moreover, the restriction of $\Theta_{\tau,M}$ along some geometric point $\xi: A_\tau \to K$, 
$$\Theta_{\tau,M,\xi}: K\otimes M \ \to  \ K \otimes M, \quad  m \mapsto \sum_{g \in E} a_{g-e} \otimes (g-e)(m),$$
is given by the action of  $\Theta_{\tau,\xi}  \in K\tau$  of (\ref{eqn:Theta-xi}) on the $K\tau$-module $K\otimes M$.
\end{prop}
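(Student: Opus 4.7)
The plan is to dispatch the two assertions separately: the $A_\tau$-linearity of $\Theta_{\tau,M}$ is immediate from the definition, so the real content is (i) the $\tau$-equivariance, and (ii) the identification of the specialization at a geometric point $\xi$ with left multiplication by $\Theta_{\tau,\xi}$. Of these, (i) is the one requiring genuine verification, since it hinges on how conjugation acts on the elements $(g-e)^\vee \in A_\tau$; (ii) will then reduce to a re-indexing observation.

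For the $\tau$-equivariance, I would recall from Example 2.3 and Definition 1.4 that the $\tau$-action on $A_\tau \otimes M$ is given by $h\cdot (f\otimes m) = h(f)\otimes hm$, so what must be proved is the identity
\begin{equation*}
\Theta_{\tau,M}\bigl(1\otimes hm\bigr) \;=\; h\cdot \Theta_{\tau,M}(1\otimes m)
\end{equation*}
for every $h\in\tau$ and $m\in M$. Expanding both sides, the left-hand side is $\sum_{g} (g-e)^\vee \otimes (g-e)(hm)$, while the right-hand side becomes $\sum_{g} h\bigl((g-e)^\vee\bigr) \otimes h(g-e)m = \sum_{g} h\bigl((g-e)^\vee\bigr) \otimes (hgh^{-1}-e)(hm)$. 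Substituting $g' = hgh^{-1}$ on the right, the two sides agree provided one has the naturality identity
\begin{equation*}
h\bigl(((g')^{h^{-1}}-e)^\vee\bigr) \;=\; (g'-e)^\vee
\end{equation*}
in $A_\tau$. This in turn is the key lemma I need, and I expect it to be the main obstacle; I would prove it directly from Definition 1.4 by unwinding the conjugation action on each component $S^\bu(J_E^*)$: the element $(g-e)^\vee$ restricts on $J_E$ to the coordinate function dual to the basis vector $g-e$ (when $g\in E$), so the action of $h$ on that component pulls the basis vector back via $c_{h^{-1}}$, and one simply checks that the resulting linear functional on $J_{E^{h^{-1}}}$ is exactly the coordinate dual to $h^{-1}gh - e$. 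Since the elements $(g-e)^\vee$ generate the relevant submodule and both $\Theta_{\tau,M}$ and the $\tau$-action are linear, this suffices.

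For the specialization statement, once $\xi:A_\tau\to K$ is given, by Proposition 1.6 it factors through the projection $A_\tau \twoheadrightarrow S^\bu(J_E^*)$ for some $E\in\cE(\tau)$, and by construction $\xi((g-e)^\vee)=0$ whenever $g\notin E$. Thus when one restricts $\Theta_{\tau,M}$ along $\xi$, only the terms indexed by $g\in E$ (all of which satisfy $g^p=e$ automatically) survive, and one recovers precisely $m\mapsto \sum_{g\in E} a_{g-e}\otimes (g-e)m$, which by inspection is the action of $\Theta_{\tau,\xi}=\sum_{g\in E} a_{g-e}(g-e)\in KE\subset K\tau$ on $K\otimes M$. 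This part is essentially bookkeeping and should take only a few lines.
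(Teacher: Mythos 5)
Your proposal is correct and follows essentially the same route as the paper: both reduce the equivariance to the identity $h\bigl((g-e)^\vee\bigr)=(g^h-e)^\vee$, verified by evaluating on the basis elements $g'-e$, followed by re-indexing the sum over $g^h$, and both treat the specialization statement as immediate from evaluating $(g-e)^\vee$ along $\xi$. The only cosmetic difference is that you check equivariance on $1\otimes m$ and invoke $A_\tau$-linearity, while the paper carries the factor $f$ through the computation.
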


\begin{proof}
Let $h \in \tau$.  We verify that $h \circ (g-e)^\vee \ = \ (g^h-e)^\vee$ by comparing values on $g^\prime -e$
for all $g^\prime \in \tau$.  Thus, applying $h$ to $(\Theta_{\tau,M})(f \otimes m)$ gives
$$h \circ (\sum (g-e)^\vee f \otimes (gm-m)) \ = \  \sum(g^h-e)^\vee \cdot h(f) \otimes (hg(m)-hm).$$
On the other hand, applying $(\Theta_{\tau,M})$ to $h(f \otimes m) = h(f) \otimes hm $ gives \\
$\sum (g-e)^\vee \cdot h(f) \otimes (gh(m) - hm)$ which equals the previous expression once we re-index
the sum by $g^h$ rather than $g$.

The description of $\Theta_{\tau,M,\xi}$ follows from  the explicit description of $(g-e)^\vee$ and
the fact that extension along
$\xi: A_\tau \to K$ is achieved by evaluating elements of $A_\tau$.
\end{proof}

\begin{remark}
It is reasonable to consider the Jordan type of $\alpha_\xi^*(A_\tau \otimes M) \in K[u]/u^p$ to be the local Jordan
type of the $\tau$-module $M$ at $\xi \in X_\tau$.  As observed in Proposition \ref{prop:Theta-tauM}, 
this is the Jordan type of the endomorphism $\Theta_{\tau,M,\xi}$.  Upon identifying a $k$-rational point of $X_\tau$
with an element $u$ of some $J_E$, we can identify the local Jordan type of $M$ at $u$ with the Jordan
type of $u \in k\tau$ as an endomorphism of the $k\tau$-module  $M$.
\end{remark}
\vskip .1in

The naturality with respect to $\tau$ of $\Theta_{\tau,M}$ is described in the next proposition.

\begin{prop}
\label{prop:tau-natural}
Let $\phi: \sigma \to \tau$ be a homomorphism of finite groups and consider a $k\tau$-module $M$ together
with its restriction to $k\sigma$.  Then  $\Theta_{\sigma,M}^j$ is the extension along $A_\tau \to A_\sigma$
of $\Theta_{\tau,M}^j$.

Consequently, there are natural $\sigma$-equivariant homomorphisms of  $A_\sigma$-modules
$$ker\{ (\Theta_{\tau,M})^j\} \ \to \ ker\{ (\Theta_{\sigma,M})^j\} $$
and similar homomorphisms with kernel replaced by cokernel or image.
\end{prop}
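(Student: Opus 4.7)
The plan is to unwind the explicit definitions. First I would recall from the proof of Proposition~\ref{prop:mod-tau} that the map $A_\tau \to A_\sigma$ induced by $\phi\colon \sigma \to \tau$ is specified by requiring, for each $F \in \cE(\sigma)$, that its composition with the projection $A_\sigma \twoheadrightarrow S^\bu(J_F^*)$ factors as $A_\tau \twoheadrightarrow S^\bu(J_{\phi(F)}^*) \to S^\bu(J_F^*)$, where the second arrow is induced by dualizing the $k$-linear map $J_F \to J_{\phi(F)}$, $h-e \mapsto \phi(h)-e$.

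I would then compute the image of $\Theta_{\tau,M}$ under this extension by projecting to each factor $S^\bu(J_F^*)\otimes M$. The dual map sends a basis element $(g-e)^\vee \in J_{\phi(F)}^*$ (for $g \in \phi(F),\ g \ne e$) to $\sum_{h \in F,\ \phi(h)=g}(h-e)^\vee \in J_F^*$. Consequently the $F$-component of the extension of $\Theta_{\tau,M}(1\otimes m)$ equals
\begin{equation*}
\sum_{g \in \phi(F),\, g \ne e}\Bigl(\sum_{h \in F,\, \phi(h)=g}(h-e)^\vee\Bigr) \otimes (gm-m) \ = \ \sum_{h \in F,\, \phi(h)\ne e}(h-e)^\vee \otimes (hm-m),
\end{equation*}
using that $M$ is viewed as a $k\sigma$-module via $\phi$ so that $hm = \phi(h)m$. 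On the other hand, $\Theta_{\sigma,M}(1\otimes m)$ projects in the $F$-component to $\sum_{h \in F}(h-e)^\vee \otimes (hm-m)$, and the terms indexed by $h \in \ker(\phi)\cap F$ vanish because $hm = \phi(h)m = m$. Since the projections $A_\sigma \hookrightarrow \prod_{F \in \cE(\sigma)}S^\bu(J_F^*)$ are jointly injective (Proposition~\ref{prop:limits}), the two $A_\sigma$-linear endomorphisms of $A_\sigma\otimes M$ agree, and iterating gives $\Theta_{\sigma,M}^j = A_\sigma \otimes_{A_\tau} \Theta_{\tau,M}^j$.

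For the second assertion, this identification yields the required natural maps by standard base change: for $x \in \ker(\Theta_{\tau,M}^j)$ one has $\Theta_{\sigma,M}^j(1 \otimes x) = 1 \otimes \Theta_{\tau,M}^j(x) = 0$, so $x \mapsto 1 \otimes x$ defines a homomorphism $\ker(\Theta_{\tau,M}^j) \to \ker(\Theta_{\sigma,M}^j)$; the image case is analogous, and the cokernel case uses right exactness of $A_\sigma \otimes_{A_\tau}(-)$. The $\sigma$-equivariance is inherited from the $\tau$-equivariance of Proposition~\ref{prop:Theta-tauM}, since the structure map $A_\tau \to A_\sigma$ intertwines the $\sigma$-action on $A_\sigma$ with the $\tau$-action on $A_\tau$ restricted along $\phi$.

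The only real subtlety is the bookkeeping when $\phi$ is not injective: the apparent loss of information in the dual map $(g-e)^\vee \mapsto \sum_{h \in \phi^{-1}(g)\cap F}(h-e)^\vee$ is exactly compensated by the vanishing of $hm - m$ for $h \in \ker(\phi)\cap F$, since in the restricted module each such $h$ acts as $\phi(h) = e$.
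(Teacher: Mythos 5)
Your proposal is correct and follows essentially the same route as the paper: compute the extension of $\Theta_{\tau,M}$ along $\phi^*\colon A_\tau \to A_\sigma$ componentwise on each $S^\bu(J_F^*)$, identify it with $\Theta_{\sigma,M}$, and then obtain the maps on kernels, images, and cokernels by naturality of base change, with $\sigma$-equivariance inherited from the equivariance of $\phi^*$. Your treatment of the non-injective case (the dual map sending $(g-e)^\vee$ to the sum over the fiber $\phi^{-1}(g)\cap F$, compensated by the vanishing of $hm-m$ for $h\in\ker\phi$) is in fact slightly more careful than the paper's, which simply states $\phi^*((g-e)^\vee)=(h-e)^\vee$ for $g=\phi(h)$.
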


\begin{proof}
Let $\phi^*: A_\tau \to A_\sigma$ denote the map of $k$-algebras induced by $\phi$.  For $g \in \tau$ not in 
the image of $\phi$,  $\phi^*((g-e)^\vee) = 0$; on the other hand, if $g = \phi(h)$ with $h^p = e$, then 
$\phi^*((g-e)^\vee) = (h-e)^\vee$.  Thus, if $M$ is a $\tau$-module and $m \in M$, then
$$\Theta_{\sigma,M}(m)  \ = \ \sum_{h\in \sigma,h^p=e} (h-e)^\vee \otimes (h\cdot m-m)$$
is the image of $\Theta_{\tau,M}(m)$ under the base change map $\phi^*\otimes 1: A_\tau \otimes M \to 
A_\sigma \otimes M$ as stated in the first assertion of the proposition for $j = 1$.   For $j > 1$, we 
iterate this argument.

The naturality of $A_\sigma \otimes_{A_\tau} (-)$  determines the commutative diagram whose upper row
(respectively, lower row) is an exact sequence of $\tau$-equivariant $A_\tau$ (resp., $\sigma$ equivariant
$A_\sigma$) modules
\begin{equation}
\label{diag:conjM}
\begin{xy}*!C\xybox{%
\xymatrix{0 \ar[r] & ker\{ (\Theta_{\tau,M})^j\} \ar[r]  & A_\tau \otimes M \ar[r]^{(\Theta_{\tau,M})^j} \ar[d] 
& A\otimes M \ar[d] \\
0 \ar[r] & ker\{ (\Theta_{\sigma,M})^j\} \ar[r]  & A_\sigma \otimes M \ar[r]^{(\Theta_{\sigma,M})^j} & A_\sigma \otimes M
 }
}\end{xy}
\end{equation}
and thus determines a map of  $A_\sigma$-modules $ker\{ (\Theta_{\tau,M})^j\} \ 
\to \ ker\{ (\Theta_{\sigma,M})^j\} $.  The fact that $\phi^*: A_\tau \to A_\sigma$ is 
$\sigma$-equivariant implies that this map is a $\sigma$-equivariant map of $A_\sigma$-modules.

A similar argument applies with kernel replaced by cokernel or image.
\end{proof}

We shall utilize the following useful criterion for a coherent sheaf $\cF$ on a reduced Noetherian scheme $X$ to be 
locally free (i.e., a vector bundle over $X$); this is stated as an exercise in \cite[II.5.ex5.8]{Har} and a 
proof is given in \cite[4.11]{FP4}.

\begin{prop}
\label{prop:locfree}
Let $X$ be a reduced Noetherian scheme and $\cF$ a coherent sheaf on $X$.  For each point $x \in X$,
let $\cO_{X,x}$ denote the stalk at $x$ of the structure sheaf $\cO_X$, let $\cF_{(x)}$ denote stalk at $x$ of $\cF$,
and let $k(x)$ denote the residue field of $\cO_{X,x}$.Then $\cF$ is locally free
if and only if 
$$dim_{k(x)} (\cF_{(x)}\otimes_{\cO_{X,x}} k(x)) \ = \ dim_{k(y)} (\cF_{(y)}\otimes_{\cO_{X,y}} k(y)) $$
whenever $x, \ y$ lie in the same connected component of $X$.
\end{prop}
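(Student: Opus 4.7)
The forward implication is immediate: if $\cF|_U \simeq \cO_U^n$ on an open neighborhood $U$ of $x$, then every $y \in U$ has $\dim_{k(y)} \cF_y \otimes k(y) = n$, so the fiber-dimension function is locally constant and hence constant on each connected component.

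For the reverse implication, I would fix $x \in X$ and set $n = \dim_{k(x)} \cF_x \otimes_{\cO_{X,x}} k(x)$. Lift a $k(x)$-basis of the fiber $\cF_x \otimes k(x)$ to sections $s_1,\dots,s_n$ of $\cF$ on some open neighborhood of $x$. By Nakayama's lemma applied to the finitely generated $\cO_{X,x}$-module $\cF_x$, the sections $s_1,\dots,s_n$ generate $\cF_x$; since $\cF$ is coherent, they generate $\cF|_U$ on some open neighborhood $U$ of $x$, yielding a surjection $\phi \colon \cO_U^n \twoheadrightarrow \cF|_U$. Since $X$ is Noetherian, its connected components are open, so shrinking $U$ I may assume that $U$ is contained in the connected component of $x$ and, by the hypothesis, that $\dim_{k(y)} \cF_y \otimes k(y) = n$ for every $y \in U$.

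Set $\cK = \ker \phi$, a coherent subsheaf of $\cO_U^n$; the task is to show $\cK = 0$. Let $\eta$ be a generic point of any irreducible component of $U$. Since $U$ is reduced and $\eta$ corresponds to a minimal prime, $\cO_{U,\eta} = k(\eta)$ is a field. The exact sequence $0 \to \cK_\eta \to k(\eta)^n \to \cF_\eta \to 0$ is then a sequence of $k(\eta)$-vector spaces, and $\cF_\eta$ has dimension $n$ by the hypothesis applied at $\eta$, so the surjection $k(\eta)^n \to \cF_\eta$ is an isomorphism and $\cK_\eta = 0$. Now the associated points of the coherent subsheaf $\cK \subset \cO_U^n$ are contained in the associated points of $\cO_U$, which for reduced Noetherian $U$ are exactly the generic points of its irreducible components. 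Since $\cK$ vanishes at each such generic point, $\cK$ has no associated points, hence $\cK = 0$. Therefore $\phi$ is an isomorphism on $U$, exhibiting $\cF$ as locally free of rank $n$ on a neighborhood of $x$.

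The main obstacle is the final descent from ``$\cK$ vanishes at every generic point'' to ``$\cK = 0$'', which is exactly where the reducedness hypothesis is used: without it, $\cK$ could carry embedded associated primes and fail to vanish even though all its fibers at minimal primes are zero. Equivalently, one can phrase the step via the injection $\cO_U \hookrightarrow \prod_\eta k(\eta)$, valid precisely when $\cO_U$ has trivial nilradical. Every other step — choosing a minimal generating set, extending it to a neighborhood via coherence, and recognizing a surjection between vector spaces of equal dimension as an isomorphism — is formal.
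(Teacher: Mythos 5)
Your proof is correct and is the standard argument: Nakayama plus coherence to produce a local surjection $\cO_U^n \twoheadrightarrow \cF|_U$, then reducedness to kill the kernel via its vanishing at the generic points. The paper gives no proof of its own for this proposition, deferring instead to Hartshorne's exercise II.5.8 and to \cite[4.11]{FP4}, and your argument is essentially the one found there, including the correct identification of reducedness as the hypothesis that lets one pass from vanishing at minimal primes to vanishing of the kernel sheaf.
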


 Our first application of Proposition \ref{prop:locfree} is the following proof that $\Theta_{\tau,M}$ is $p$-nilpotent.
 
\begin{prop}
\label{prop:pnilp-affine}
For any finite dimensional $k\tau$-module $M$, the endomorphism $\Theta_{\tau,M}: A_\tau \otimes M \to A_\tau\otimes M$ 
of Definition \ref{defn:Theta-tauM} is $p$-nilpotent.
\end{prop}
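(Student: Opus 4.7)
The plan is to prove the vanishing of $\Theta_{\tau,M}^p$ as an $A_\tau$-linear endomorphism of $A_\tau \otimes M$. Since $M$ is finite dimensional, $A_\tau \otimes M$ is free of rank $n = \dim_k M$ over $A_\tau$, and so $\Theta_{\tau,M}^p$ is represented by an $n \times n$ matrix with entries in $A_\tau$. Proving the proposition amounts to showing that each entry of this matrix vanishes as an element of $A_\tau$.

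The crucial input is that $A_\tau$ is reduced, as established in Proposition \ref{prop:limits}; in fact, its proof shows that $A_\tau$ embeds into $\prod_{j=1}^{s} S^\bu(J_{E_j}^*)$, where $E_1,\dots,E_s$ are the maximal elementary abelian $p$-subgroups of $\tau$. Consequently an element of $A_\tau$ vanishes if and only if its image vanishes under every $k$-algebra homomorphism $\xi: A_\tau \to K$ with $K$ an algebraically closed extension of $k$. Applying this criterion entrywise, I reduce the problem to showing that the specialization $\Theta_{\tau,M,\xi}^p$ vanishes for every such geometric point $\xi$ of $X_\tau$.

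By the colimit description of $X_\tau$ in Proposition \ref{prop:limits}, every such $\xi$ factors through some $\bA_{J_E}$ for an elementary abelian $p$-subgroup $E \subset \tau$, and thereby corresponds to an element $u = \sum_{e \neq g \in E} a_{g-e}(g-e) \in J_E \otimes K$. Proposition \ref{prop:Theta-tauM} identifies the specialization $\Theta_{\tau,M,\xi}$ with the action of $u \in KE \subset K\tau$ on $K \otimes M$, so it suffices to check that $u^p = 0$ in $KE$. Since $KE$ is commutative and $K$ has characteristic $p$, the Frobenius $x \mapsto x^p$ is a ring homomorphism on $KE$, giving $u^p = \sum_{g} a_{g-e}^p (g-e)^p$; and each factor $(g-e)^p = g^p - e = 0$, because $g^p = e$ in an elementary abelian $p$-group.

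I anticipate no serious obstacle here: the geometric structure already developed — the colimit description of $X_\tau$, the reducedness of $A_\tau$, and the explicit pointwise formula for $\Theta_{\tau,M,\xi}$ in Proposition \ref{prop:Theta-tauM} — reduces $p$-nilpotency on the nose to the freshman's dream applied inside each commutative $K$-algebra $KE$. The only point that deserves some care is the passage from ``vanishes at every geometric point of $X_\tau$'' to ``vanishes in $A_\tau$,'' which is handled cleanly by the embedding of $A_\tau$ into the product $\prod_j S^\bu(J_{E_j}^*)$ of polynomial rings.
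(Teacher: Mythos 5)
Your proof is correct, and it reaches the conclusion by a more elementary route than the paper does. Both arguments share the same essential inputs: the reducedness of $A_\tau$, the colimit description of $X_\tau$ identifying geometric points with elements of some $J_E\otimes K$, and the key pointwise fact that the specialization $\Theta_{\tau,M,\xi}$ is multiplication by $u=\sum_g a_{g-e}(g-e)\in KE$, whose $p$-th power vanishes by the Frobenius computation $(g-e)^p=g^p-e=0$ in the commutative characteristic-$p$ algebra $KE$ (the paper simply asserts $\Theta_{\tau,x}^p=0$ without writing this out, so your explicit verification is a welcome addition). Where you diverge is in passing from ``vanishes at every geometric point'' to ``vanishes in $A_\tau$'': you apply reducedness directly to the entries of the matrix of $\Theta_{\tau,M}^p$ in a basis of the free module $A_\tau\otimes M$, using that specialization of matrices commutes with matrix multiplication. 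The paper instead runs a coherent-sheaf argument: constancy of the fiber dimension of $\operatorname{coker}\{\Theta_{\tau,M}^p\}$ plus Proposition \ref{prop:locfree} shows the cokernel is locally free, so the sequence $0\to \operatorname{im}\{\Theta_{\tau,M}^p\}\to A_\tau\otimes M\to \operatorname{coker}\{\Theta_{\tau,M}^p\}\to 0$ is locally split, and exactness of specialization at generic points then forces the image to be zero. Your version avoids Proposition \ref{prop:locfree} entirely and is shorter and harder to get wrong; the paper's version has the mild virtue of rehearsing the local-freeness mechanism that is reused in Theorems \ref{thm:affine-vec} and \ref{thm:projbundle}, but nothing in it is needed for this particular statement.
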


\begin{proof}
We consider the short exact sequence
\begin{equation}
\label{eqn:shortexact}
0 \to im\{\Theta_\tau^p \} \to \ A_\tau \otimes M \ \to \ coker\{\Theta_\tau^p \} \to 0
\end{equation}
and its specializations
\begin{equation}
\label{eqn:shortexact-x}
im\{\Theta_{\tau,x}^p \} \to \ k(x) \otimes M \ \to \ coker\{\Theta_{\tau,x}^p \} \to 0
\end{equation}
at points $x \in X_\tau$.  
Since $\Theta_{\tau,x}^p = 0$ for all $x \in X_\tau$,  (\ref{eqn:shortexact-x}) implies that the dimension
of $coker\{\Theta_{\tau,x}^p \}$ as a $k(x)$-vector space equals $dim_k(M)$.  By Proposition \ref{prop:locfree},
$coker\{\Theta_\tau^p \}$ is locally free; thus, (\ref{eqn:shortexact}) is locally split,    On the other hand, specialization
at generic points of $X_\tau$ is exact, so that the vanishing of $\Theta_{\tau,x}^p$ at generic points implies that
the locally split $A_\tau$-submodule $im\{\Theta_{\tau,x}^p \}$ of the free $A_\tau$-module $A_\tau \otimes M$ must be 0.
Thus, $\Theta_{\tau,x}^p = 0.$
\end{proof}

We recall that $K\tau$ is a (left) flat $KE$-module for any $E \in \cE(\tau)$ and that $\alpha_\xi: K[u]/u^p \to KE$
is flat if and only if $\alpha_\xi(u) \in J_E \backslash J_E^2$ which is the case if and only if $\xi$ is a point of 
$X_\tau \backslash X^{(2)}_\tau$

\begin{defn}
\label{defn:j-rank}
A $k\tau$ module $M$ is said to be of  {\bf constant $j$-rank} if the rank of $\alpha^j: M\otimes K \to M\otimes K$ 
is independent of choice of a flat map of $K$-algebras of the form $\alpha: K[u]/u^p \to KE \subset K\tau$ with
$E \in \cE(\tau)$ and field extension $K/k$. 

A $k\tau$ module $M$ is said to be of {\bf constant Jordan type} if the Jordan type $\alpha_K^*M$ as a $K[u]/u^p$-module
is independent of flat map of $K$-algebras of the form $\alpha: K[u]/u^p \to KE \subset K\tau$ with
$E \in \cE(\tau)$ and field extension $K/k$.   A $k\tau$ module $M$ is of constant Jordan type if and only if it
is of constant $j$-rank for each $j, \  1 \leq j < p$.
\end{defn}

\vskip .1in

\begin{thm}
\label{thm:affine-vec}
If $M$ is a finite dimensional $\tau$-module of constant $j$-rank, then the restrictions 
to $X_\tau \backslash X^{(2)}_\tau$ of the $\tau$-equivariant, coherent sheaves on $X_\tau$
$$ker\{ (\Theta_{\tau,M})^j\}, \quad coker\{ (\Theta_{\tau,M})^j) \}, \quad im\{ (\Theta_{\tau,M})^j\} $$
 are $\tau$-equivariant vector bundles.

Consequently, if $M$ is a finite dimensional $\tau$-module of constant Jordan type, then these
coherent sheaves on $X_\tau \backslash X^{(2)}_\tau$ are $\tau$-equivariant vector bundles for any $j, \ 1 \leq j < p$.
\end{thm}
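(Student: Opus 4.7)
The plan is to verify the three sheaves have constant fiber dimension on $X_\tau \backslash X_\tau^{(2)}$ and then invoke Proposition \ref{prop:locfree}. Since $X_\tau$ is reduced and Noetherian (Proposition \ref{prop:limits}), so is its open subscheme $X_\tau \backslash X_\tau^{(2)}$, so the criterion of Proposition \ref{prop:locfree} applies. The $\tau$-equivariant structure on each of the three sheaves is automatic: Proposition \ref{prop:Theta-tauM} shows $\Theta_{\tau,M}$ is a $\tau$-equivariant endomorphism of $A_\tau \otimes M$, so kernels, images and cokernels of its iterates are $\tau$-equivariant quasi-coherent subquotients of $A_\tau \otimes M$, and restriction to an open $\tau$-stable subscheme preserves equivariance.

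First I would handle the cokernel, which is the easiest case. By right-exactness of tensor product, the fiber of $\coker((\Theta_{\tau,M})^j)$ at a $K$-point $\xi$ of $X_\tau$ is canonically $\coker(\Theta_{\tau,M,\xi}^j)$. By Proposition \ref{prop:Theta-tauM}, $\Theta_{\tau,M,\xi}$ is the action of $\Theta_{\tau,\xi} = \alpha_\xi(u)$ on $K \otimes M$, so
$$\dim_{k(\xi)} \coker(\Theta_{\tau,M,\xi}^j) \ = \ \dim_k M \ - \ \rk(\alpha_\xi^j).$$
For $\xi \in X_\tau \backslash X_\tau^{(2)}$ lying in $\bA_{J_E}$, we have $\alpha_\xi(u) \in J_E \backslash J_E^2$, so $\alpha_\xi: K[u]/u^p \to KE$ is flat (as recalled just before Definition \ref{defn:j-rank}). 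The constant $j$-rank hypothesis then tells us $\rk(\alpha_\xi^j)$ is the same for every such $\xi$, so the fiber dimension of $\coker((\Theta_{\tau,M})^j)$ is constant on $X_\tau \backslash X_\tau^{(2)}$. Proposition \ref{prop:locfree} yields local freeness.

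Second, with the cokernel locally free on $X_\tau \backslash X_\tau^{(2)}$, the short exact sequence
$$0 \ \to \ \im((\Theta_{\tau,M})^j) \ \to \ A_\tau \otimes M \ \to \ \coker((\Theta_{\tau,M})^j) \ \to \ 0$$
is locally split over this open subscheme, which forces $\im((\Theta_{\tau,M})^j)$ to be locally free (as a local summand of the free module $A_\tau \otimes M$). Now the short exact sequence
$$0 \ \to \ \ker((\Theta_{\tau,M})^j) \ \to \ A_\tau \otimes M \ \to \ \im((\Theta_{\tau,M})^j) \ \to \ 0$$
is likewise locally split on $X_\tau \backslash X_\tau^{(2)}$, proving local freeness of the kernel. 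The final sentence is then immediate: by Definition \ref{defn:j-rank}, constant Jordan type is exactly constant $j$-rank for all $1 \leq j < p$, so the above applies to each $j$.

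The main obstacle (which is mild) is matching the constant $j$-rank hypothesis, phrased in terms of flat $\alpha: K[u]/u^p \to KE \subset K\tau$, with the fiber-dimension computation at points of $X_\tau \backslash X_\tau^{(2)}$. This reduces to observing that the correspondence $\xi \leftrightarrow \alpha_\xi$ from Definition \ref{defn:Theta-tau} restricts to a bijection between such flat maps (up to choice of $E$) and $K$-points of $X_\tau \backslash X_\tau^{(2)}$, so every flat $\alpha$ arises as $\alpha_\xi$ for some $\xi \in X_\tau \backslash X_\tau^{(2)}$ and conversely. Everything else is a routine application of the locally-free criterion and the splitting argument already used in the proof of Proposition \ref{prop:pnilp-affine}.
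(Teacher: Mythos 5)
Your proposal is correct and follows essentially the same route as the paper: establish equivariance via the abelian category of $\tau$-equivariant sheaves, get one of the three sheaves locally free from the constant $j$-rank hypothesis via Proposition \ref{prop:locfree}, and then propagate local freeness to the other two through the local splittings of the two short exact sequences involving $A_\tau \otimes M$. The only (harmless) difference is the order: the paper starts from the image and deduces kernel then cokernel, whereas you start from the cokernel — which is in fact the slightly cleaner entry point, since right-exactness of the fiber functor makes the cokernel's fiber dimension exactly computable.
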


\begin{proof}
The fact that the kernel, cokernel, and image of $(\Theta_{\tau,M})^j$ are $\tau$-equivariant, coherent sheaves on $X_\tau$
arises from the fact that $\tau$-equivariant, coherent sheaves on $X_\tau$ (i.e., $A_\tau \# k\tau$-modules) form an abelian category and that $(\Theta_{\tau,M})^j$ is $\tau$-equivariant.

To show that $ker\{ (\Theta_{\tau,M})^j\}, \ coker\{ (\Theta_{\tau,M})^j) \}, \ im\{ (\Theta_{\tau,M})^j\} $ restricted to
$X_\tau \backslash X^{(2)}_\tau$ are vector bundles if $M$ has constant $j$-rank, it suffices by 
Proposition \ref{prop:locfree} to show that the fiber $\ker\{ (\Theta_{\tau,M})^j\}\otimes_{A_\tau} k(x)$ 
(respectively, $coker\{ (\Theta_{\tau,M})^j) \otimes_{A_\tau} k(x) \}$; \ resp. 
$ im\{ (\Theta_{\tau,M})^j\} \otimes_{A_\tau} k(x)$) has dimension independent of $x  \in X_\tau \backslash X^{(2)}_\tau$.
By definition of constant $j$-rank, the dimension of $im\{ (\Theta_{\tau,M})^j\} \otimes_{A_\tau} k(x)$ is 
independent of $x \in  X_\tau \backslash X^{(2)}_\tau$.

Consider the short exact sequences of sheaves:
$$0 \to ker\{ (\Theta_{\tau,M})^j\}  \ \to \ \cO_{X_\tau} \otimes M \ \to \ im\{ (\Theta_{\tau,M})^j) \} \to 0$$
$$ 0 \to im\{ (\Theta_{\tau,M})^j\} \ \to \  \cO_{X_\tau} \otimes M\ \to \ coker\{ (\Theta_{\tau,M})^j) \} \to 0.$$
Restricted to some neighborhood of $x\in X_\tau\backslash X^{(2)}_\tau$, $ im\{ (\Theta_{\tau,M})^j\}$ is free
so that the upper sequence splits upon restriction to this neighborhood; thus,  $ker\{ (\Theta_{\tau,M})^j\}$
is also locally free upon restriction to $X_\tau \backslash X_\tau^{(2)}$.  Observe that the kernel and cokernel
of a linear endomorphism of a finite dimensional vector space over a field have the same dimension.  Thus, 
applying Proposition \ref{prop:locfree} once again we conclude that the restriction to $X_\tau \backslash X_\tau^{(2)}$ of 
$coker\{ (\Theta_{\tau,M})^j)$ is also locally free.
\end{proof}

We proceed to formulate the projective version of $\Theta_{\tau}$; as defined, $\bP \Theta_\tau$ is
a $\tau$-invariant global section of  the coherent sheaf $\cO_{\bP X_\tau}(1) \otimes k\tau$, the 
first Serre twist of the free $\cO_{\bP X_\tau}$-module
(of rank equal to the order of $\tau$) on $\bP X_\tau$.  We recall that $A_\tau$ is graded, and that 
the vector space of global sections $\Gamma(\bP X_\tau,\cO_{\bP X_\tau}(n))$ of $\cO_{\bP X_\tau}(n)$
equals $(A_\tau)_n$, the summand of $A_\tau$ of elements homogeneous of degree $n$.

In what follows, we implicitly use a theorem of J.-P Serre
which enables us to replace  a coherent sheaf $\cF$ on $\bP X_\tau$ by the graded $A_\tau$-module 
$\bigoplus_{n \geq 0} \Gamma(X_\tau,\cF(n))$
(see \cite[5.15]{Har}) since $A_\tau$ is finitely generated, graded $k$-algebra generated by elements of degree 1.

\begin{defn}
\label{defn:bPTheta}
We define 
$$\bP \Theta_\tau \ \equiv \ \sum_{g\in \tau, g^p=e} (g-e) \otimes (g-e)^\vee \ \in 
\  (A_\tau)_1 \otimes k\tau \ = \ \Gamma(\bP X_\tau,\cO_{\bP X_\tau}(1) \otimes k\tau).$$
For any $k\tau$-module $M$, we define
$$\bP \Theta_{\tau,M}: \cO_{\cO_{\bP X_\tau}} \otimes M \ \to \cO_{\bP X_\tau}(1) \otimes M$$
as in Definition \ref{defn:Theta-tau}, now viewed as a map from the 
graded $A_\tau \otimes k\tau$-module $A_\tau\otimes M$ to the graded $k\tau \otimes A_\tau$-module
$(A_\tau\otimes M)[-1]$.
\end{defn}

\vskip .1in

The following theorem is the projective version of Theorem \ref{thm:affine-vec}. 
 
\begin{thm}
\label{thm:projbundle}
Let $M$ be a finite dimensional $k\tau$-module.  Then $\bP \Theta_{\tau,M}$ of Definition \ref{defn:bPTheta}
is a map of $\tau$-equivariant, coherent sheaves on $\bP X_\tau$ whose $p$-th iteration,
$(\bP \Theta_{\tau,M})^p: \cO_{\cO_{\bP X_\tau}} \otimes M \ \to \  \cO_{\bP X_\tau}(p) \otimes M$, is 0.

For any $j, 1 \leq j < p$, the kernel $ker\{ (\bP \Theta_{\tau,M})^j\}  \subset \cO_{\cO_{\bP X_\tau}} \otimes M$, the
cokernel $ \cO_{\cO_{\bP X_\tau}}(j) \otimes M \  \twoheadrightarrow  coker\{ (\bP \Theta_{\tau,M})^j) \}$, and 
the image  $im\{ (\bP \Theta_{\tau,M})^j\}  \subset \cO_{\cO_{\bP X_\tau}}(j) \otimes M $ are 
 $\tau$-equivariant, coherent sheaves on $\bP X_\tau$.
 
If $M$ is a finite dimensional $\tau$-module of constant $j$-rank, then the restrictions
to $\bU_\tau \equiv \bP X_\tau \backslash \bP X^{(2)}_\tau \ \subset  \bP X_\tau$  of 
$$ker\{ (\bP \Theta_{\tau,M})^j\}, \quad coker\{ (\bP \Theta_{\tau,M})^j) \}, \quad im\{ (\bP \Theta_{\tau,M})^j\} $$
are $\tau$-equivariant vector bundles.

Moreover, if $\phi: \sigma \to \tau$ is an inclusion of finite groups, then 
$$\Phi^*(ker\{ (\bP \Theta_{\tau,M})^j\}) \ = \ ker\{ (\bP \Theta_{\sigma,M})^j\}$$ 
(and similarly with $ker\{- \}$ replaced by $coker\{- \}$ or $im\{- \}$) for any finite dimensional
$\tau$-module $M$, where $\Phi: \bU_\sigma \ \to \ \bU_\tau$ is the closed immersion induced by $\phi$.
\end{thm}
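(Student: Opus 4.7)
The strategy is to reduce to the affine analogue Theorem \ref{thm:affine-vec} via Serre's correspondence between finitely generated graded $A_\tau$-modules (modulo torsion) and coherent sheaves on $\bP X_\tau = \Proj A_\tau$, which applies because $A_\tau$ is generated in degree one. Under this correspondence, $\bP\Theta_{\tau,M}$ is the sheafification of the graded $A_\tau$-linear map $\Theta_{\tau,M}: A_\tau \otimes M \to (A_\tau \otimes M)[-1]$ of degree $+1$, since each $(g-e)^\vee$ lies in $(A_\tau)_1$. The $\tau$-equivariance of $\bP\Theta_{\tau,M}$ is then inherited from Proposition \ref{prop:Theta-tauM}, while $(\bP\Theta_{\tau,M})^p = 0$ follows from Proposition \ref{prop:pnilp-affine}, since sheafification preserves the vanishing of $(\Theta_{\tau,M})^p$.

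The $\tau$-equivariant coherence of the kernel, image, and cokernel is automatic, since $\tau$-equivariant coherent sheaves on $\bP X_\tau$ form an abelian category and $(\bP\Theta_{\tau,M})^j$ is a morphism therein. For the vector bundle statement on $\bU_\tau$ under the constant $j$-rank hypothesis, I would apply Proposition \ref{prop:locfree} exactly as in the proof of Theorem \ref{thm:affine-vec}: at a closed point $x \in \bU_\tau$ represented by some $\xi: A_\tau \to k(x)$ lying in $\bA_{J_E} \setminus \bA_{J_E^2}$, Proposition \ref{prop:Theta-tauM} identifies the fiber of $(\bP\Theta_{\tau,M})^j$ at $x$ with the action of $(\Theta_{\tau,\xi})^j$ on $k(x) \otimes M$; the condition $\xi \notin \bP X_\tau^{(2)}$ makes $\alpha_\xi: k(x)[u]/u^p \to k(x)E \subset k(x)\tau$ flat, so the constant $j$-rank hypothesis forces $\dim_{k(x)} im\{(\bP\Theta_{\tau,M})^j\}_{(x)}$ to be constant on $\bU_\tau$. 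Local freeness of the image follows, and splitting locally the two short exact sequences
\[ 0 \to ker\{(\bP\Theta_{\tau,M})^j\} \to \cO_{\bU_\tau}\otimes M \to im\{(\bP\Theta_{\tau,M})^j\} \to 0, \]
\[ 0 \to im\{(\bP\Theta_{\tau,M})^j\} \to \cO_{\bU_\tau}(j)\otimes M \to coker\{(\bP\Theta_{\tau,M})^j\} \to 0 \]
delivers local freeness of the kernel and cokernel.

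For the naturality claim under $\phi: \sigma \hookrightarrow \tau$, Proposition \ref{prop:tau-natural} shows that $\Theta_{\sigma,M}^j$ is the base change of $\Theta_{\tau,M}^j$ along the surjection $A_\tau \twoheadrightarrow A_\sigma$ induced by restricting the inverse limits indexed by $\cE(\sigma) \subset \cE(\tau)$. Sheafifying and restricting to $\bU_\sigma$ produces natural comparison maps $\Phi^* ker\{(\bP\Theta_{\tau,M})^j\} \to ker\{(\bP\Theta_{\sigma,M})^j\}$ (and analogously for image and cokernel). The cokernel case is formal from right-exactness of $\Phi^*$, and the image case follows from right-exactness together with surjectivity onto the image. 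The main obstacle, and the step requiring the most care, is the kernel case, since pullback along the closed immersion $\Phi$ is not in general left-exact. I expect to handle this by a fiberwise comparison using Proposition \ref{prop:Theta-tauM}: at any $x \in \bU_\sigma$, the specialization $\Theta_{\tau,\xi}$ factors through $k(x)\sigma \subset k(x)\tau$, so the fiber of $ker\{(\bP\Theta_{\tau,M})^j\}$ at $\Phi(x)$ coincides with the fiber of $ker\{(\bP\Theta_{\sigma,M})^j\}$ at $x$; together with the comparison map built from Proposition \ref{prop:tau-natural} and right-exactness of $\Phi^*$, this forces the comparison map to be an isomorphism on $\bU_\sigma$.
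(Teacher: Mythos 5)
Your proposal is correct and follows essentially the same route as the paper: reduce to the affine statements (Propositions \ref{prop:Theta-tauM} and \ref{prop:pnilp-affine}) via the graded/Serre correspondence, get equivariant coherence from the abelian category of $\tau$-equivariant sheaves, prove local freeness of the image on $\bU_\tau$ fiberwise via Proposition \ref{prop:locfree} and then propagate to kernel and cokernel through the two short exact sequences, and deduce naturality from Proposition \ref{prop:tau-natural}. Your extra care about the failure of left-exactness of $\Phi^*$ in the kernel case is a point the paper's proof passes over silently, and your fiberwise repair is a reasonable way to close it.
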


\begin{proof}
The $p$-nilpotence of $\bP \Theta_{\tau,M}$ follows from the fact that $\Theta_M^p: A_\tau \otimes M 
\to A_\tau \otimes M $ is 0 as shown in Proposition \ref{prop:pnilp-affine}.  The fact that the kernel,
cokernel, and image of $(\bP \Theta_{\tau,M})^j$ are $\tau$-equivariant, coherent sheaves on $X_\tau$
arises from the fact that $\tau$-equivariant, coherent sheaves on $\bP X_\tau$ form an abelian category 
and that $(\bP \Theta_{\tau,M})^j$ is $\tau$-equivariant.

Assume that $M$ has constant $j$-rank.  As in the proof of Proposition \ref{prop:projlimits}, let $U_x 
\equiv Spec (A_\tau[1/F_\xi])_0$ be an affine open neighborhood of some point $\xi \in \bU_\tau$.    We essentially
repeat the proof of Theorem \ref{thm:affine-vec} by first observing that $im\{ (\bP \Theta_{\tau,M})^j) \}$ is locally free
restricted to $Spec (A_\tau[1/F_\xi])_0$ by definition of constant $j$-rank and Proposition \ref{prop:locfree}.
We  then use  the short exact sequences of sheaves on $\bP X_\tau$:
$$0 \to ker\{ (\bP \Theta_{\tau,M})^j\}  \ \to \ \cO_{\bU_\tau} \otimes M \ \to \ im\{ (\bP \Theta_{\tau,M})^j) \} \to 0$$
$$ 0 \to im\{ (\bP \Theta_{\tau,M})^j\} \ \to \  \cO_{\bU_\tau}(j) \otimes M\ \to \ coker\{ (\bP \Theta_{\tau,M})^j) \} \to 0$$
to conclude that both $ker\{ (\bP \Theta_{\tau,M})^j\}$ and $coker\{ (\bP \Theta_{\tau,M})^j) \}$ are locally free
when restricted to $\bU_\tau$.

The naturality statement with respect to an inclusion $\phi: \sigma \to \tau$ of finite groups follows from 
Proposition \ref{prop:tau-natural} granted that the inclusion $\phi$ determines $\Phi: \bU_\sigma \ \to \ \bU_\tau$.
\end{proof}

\begin{remark}
\label{remark-BP}
Following \cite{BP}, we should also consider subquotients of the form 
$$ker\{ \bP \Theta_{\tau,M}^{j+1}\} \cap im\{  \bP \Theta_{\tau,M}^{i-j-1} \} /
ker\{ \bP \Theta_{\tau,M}^{j+1}\} \cap im\{  \bP \Theta_{\tau,M}^{i-j} \} + 
ker\{ \bP \Theta_{\tau,M}^j \} \cap im\{  \bP \Theta_{\tau,M}^{i-j-1} \} $$
to obtain further vector bundles associated to $k\tau$-modules of constant Jordan type.
\end{remark}

\vskip .2in


\section{Example of $\tau = E$, an elementary abelian $p$-group}

We recall from \cite[7.5]{FP2} that $\Proj H^\bu(\tau,k)$ is isomorphic to the scheme \ $\Pi(\tau)$ \ of 
equivalence classes of ``$\pi$-points of $\tau$", where a $\pi$-point of $\tau$ is a flat map $\alpha_K: K[u]/u^p \to 
K\tau$ of $K$-algebras factoring through an abelian subgroup for some field extension $K/k$; each equivalence class
of $\pi$-point is represented by a flat map factoring through the group algebra of some $E \subset \tau$.
As mentioned earlier, for an elementary abelian $p$-group $E$ and a field extension $K/k$, 
a $K$-algebra map  $K[u]/u^p \to KE$ is 
flat if and only if $u$ is sent to an element of $J_E \backslash J_E^2$.

The map $\bP Y_\tau \ = \ \varinjlim_{E \in \cE(\tau)} \bP Y_E\ \to \ \Proj H^\bu(\tau,k)$ is induced by the 
maps $\bP Y_E \to \Proj H^\bu(\tau,k)$ for $E \in \cE(\tau)$ which are defined as follows.
An equivalence class of $\pi$-points represented by a non-zero element of $u \in (J_E/J_E^2)\otimes K$  
is sent to the intersection with $H^\bu(\tau,k)$ of the kernel of the map in cohomology induced by 
$\alpha_K: K[t]/t^p \ \to KE, t \mapsto u$ (see \cite{FP2}).  These maps are compatible with
inclusions $E^\prime \to E$.  
Since every $\pi$-point of $k\tau$ is  equivalent to one of this form, the map  
$\bP Y_\tau \ \to \ \Proj H^\bu(\tau,k)$ is surjective.  Since $\tau$ acts trivially on
$H^\bu(\tau,k)$, this map factors as 
$$\bP Y_\tau \ \twoheadrightarrow (\bP Y_\tau )/\tau \ \to  \Proj H^\bu(\tau,k).$$

The following proposition makes explicit the map $S^\bu(J_E^*) \ \to \ S^\bu((J_E/J_E^2)^*)$ induced by the section
$s_E: J_E/J_E^2 \to J_E$ associated to a choice of basis for $E$.

\begin{prop}
\label{prop:radical-map}
Let $E \simeq \bZ/p^{\times r}$ be an elementary abelian $p$-group with basis $g_1,\ldots,g_r \in E$.  
Thus, the natural quotient map $J_E \ \to \ J_E/J_E^2$ sends $g_1^{c_1} \cdots g_r^{c_r}-e \in J_E$ to 
$\sum_{i=1}^r c_i (\ol{g_i-e}) \in J_E/J_E^2$.

The choice $\{ g_1,\ldots,g_r \}$ of basis for $E$ determines the section 
$$s_E: J_E/J_E^2 \  \to \ J_E,  \quad \ol{g_i-e} \mapsto g_i-e .$$  
The induced restriction (quotient) map  $S^*(J_E^*) \to S^\bu((J_E/J_E^2)^*)$ sends 
$(g-e)^\vee$ to $\ol{(g_i-e)}^\vee$ if $g \ = \ g_i$ for some $i$ and to 0 otherwise.
\end{prop}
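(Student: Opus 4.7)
The plan is to verify first that $\{\overline{g_1-e},\ldots,\overline{g_r-e}\}$ is indeed a basis for $J_E/J_E^2$, then to check that $s_E$ is well defined on this basis and agrees with the asserted formula for the natural projection $J_E \twoheadrightarrow J_E/J_E^2$, and finally to compute the dual map on linear forms and extend by functoriality to the symmetric algebras.

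First, I would establish the identity
\begin{equation*}
g_1^{c_1}\cdots g_r^{c_r}-e \;\equiv\; \sum_{i=1}^r c_i(g_i-e) \pmod{J_E^2}.
\end{equation*}
The key formula is $(gh-e)=(g-e)(h-e)+(g-e)+(h-e)$, so modulo $J_E^2$ one has $gh-e\equiv (g-e)+(h-e)$. An easy induction on $c_1+\cdots+c_r$ then gives the displayed congruence. Since every element of $E$ is uniquely of the form $g_1^{c_1}\cdots g_r^{c_r}$ with $0\le c_i<p$, the elements $\{\overline{g_i-e}\}$ span $J_E/J_E^2$; counting dimensions (or observing linear independence via the standard $H^1$ computation for $E$) shows they form a basis of the $r$-dimensional space $J_E/J_E^2$. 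This simultaneously justifies the formula for the natural quotient $J_E\twoheadrightarrow J_E/J_E^2$ stated in the proposition.

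Since $\{\overline{g_i-e}\}$ is a basis, the prescription $s_E(\overline{g_i-e})=g_i-e$ defines a unique $k$-linear section of the quotient map $J_E \twoheadrightarrow J_E/J_E^2$. To compute the dual $s_E^*\colon J_E^*\to (J_E/J_E^2)^*$, I evaluate on the basis: for any $g\in E\setminus\{e\}$ and any $i$,
\begin{equation*}
s_E^*\bigl((g-e)^\vee\bigr)\bigl(\overline{g_i-e}\bigr) \;=\; (g-e)^\vee(g_i-e) \;=\; \delta_{g,g_i},
\end{equation*}
using the defining property $(g-e)^\vee(h-e)=\delta_{g,h}$ of the dual basis of $J_E^*$. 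Thus $s_E^*((g-e)^\vee)$ vanishes on every basis vector when $g\notin\{g_1,\ldots,g_r\}$, giving $s_E^*((g-e)^\vee)=0$, while if $g=g_i$ then $s_E^*((g_i-e)^\vee)$ equals the dual basis element $\overline{(g_i-e)}^\vee$.

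Finally, the induced map $S^\bullet(J_E^*)\to S^\bullet((J_E/J_E^2)^*)$ is by definition the unique graded $k$-algebra homomorphism extending $s_E^*$ in degree one; this gives exactly the formula in the statement. No real obstacle arises here; the only point requiring a little care is the first step confirming that $\{\overline{g_i-e}\}$ is a basis of $J_E/J_E^2$, which is nothing more than the standard identification of $J_E/J_E^2$ with the tangent space to the group scheme $E$ at the identity, obtained via the multiplicative-to-additive reduction above.
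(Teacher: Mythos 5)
Your proposal is correct and follows essentially the same route as the paper: the congruence $g_1^{c_1}\cdots g_r^{c_r}-e \equiv \sum_i c_i(g_i-e) \pmod{J_E^2}$ via the identity $(gh-e)=(g-e)(h-e)+(g-e)+(h-e)$ and induction on $\sum_i c_i$, followed by evaluating the dual basis vectors $(g-e)^\vee$ on $s_E(\ol{g_i-e})=g_i-e$. The only difference is that you spell out why $\{\ol{g_i-e}\}$ is a basis of $J_E/J_E^2$, which the paper takes as given.
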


\begin{proof}
The image of 
$$(g_i\cdot g_1^{c_1} \cdots g_r^{c_r}-e) - (g_i-e) - (g_1^{c_1} \cdots g_r^{c_r}-e) \ = \ (g_i-e)(g_1^{c_1} \cdots g_r^{c_r}-e)$$
in $J_E/J_E^2$ is 0,  so that the formula for the image of $g_1^{c_1} \cdots g_r^{c_r}-e$ follows by induction
on $c = \sum_i c_i \geq 1, \ c_i \geq 0.$

The identification of the restriction map  $S^*(J_E^*) \to S^\bu((J_E/J_E^2)^*)$ induced by $s_E$  is quickly 
checked by observing that the value the dual vector $(h-e)^\wedge$ on the basis  $\{ g-e; e\not= g \in E \}$ is 
equal to 1 if $h = g$ and 0 if $h \not= g$ (see Remark \ref{remark:contrast}).
\end{proof}

One can interpret the next proposition as saying that the construction of coherent sheaves in 
Theorem \ref{thm:projbundle} refines the construction introduced in \cite{FP4} for 
infinitesimal group schemes in the special case of $\bG_{a(1)}^{\times r}$ (whose group algebra
is isomorphic to $k\bZ/p^{\times r})$.  The reader is referred to \cite{BP} and \cite{Ben}, where many 
examples of vector bundles on projective spaces $\bP^{r-1}$ are investigated arising from modules of 
constant Jordan type for elementary abelian $p$-groups $E = \bZ/p^{\times r}$.  

\begin{notation}
We simplify the notation of \cite{FP4}.  For any infinitesimal group scheme $G$ of height $\leq r$,  any
$G$-module $M$, and any integer $j, 1 \leq j < p$, we 
use $(\bP \Theta_{G,M})^j$ in place of the notation $((\tilde \Theta_G)^j,M\otimes \cO_{\Proj k[V_r(G)]})$
of \cite[defn 4.6]{FP4}.
\end{notation}

\begin{prop}
\label{prop:elem}
let $E \simeq \bZ/p^{\times r}$ be an elementary abelian $p$-subgroup 
of $\tau$ and choose elements $g_1,\ldots,g_r \in E$ which generate $E$.   Let $s_E: \bP Y_E \to \bU_E$
be the splitting of $p_E: \bU_E \to \bP Y_E$ given by this basis  (see \ref{diag:sec}).
Then for any finite dimensional $kE$-module $M$,  the pull-backs along $s_E$ of the coherent sheaves 
$$ker\{ (\bP \Theta_{E,M})^j\}, \quad coker\{ (\bP \Theta_{E,M})^j) \}, \quad im\{ (\bP \Theta_{E,M})^j\} $$
on $\bU_\tau$  are naturally identified with the corresponding coherent sheaves 
$$ ker\{(\bP \Theta_{\bG_{a(1)}^{\times r},M})^j \}, \quad  
coker\{(\bP \Theta_{\bG_{a(1)}^{\times r},M})^j  \}, \quad 
\quad \quad im\{(\bP \Theta_{\bG_{a(1)}^{\times r},M})^j \}$$
on $\Proj k[V(\bG_{a(1)}^{\times r})]$  constructed in \cite{FP4}, where $M$ is given the structure of a 
$\bG_{a(1)}^{\times r}$-module using the isomorphism
\begin{equation}
\label{eqn:isoE}
kE \ \stackrel{\sim}{\to} \ k\bG_{a(1)}^{\times r} \equiv k[x_1,\ldots_r]/(x_i^p), \quad g_i \mapsto x_i+1
\end{equation}
\end{prop}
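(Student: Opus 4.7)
The plan is to reduce the identification of sheaves to a simple identification of the underlying $p$-nilpotent operators, by exploiting the fact that the FP4 construction is built in exactly the same formal way (namely, taking $\ker$, $\coker$, $\im$ of iterates of a universal $p$-nilpotent degree-$1$ operator) as ours, so once I show $s_E^*(\bP\Theta_E)=\bP\Theta_{\bG_{a(1)}^{\times r}}$ I am essentially done.

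First I would compute the pull-back of the universal operator.  By construction the section $s_E\colon \bP Y_E\hookrightarrow \bU_E\subset \bP X_E$ corresponds on (graded) rings to the surjection $s_E^{\#}\colon S^\bu(J_E^*)\twoheadrightarrow S^\bu((J_E/J_E^2)^*)$ computed in Proposition \ref{prop:radical-map}, which sends $(g-e)^\vee$ to $\overline{(g_i-e)}^{\vee}$ when $g=g_i$ and to $0$ otherwise.  Applying $s_E^{\#}\otimes 1_{kE}$ to the defining sum of $\bP\Theta_E=\sum_{g\in E}(g-e)^\vee\otimes(g-e)$ collapses all but $r$ terms, giving
\[
s_E^*(\bP\Theta_E)\;=\;\sum_{i=1}^r\overline{(g_i-e)}^{\,\vee}\otimes(g_i-e)\;\in\;S^\bu((J_E/J_E^2)^*)_1\otimes kE.
\]
Under the group algebra isomorphism (\ref{eqn:isoE}), $g_i-e\mapsto x_i$, and under the identification of $S^\bu((J_E/J_E^2)^*)$ with $k[V(\bG_{a(1)}^{\times r})]=k[X_1,\ldots,X_r]$ via $\overline{(g_i-e)}^\vee\leftrightarrow X_i$, this becomes $\sum_i X_i\otimes x_i=\bP\Theta_{\bG_{a(1)}^{\times r}}$ precisely as in Remark \ref{remark:contrast}.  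Because both sides are homogeneous of degree one, the same equality holds for the $j$-th iteration of the operator on $M$: $s_E^*(\bP\Theta_{E,M})^j=(\bP\Theta_{\bG_{a(1)}^{\times r},M})^j$ as maps $\cO_{\bP Y_E}\otimes M\to \cO_{\bP Y_E}(j)\otimes M$.

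Finally I would pass from operators to sheaves.  Since the image and cokernel functors commute with the right-exact base change $-\otimes_{S^\bu(J_E^*)}S^\bu((J_E/J_E^2)^*)$ encoding $s_E^*$, the pull-backs of $\coker\{(\bP\Theta_{E,M})^j\}$ and $\im\{(\bP\Theta_{E,M})^j\}$ are canonically identified with $\coker$ and $\im$ of $(\bP\Theta_{\bG_{a(1)}^{\times r},M})^j$, which are by definition the sheaves of \cite{FP4}.  The main obstacle is the kernel, since kernels do not in general commute with pullback.  To handle this I would use the exact sequence $0\to \ker\to \cO_{\bP X_E}\otimes M\to \im\to 0$: since the middle term is free, its pullback is $\cO_{\bP Y_E}\otimes M$, and one concludes $s_E^*\ker\{(\bP\Theta_{E,M})^j\}=\ker\{(\bP\Theta_{\bG_{a(1)}^{\times r},M})^j\}$ once $\mathrm{Tor}_1^{S^\bu(J_E^*)}(\im,S^\bu((J_E/J_E^2)^*))$ vanishes.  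This vanishing is automatic on $\bU_E$ whenever $\im\{(\bP\Theta_{E,M})^j\}$ is locally free there, which is the case for modules of constant $j$-rank by Theorem \ref{thm:projbundle}; in general one argues locally at each point $\xi\in \bP Y_E$ using that $s_E$ is split by $p_E$ composed with a linear projection, so the relevant graded module calculation reduces to the trivial case of a polynomial subalgebra inclusion.
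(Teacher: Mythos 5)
Your main computation is exactly the paper's: identify $s_E^*(\bP \Theta_E)$ by applying the quotient map $S^\bu(J_E^*)\twoheadrightarrow S^\bu((J_E/J_E^2)^*)$ of Proposition \ref{prop:radical-map} to the defining sum, observe that only the terms indexed by $g_1,\ldots,g_r$ survive, and match the result with $\bP \Theta_{\bG_{a(1)}^{\times r}}$ under the isomorphism (\ref{eqn:isoE}) as in Remark \ref{remark:contrast}; the paper then concludes the identification of the three coherent sheaves directly from the identification of the operators, which is where its proof stops. Your additional paragraph on base change is a genuine refinement of a point the paper elides, but it is also where your argument frays. Two corrections: first, only the cokernel commutes automatically with the non-flat pullback $s_E^*$ --- the image does \emph{not} in general (the obstruction is $\operatorname{Tor}_1(coker\{(\bP\Theta_{E,M})^j\},\cO_{\bP Y_E})$, not right-exactness), so your claim that image and cokernel both "commute with the right-exact base change" is too quick. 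Second, your closing sentence for the general case ("one argues locally \dots reduces to the trivial case of a polynomial subalgebra inclusion") is not an argument: for an arbitrary finite dimensional $M$ the sheaf $im\{(\bP\Theta_{E,M})^j\}$ need not be locally free along $s_E(\bP Y_E)$, the relevant $\operatorname{Tor}_1$ need not vanish, and in fact the literal equality $s_E^*\,ker\{(\bP\Theta_{E,M})^j\} = ker\{s_E^*(\bP\Theta_{E,M})^j\}$ can fail when the generic rank of $(\bP\Theta_{E,M})^j$ over $\bP X_E$ differs from its generic rank over the linear subspace $s_E(\bP Y_E)$. So either restrict the sheaf-level identification to the constant $j$-rank case (where your Tor-vanishing argument is complete), or read "pull-back" as the paper implicitly does --- namely, as the FP4 construction applied to the pulled-back operator --- in which case your first two paragraphs already constitute the whole proof.
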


\begin{proof}
We identify $Y_E$ with $\Spec H^\bu(E,k)$.  Consider
$$s_E^*(\bP \Theta_E) \ \in \ \Gamma(\bP Y_E,\cO_{\bP Y_E}(1)\otimes kE),$$
where $\bP \Theta_E \ \in \ \Gamma(\bP X_E,\cO_{\bP X_E}(1)\otimes kE)$ is given in Definition \ref{defn:bPTheta}.
We employ the isomorphism (\ref{eqn:isoE}) inducing a $p$-isogeny 
$$\bP Y_E  \ \stackrel{\simeq}{\to}  \ \Proj H^\bu(\bG_{a(1)}^{\times r},k) \equiv \Proj S^\bu(\fg_a^{*\oplus r}).$$
This isomorphism sends $s_E^*(\bP \Theta_E)$ to 
$$\bP \Theta_{\bG_{a(1)}^{\times r}} \ \in \Gamma(\cO_{\Proj S^\bu(\fg_a^{*\oplus r})}(1) \otimes k\bG_{a(1)}^r) $$
as formulated in \cite[Defn 4.6]{FP4}.

Utilizing the isomorphism (\ref{eqn:isoE}) to identify modules  for $kE$ and for 
$k\bG_{a(1)}^{\times r}$, we easily verify that the isomorphism (\ref{eqn:isoE}) leads to an identification
of $$s_E^*(\bP \Theta_{E,M}): M \otimes \cO_{\bP X_E} \to M \otimes \cO_{\bP X_E}(1)$$
with 
$$\bP \Theta_{\bG_{a(1)}^{\times r},M}: M \otimes \cO_{\Proj S^\bu(\fg_a^{*\oplus r})} \to 
M \otimes \cO_{\Proj S^\bu(\fg_a^{*\oplus r})}(1).$$
In particular, we conclude the asserted identification of coherent sheaves on $\bP Y_E$ and 
$\Proj S^\bu(\fg_a^{*\oplus r})$.
\end{proof}

\begin{remark}
One might consider conditions on a finite dimensional $k\tau$-module $M$ which are analogues of the condition
that $M$ have constant $j$-rank for some $j, 1 \leq j < p$.  For example, one might require $M$ to have the property
that the rank of $u$ as an endomorphism of $M$ is independent of the choice of $u \in J_E^j \backslash J_E^{j+1}$
and $E \in \cE(\tau)$, a seemingly stronger condition on $M$ than having constant $j$-rank.  
This condition should imply that $im\{ \bP_{\tau,M} \}$ restricted to $\bU_\tau^{(j)} \equiv \bP X_\tau^{(j)} \backslash 
\bP X_\tau^{(j+1)}$ is a vector bundle. 
\end{remark}

We understand the algebraic K-theory of projective spaces, but there are many difficult questions 
which remain unanswered about existence and properties of vector bundles on projective spaces (as 
opposed to their stable equivalence classes).  The remainder of this section reflects this lack of understanding.

\begin{question}
\label{ques:iso1}
Consider $E \simeq \bZ/p^{\times r}$ and a $kE$-module $M$ of constant $j$-rank.   Are the isomorphism 
classes of the vector bundles 
$$s_E^*(ker\{ (\bP \Theta_{E,M})^j\}), \quad s_E^*(coker\{ (\bP \Theta_{E,M})^j) \}), \quad s_E^*(im\{ (\bP \Theta_{E,M})^j\}) $$
(see Proposition \ref{prop:elem}) independent of the choice of generators
$\{ g_1,\ldots,g_r \}$ for $E$?
\end{question}

Since the construction of the coherent sheaves on $\bU_E$
$$ker\{ (\bP \Theta_{E,M})^j\}, \quad coker\{ (\bP \Theta_{E,M})^j) \}, \quad im\{ (\bP \Theta_{E,M})^j\} $$
is independent of choice of basis for $E$, Question \ref{ques:iso1} can be equivalently formulated in terms
of the dependence on the choice of section $s_E(-)$ of the isomorphism class of pull-backs of vector bundles on
$\bU_E$.

We provide a related question whose answer escapes us.

\begin{question}
\label{ques:iso2}
Fix a given section $s_E: \bP Y_E \ \to\bU_E$ of $p_E: \bU_E \ \to \ \bP Y_E$ and consider
two $kE$-modules $M$ and $M^\prime$ of constant $j$-rank.   If 
$$s_E^*(\{ ker\{ (\bP \Theta_{E,M})^j\}) \ \simeq \ s_E^*(\{ ker\{ (\bP \Theta_{E,M^\prime})^j\})$$
as vector bundles on $\bP Y_E$, then are 
$ ker\{ \bP (\Theta_{E,M})^j\}, \ ker\{ \bP (\Theta_{E,M^\prime})^j\}$ isomorphic as vector bundles on $\bU_E$? 

Of course, one can ask the same question for cokernel and image vector bundles.
\end{question}

As shown in Proposition \ref{prop:iso-KE}, these question have an affirmative answer if we replace 
isomorphism class of vector bundle by stable equivalence class (i.e., class in the Grotherdieck group $K_0$).

\begin{prop}
\label{prop:aff-fib}
Consider $p_E: \bU_E = \bP X_E \backslash \bP X^2_E \ \to \ Y_E$ as in Proposition \ref{prop:mod-tau} for
some elementary abelian $p$-group $E  \simeq \bZ/p^{\times r}$. 
\begin{enumerate}
\item
 $p_E: \bU_E \to \bP Y_E$ is the map of schemes $Vec(\cO_{\bP Y_E}(-1))^{\oplus N}) \to \bP Y_E$
associated to the vector bundle $(\cO_{\bP Y_E}(-1))^{\oplus N}$, where $N = dim(J_E^2) = p^r-1-r$.
\item
Let $s_E: \bP Y_E \to \bU_E$ be a section of $p_E: \bU_E \to \bP Y_E$ as in Corollary \ref{cor:surj}.  There
exists a map $\Psi: \bU_E \times \bA^1 \to \bU_E$ over $\bP Y_E$  such that 
$\Psi_{\bU_E \times \{ 1 \} } = id_{\bU_E}$ and $\Psi_{\bU_E \times \{ 0 \}} = s_E \circ p_E$.
\item
Let $s_E, s_E^\prime: J_E/J_E^2 \to J_E$ be two sections of the quotient $J_E \twoheadrightarrow J_E/J_E^2$
with associated sections $s_E, s_E^\prime: \bP Y_E \to \bU_E$.   Then there is a map $S_E: \bP Y_E \times \bA_1
\to \bU_\tau$ over $\bP Y_E$ whose restriction to $\bP Y_E \times  \{ 0 \}$ is $s_E$ and whose restriction to 
$\bP Y_E \times  \{ 1 \}$ is $s_E^\prime$.
\end{enumerate}.
\end{prop}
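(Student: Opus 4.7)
The plan is to deduce all three statements from a direct identification of $p_E\colon \bU_E \to \bP Y_E$ as a rank-$N$ vector bundle, with $N = \dim J_E^2 = p^r - 1 - r$. Any splitting $s_E\colon J_E/J_E^2 \to J_E$ induces a graded algebra decomposition
\[ S^\bu(J_E^*) \;\cong\; S^\bu((J_E^2)^*) \otimes S^\bu((J_E/J_E^2)^*). \]
Choosing bases $\bar x_1, \ldots, \bar x_r$ of $(J_E/J_E^2)^*$ and $y_1, \ldots, y_N$ of $(J_E^2)^*$, the closed subscheme $\bP X_E^{(2)} \subset \bP X_E$ is cut out by the ideal $(\bar x_1, \ldots, \bar x_r)$, so $\bU_E$ is covered by the opens
\[ U_i \;=\; \Spec k[\bar x_j/\bar x_i,\; y_a/\bar x_i \mid j\ne i,\; 1 \le a \le N]. \]
The restriction $p_E|_{U_i}$ onto $\Spec k[\bar x_j/\bar x_i] \subset \bP Y_E$ is a trivial $\bA^N$-bundle with fibre coordinates $y_a/\bar x_i$, and on overlaps the identity $y_a/\bar x_j = (\bar x_i/\bar x_j)(y_a/\bar x_i)$ shows that the transition function is that of $\cO_{\bP Y_E}(1)$. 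Hence $\bU_E$ is the total space of $\cO_{\bP Y_E}(1)^{\oplus N}$, with $s_E$ as its zero section, i.e.\ isomorphic to $\mathrm{Vec}((\cO_{\bP Y_E}(-1))^{\oplus N})$ in the paper's convention; this proves (1).

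Parts (2) and (3) now follow formally from this vector bundle structure. For (2), taking $s_E$ as zero section of $p_E$, set
\[ \Psi(x, t) \;=\; s_E(p_E(x)) + t\bigl(x - s_E(p_E(x))\bigr), \]
interpreting all operations fibrewise in the vector bundle. Since $s_E$ is a section of $p_E$, the displacement $x - s_E(p_E(x))$ and the sum both lie in the fibre of $p_E$ over $p_E(x)$, so $\Psi$ is a morphism $\bU_E \times \bA^1 \to \bU_E$ over $\bP Y_E$; evaluation shows $\Psi|_{t=1} = \mathrm{id}_{\bU_E}$ and $\Psi|_{t=0} = s_E \circ p_E$. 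For (3), the difference $s_E'(y) - s_E(y)$ is a well-defined section of the vector bundle (using $s_E$ as zero section), and
\[ S_E(y, t) \;=\; s_E(y) + t\bigl(s_E'(y) - s_E(y)\bigr) \]
yields the required morphism $\bP Y_E \times \bA^1 \to \bU_E$ over $\bP Y_E$, with $S_E|_{t=0} = s_E$ and $S_E|_{t=1} = s_E'$.

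The substantive step is the coordinate/transition computation in part (1); conceptually it is the classical identification of $\bP(V \oplus W) \setminus \bP(V) \to \bP(W)$ as the twisted vector bundle $V \otimes \cO_{\bP(W)}(1)$, but care is needed to pin down the direction of the Serre twist and the $\mathrm{Vec}$ convention. Once the vector bundle description is in place, (2) amounts to the standard fact that scalar multiplication $t \cdot v$ contracts a vector bundle to its zero section, and (3) amounts to linear interpolation between two sections; both are immediate consequences of the $\cO_{\bP Y_E}$-linear structure on the fibres of $p_E$.
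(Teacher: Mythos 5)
Your proof is correct and follows essentially the same route as the paper: identify $p_E\colon \bU_E \to \bP Y_E$ as the total space of a sum of Serre-twisted line bundles via the join decomposition $\bP X_E \simeq \bP X_E^{(2)} \,\#\, \bP Y_E$, then obtain (2) by fibrewise scalar contraction onto the zero section and (3) by linear interpolation between the two splittings. The only difference is cosmetic — you spell out the transition-function computation that the paper dismisses as "easily checked," and you phrase (2) and (3) intrinsically rather than in homogeneous coordinates.
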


\begin{proof}
Using the isomorphism $\bP X_E \simeq \bP X_E^{(2)} \# \bP Y_E$ determined by a splitting of 
$0 \to J_E^2 \to J_E \to J_E/J_E^2 \to 0$, we readily observe that $p_E: \bU_E \to \bP Y_E$ 
is the structure map (as a map of schemes) of a direct sum of line bundles; we easily check that 
each of these line bundles is isomorphic to $\cO_{\bP Y_E}(-1)$.

We define $\Psi: \bU_E \times \bA^1 \to \bU_E$ over $\bP Y_E$ as follows.  Choose linear
coordinate functions $Y_1,\ldots,Y_N$ spanning the global sections of $\cO_{\bP X_E^{(2)}}(1)$
and linear coordinate functions $X_1,\ldots X_r $ spanning the global sections of $\cO_{\bP Y_E}(1)$.  Over
$V_i =  \bP Y_E - Z(X_i)$ we define $\Psi(\langle b_1,\ldots, b_N,a_1,\ldots,a_r \rangle,s)$ with $a_i \not= 0$ to be
$\langle sb_1,\ldots, sb_N,a_1,\ldots, a_r\rangle$.  This clearly patches on the open covering $\{ V_i \}$ of $\bP Y_E$
 to determine $\Psi$ with 
$\Psi_{\bU_E \times \{ 1 \} } = id_{\bU_E}$ and $\Psi_{\bU_E \times \{ 0 \}} = s_E \circ p_E$.
This implies the second assertion.

Fix a splitting $s_0: J_E/J_E^2 \to J_E$ and set $\bA_H \ = \ S^\bu(Hom_k(J_E/J_E^2,J_E^2)^*)$.
We may parametrize the  splittings of $J_E \twoheadrightarrow J_E/J_E^2$ by 
$\bA_H$, a map $h \in Hom_k(J_E/J_E^2,J_E^2)$ corresponding to the splitting $s_0 +h$.
Define $\Delta: \bP Y_E \times \bA_H \to \bU_E$ to be the map sending the point 
$(\langle a_1,\ldots,a_r \rangle,h) \in \bP Y_E \times \bA_H$
to the point $\langle (s_0+h)_1(\ul a),\dots,(s_0+h)_N(\ul a),a_1,\ldots,a_r\rangle \in \bU_E$.  
Let $\ell: \bA^1 \to \bA_H$ be the line with $\ell(0) = s_E-s_0$ and 
$\ell(1) = s_{E^\prime} - s_0$.  Then $S_E \ = \ \Delta_{| \bP Y_E \times \ell(\bA_1)}$ provides the
map asserted in the third statement.
\end{proof}

We recall that $K_0(X)$ of a scheme $X$ is the Grothendieck group of locally free,
coherent $\cO_X$-modules; more explicitly,  $K_0(X)$ is the free abelian group on the set 
of isomorphism classes of locally free, coherent $\cO_X$-modules modulo the relations 
$[E] \ = \ [E^\prime] + [E^{\prime\prime}]$ associated to short exact sequences $0 \to E^\prime \to E \to E^{\prime\prime} \to 0$.  
It is useful to observe that $K_0(X)$ admits a natural ring structure given by tensor product of $\cO_X$-modules.  
If $X = \Spec A$ for a commutative ring $A$, then $K_0(X)$ is the group completion of the abelian monoid
of finitely generated, projective $A$-modules.

Passing from isomorphism classes of modules to their classes in $K_0$ loses
considerable information.  One example of this is that the computation of $K_0(\bP^n)$ is well known (isomorphic
as a ring to $\bZ[t]/t^{n+1}$, generated as an abelian group by the line bundles $\cO_{\bP^n}, \cO_{\bP^n}(1), 
\cdots,  \cO_{\bP^n}(n)$.   This stands in contrast to the long-standing geometric challenge of establishing the existence 
(or proving the non-existance)
of indecomposable vector bundles on $\bP^n$ of rank at least 2 and less than $n-1$; there are very few sporadic 
results addressing this problem.  

We also recall that $K_0^\prime(X)$ of a scheme $X$ is the Grothendieck group of 
coherent $\cO_X$-modules; more explicitly,  $K_0(X)$ is the free abelian group on the set 
of isomorphism classes of locally free, coherent $\cO_X$-modules modulo the relations 
$[\cF] \ = \ [\cF^\prime] + [\cF^{\prime\prime}]$ associated to short exact sequences 
$0 \to \cF^\prime \to \cF \to \cF^{\prime\prime} \to 0$.  

\begin{prop}
\label{prop:iso-KE}
The maps 
$$p_E^*: K_*(\bP Y_E) \ \to \ K_*(\bU_E), \quad p_E^*: K_*^\prime(\bP Y_E) \ \to \ K_*^\prime(\bU_E)$$ 
are isomorphisms.

In particular, the class in $K_0(\bP^{r-1})$ of $\ker\{ (\bP \Theta_{\bG_{a(1)}^{\times r},M})^j \}$ for a $kE$-module $M$ of 
constant $j$-rank (as considered in \cite{FP4})
does not depend upon the choice of generators for $E$.
\end{prop}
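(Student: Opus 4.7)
My plan is to deduce both assertions from Proposition \ref{prop:aff-fib}(1), which identifies $p_E: \bU_E \to \bP Y_E$ with the structure morphism of the total space of the vector bundle $\cO_{\bP Y_E}(-1)^{\oplus N}$, where $N = p^r - 1 - r$. Indeed, for the projection of a geometric vector bundle $V \to X$, the homotopy invariance of algebraic $K$-theory gives an isomorphism on $G$-theory (Quillen's homotopy theorem for $K_*^\prime$ of the fibration by affine spaces), and since $X = \bP Y_E$ is regular and noetherian, so is the vector bundle $\bU_E$; on regular noetherian schemes the natural map $K_*(-) \to K_*^\prime(-)$ is an isomorphism, so we obtain the same conclusion for $K_*$.

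First I would verify that $\bU_E$ is regular noetherian (since $\bP Y_E$ is a projective space and $\cO_{\bP Y_E}(-1)^{\oplus N}$ is a locally free coherent sheaf, its total space is regular). Then I would invoke the homotopy invariance $p_E^*: K_*^\prime(\bP Y_E) \xrightarrow{\sim} K_*^\prime(\bU_E)$ in the form of Quillen's theorem, and transport the statement to $K_*$ via the Poincar\'e duality isomorphism $K_*(X) \simeq K_*^\prime(X)$ available on both regular schemes. This establishes the first assertion.

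For the ``in particular'' assertion, the key observation is that any section $s_E: \bP Y_E \to \bU_E$ of $p_E$ (such as those produced by a choice of generators via Corollary \ref{cor:surj}) satisfies $p_E \circ s_E = \mathrm{id}_{\bP Y_E}$, hence $s_E^* \circ p_E^* = \mathrm{id}$ on $K_0$. Since $p_E^*$ is an isomorphism by the first part, this forces
\[
s_E^* \;=\; (p_E^*)^{-1} : K_0(\bU_E) \to K_0(\bP Y_E),
\]
a map that is manifestly independent of the choice of section. Now Proposition \ref{prop:elem} identifies the pullback $s_E^*(\ker\{(\bP \Theta_{E,M})^j\})$ on $\bP Y_E \simeq \bP^{r-1}$ with $\ker\{(\bP \Theta_{\bG_{a(1)}^{\times r},M})^j\}$, the identification depending on the choice of generators determining $s_E$. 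Since the underlying $K_0$-class equals $(p_E^*)^{-1}([\ker\{(\bP \Theta_{E,M})^j\}])$ independently of the section, the class in $K_0(\bP^{r-1})$ is independent of the choice of generators for $E$. The same argument applies verbatim with $\ker\{-\}$ replaced by $\mathrm{coker}\{-\}$ or $\mathrm{im}\{-\}$.

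No step presents a serious obstacle; the only point requiring care is the passage from $K_*^\prime$ to $K_*$, which relies on regularity of $\bU_E$. Once that is in hand, the argument is a formal consequence of the factorization $s_E^* \circ p_E^* = \mathrm{id}$ combined with the invertibility of $p_E^*$.
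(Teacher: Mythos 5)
Your proposal is correct and follows essentially the same route as the paper: Quillen's homotopy invariance for $K'$ applied to the vector-bundle structure of $p_E$ from Proposition \ref{prop:aff-fib}(1), together with the fact that the construction of $\ker\{(\bP \Theta_{E,M})^j\}$ on $\bU_E$ is canonical. You merely make explicit two points the paper leaves implicit — the passage from $K'_*$ to $K_*$ via regularity of $\bU_E$, and the identity $s_E^* = (p_E^*)^{-1}$ showing independence of the section — both of which are sound.
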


\begin{proof}
The isomorphisms $p_E^*$ are consequences of Quillen's homotopy invariance theorem for $K^\prime$ as
given in \cite[Prop 4.1]{Q} together with Proposition \ref{prop:aff-fib}(1).

Because the construction of $\ker\{ (\bP \Theta_{E,M} \}$ does not depend upon a choice of generators
for $E$, the class of $\ker\{ (\bP \Theta_{E,M} \}$ in $K_0(\bU_E)$ for $M$ a $kE$-modules of 
constant $j$-rank, Theorem \ref{thm:projbundle} implies that the class of  $\ker\{ (\bP \Theta_{\bG_{a(1)}^{\times r},M})^j \}$
in $K_0(\bP Y_E) \simeq K_0(\bU_E)$ also does depend upon a choice of generators for $E$.
\end{proof}

\vskip .2in


\section{Classes in K-theory}
\label{sec:Ktheory}

In this section, we briefly investigate the maps in $K$-theory and $K^\prime$-theory given by 
taking kernels, cokernels, and images of the maps $\Theta_{\tau,M}^j$ and $\bP \Theta_{\tau,M}^j$.
Here, $\tau$ is an (arbitrary) finite group, $M$ is a finitely generated $k\tau$-module, and $j$ is a positive
integer with $1 \leq j < p$.  As an introduction to equivariant $K$-theory, we recommend the survey article \cite{Mer} 
by A. Merkurjev.

We begin by introducing the following notation.  We restriction our attention to $K_0, K_0^\prime$, even though
the formalism of higher algebraic $K$-theory applies.

\begin{note}
\label{note:K}
We recall the following Grothendieck groups.  To avoid technicalities, we shall consider only Noetherian schemes.
\begin{enumerate}
\item
$R_k(\tau)$ (the Green ring of $k\tau$) is the group completion of the abelian monoid of isomorphism classes of
finite dimensional (left) $k\tau$-modules.
\item
$K_0(k\tau)$ is the group completion of the abelian monoid of isomorphism classes of finitely generated, projective
 (left) $k\tau$-module.
\item 
For a scheme $X$ equipped with an action of $\tau$, $K_0(\tau;X)$ is the Grothendieck group of $\tau$-equivariant,
locally free,  coherent $\cO_X$-modules  defined as the free abelian group on the set of isomorphism classes of such 
$\tau$-equivariant, locally free,  coherent $\cO_X$-modules modulo the relations
$[\cM] \ = \ [\cM_1] + [\cM_2]$ associated to short exact sequences of such modules of the form
$0 \to \cM_1 \to \cM \to \cM_2 \to 0$.  The ring $K_0(\tau;X)$ (with multiplication given by tensor product) is contravariant 
in $X$ and admits an evident restriction map for subgroups $\tau^\prime \hookrightarrow \tau.$
\item
For a scheme $X$ equipped with an action of $\tau$, $K_0^\prime(\tau;X)$ is the Grothendieck group of $\tau$-equivariant,
coherent $\cO_X$-modules  defined as the free abelian group on the set of isomorphism classes of such 
$\tau$-equivariant,  coherent $\cO_X$-modules modulo the relations
$[\cF] \ = \ [cF_1] + [\cF_2]$ associated to short exact sequences of such modules of the form
$0 \to \cF_1 \to \cF \to \cF_2 \to 0$.   Tensor product gives $K_0^\prime(\tau;X)$ the structure of a module over $K_0(\tau;X)$.
\end{enumerate}
\end{note}

The following is a direct consequence of Theorem \ref{thm:projbundle}

\begin{prop}
\label{prop:Ktau}
Consider a finite group $\tau$ and a finite dimensional $k\tau$-module $M$.  Then 
associating to $M$ the coherent sheaf $ker\{ \bP \Theta_{\tau,M}^j \}$ on $\bP X_\tau$ as in 
Theorem \ref{thm:projbundle} determines a group homorphism 
$$\kappa_j: R_k(\tau) \ \to \ K_0^\prime(\tau;\bP X_\tau), \quad 1 \leq j < p$$
which is contravariantly functorial with respect to injective maps $\sigma \to \tau$.
A similar statement applies if $ker\{ \bP \Theta_{\tau,M}^j \}$ is replaced by $coker\{ \bP \Theta_{\tau,M}^j \}$
or $im\{ \bP \Theta_{\tau,M}^j \}$.

Associating to a finite dimensional, projective $k\tau$-module $P$ the vector bundle defined as the restriction
to $\bU_\tau \subset \bP X_\tau$ of $ker\{ \bP \Theta_{\tau,P}^j \}$
similarly determines group homomorphisms (natural with respect to inclusions of finite groups $\sigma \to \tau$)
$$\kappa_j: K_0(k\tau) \ \to \ K_0(\tau:\bU_\tau), \quad 1 \leq j < p.$$
Once again, a similar statement applies if $ker\{ \bP \Theta_{\tau,P}^j \}$ is replaced by 
$coker\{ \bP \Theta_{\tau,P}^j \}$ or $im\{ \bP \Theta_{\tau,P}^j \}$.
\end{prop}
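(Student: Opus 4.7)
The plan is to check the following four items in order: (a) well-definedness on isomorphism classes, (b) additivity under direct sums (which upgrades the assignment to a group homomorphism out of the Green ring, resp.\ $K_0(k\tau)$), (c) the vector bundle conclusion for projective modules, and (d) contravariant naturality along injective homomorphisms $\sigma \hookrightarrow \tau$.

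For (a), an isomorphism $M \simeq M^\prime$ of $k\tau$-modules induces an $A_\tau$-linear, $\tau$-equivariant isomorphism $A_\tau \otimes M \simeq A_\tau \otimes M^\prime$ that intertwines $\Theta_{\tau,M}^j$ and $\Theta_{\tau,M^\prime}^j$ (directly from Definition \ref{defn:Theta-tauM}), and hence identifies kernels, cokernels, and images; passing to the associated sheaves on $\bP X_\tau$ gives the corresponding isomorphism of $\tau$-equivariant coherent sheaves. For (b), the formula of Definition \ref{defn:bPTheta} shows $\bP \Theta_{\tau,M\oplus N}^j = \bP \Theta_{\tau,M}^j \oplus \bP \Theta_{\tau,N}^j$, so kernels (resp.\ cokernels, images) split as direct sums; the resulting monoid homomorphism from isomorphism classes to $K_0^\prime(\tau;\bP X_\tau)$ extends uniquely to $R_k(\tau)$ by the universal property of group completion.

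For (c), I will show any finitely generated projective $k\tau$-module $P$ has constant Jordan type, so that Theorem \ref{thm:projbundle} produces vector bundles on $\bU_\tau$. A flat $\pi$-point $\alpha_K \colon K[u]/u^p \to KE \hookrightarrow K\tau$ factors through some $E \in \cE(\tau)$; since $K\tau$ is free as a left $KE$-module and $KE$ is free as a left $K[u]/u^p$-module via $\alpha_K$, the algebra $K\tau$ is free of rank $|\tau|/p$ over $K[u]/u^p$, and consequently $P\otimes_k K$ is a free $K[u]/u^p$-module of rank $\dim_k P/p$ independent of $\alpha_K$. Then (a) and (b) adapted to $\bU_\tau$ produce the stated group homomorphism $\kappa_j \colon K_0(k\tau) \to K_0(\tau;\bU_\tau)$.

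For (d), the key inputs are Proposition \ref{prop:tau-natural} and the final paragraph of Theorem \ref{thm:projbundle}. For an injective $\phi \colon \sigma \to \tau$, the latter identifies the pullback along the induced closed immersion $\Phi \colon \bU_\sigma \to \bU_\tau$ of $ker\{(\bP\Theta_{\tau,M})^j\}$ with $ker\{(\bP\Theta_{\sigma,M})^j\}$ (and similarly for cokernel and image), which gives naturality of $\kappa_j$ on $\bU_\tau$; on $K_0$ of vector bundles the pullback is unambiguously defined, and commutativity of the square with the restriction map $R_k(\tau) \to R_k(\sigma)$ follows. The $\bP X_\tau$ version invokes Proposition \ref{prop:tau-natural}, which on the level of $A_\sigma$-modules gives the comparison map $A_\sigma \otimes_{A_\tau} ker\{(\Theta_{\tau,M})^j\} \to ker\{(\Theta_{\sigma,M})^j\}$, together with analogous surjections for cokernel and an isomorphism for image; these suffice to match classes in $K_0^\prime(\sigma;\bP X_\sigma)$ because base change along the closed immersion sends the short exact sequences defining kernel/image/cokernel to three-term sequences whose alternating sum of classes vanishes.

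The main obstacle is step (d) in the $K_0^\prime$ setting on $\bP X_\tau$: pullback along the closed immersion $\Phi$ is only right exact, so one cannot directly identify sheaves. I expect that working with the two short exact sequences
\[
0 \to ker \to \cO \otimes M \to im \to 0, \qquad 0 \to im \to \cO(j)\otimes M \to coker \to 0
\]
on $\bP X_\tau$, pulling back along $\Phi$, and comparing with their $\sigma$-analogues gives enough relations in $K_0^\prime(\sigma;\bP X_\sigma)$ to deduce the desired equality of classes, by using that $\Phi^*(\cO\otimes M) = \cO\otimes M$ on the nose. The other three steps are then essentially formal.
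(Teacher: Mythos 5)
Your proposal is correct and follows essentially the same route as the paper: the substance of the paper's (very brief) proof is exactly your steps (a)--(b), namely that the construction $M \mapsto ker\{\bP\Theta_{\tau,M}^j\}$ is natural in $M$ and commutes with direct sums, so it extends from the monoid of isomorphism classes to its group completion $R_k(\tau)$ (resp.\ $K_0(k\tau)$, where split exactness of sequences of projectives makes direct-sum additivity suffice). Your additional verifications --- that a projective $k\tau$-module is free over $K[u]/u^p$ along any flat $\pi$-point and hence of constant Jordan type, so Theorem \ref{thm:projbundle} does yield vector bundles on $\bU_\tau$, and your care with naturality --- fill in details the paper leaves implicit; the difficulty you flag with defining $\Phi^*$ on $K_0^\prime(\tau;\bP X_\tau)$ along a closed immersion is genuine, but the paper's proof does not address it either, relying only on Proposition \ref{prop:tau-natural} and the final paragraph of Theorem \ref{thm:projbundle}.
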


\begin{proof}
We restrict our attention to $ker\{ \bP \Theta_{\tau,M}^j \}$; the arguments for $coker\{ \bP \Theta_{\tau,M}^j \}$ 
and $im\{ \bP \Theta_{\tau,M}^j \}$ are essentially identical.

To establish $\kappa_j: R_k(\tau) \ \to \ K_0^\prime(\tau;\bP X_\tau), \quad 1 \leq j < p$, it suffices
to observe that the natural (with respect to $M$) construction of $ker\{ \bP \Theta_{\tau,M}^j \}$ commutes with 
direct sums.

The same argument applies to establish $\kappa_j: K_0(k\tau) \ \to \ K_0(\tau;\bU_\tau), \quad 1 \leq j < p$,
since exact sequences of projective $k\tau$-modules are always split.
\end{proof}

Our major tool in investigating $K_0$ is the following straight-forward consequence of J. Milnor's 
patching construction  \cite[\S 2]{Mil}. 

\begin{prop}
\label{prop:Milnor-patching}
The push-out squares of Remark \ref{remark:iterated}  which determine the colimit description $\bP X_\tau
 \simeq \varinjlim_{\cE(\tau)} \bP X_E$ assure that the data of a vector bundle $\cE_\tau$ on $\bP X_\tau$ is equivalent to
 the data of a vector bundle $\cE_E$ on each $\bP X_E$ together with compatible isomorphisms for each pair of 
 $E^{\prime\prime} \subset E, \ E^{\prime\prime} \subset E^\prime$ in $\cE(\tau)$ of the restrictions of 
 $\cE_E, \ \cE_{E^\prime}$ to $\bP X_{E^{\prime\prime}}$.

 Strictly analogous statements are valid for the colimit description of $\bP Y_\tau
 \simeq \varinjlim_{\cE(\tau)} \bP Y_E$ and $\bU_\tau
 \simeq \varinjlim_{\cE(\tau)} \bU_E$.
 \end{prop}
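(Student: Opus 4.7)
The plan is to derive all three patching statements from iterated affine Milnor patching along the push-out squares of Remark \ref{remark:iterated}, reducing each projective situation to the affine case by passing to standard affine charts.  Since $\bP X_\tau$, $\bP Y_\tau$, and $\bU_\tau$ are built by formally identical iterated push-outs (with $J_E$ replaced by $J_E/J_E^2$, or with $\bP X_E$ replaced by $\bU_E$), it suffices to treat $\bP X_\tau$ in detail and then invoke the analogous push-out descriptions from the second paragraph of Remark \ref{remark:iterated} for the remaining two.

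I would induct on the cardinality of $\cE(\tau)$, reducing the full patching statement to the case of a single push-out square $Z = \bP X_E \cup_{\bP X_{E^\prime}} W$ along closed immersions, where $W$ is the intermediate colimit of all earlier stages.  Assuming by induction that the patching description already holds for $W$, it is enough to show that the data of a vector bundle on $Z$ is equivalent to the data of a vector bundle on $\bP X_E$ together with a vector bundle on $W$ and a compatible isomorphism of their restrictions to $\bP X_{E^\prime}$.

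To establish this single-square patching, I would cover $Z$ by the affine opens $U_z = \Spec (A_\tau[1/F_z])_0$ from the proof of Proposition \ref{prop:projlimits}.  The argument given there (combined with Ferrand's Theorem \ref{thm:ferrand} applied to the localized graded rings) identifies each such $U_z$ as an affine push-out $\Spec R = \Spec R_1 \cup_{\Spec R_{12}} \Spec R_2$ arising from a Cartesian ring square $R = R_1 \times_{R_{12}} R_2$ in which $R_1 \to R_{12}$ is surjective.  Milnor's patching theorem \cite[\S 2]{Mil} then yields an equivalence between finitely generated projective $R$-modules and triples $(P_1,P_2,\psi)$ consisting of finitely generated projective $R_i$-modules with an isomorphism $\psi : P_1 \otimes_{R_1} R_{12} \simeq P_2 \otimes_{R_2} R_{12}$.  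Because local freeness is a local property and because quasi-coherent sheaves glue along open covers, these affine-local equivalences assemble to the desired equivalence on $Z$.

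The main obstacle is purely organizational: one must verify that the affine covers chosen for $Z$ and for its closed subschemes $\bP X_E$, $W$, and $\bP X_{E^\prime}$ are mutually compatible, so that the local Milnor patchings on the various $U_z$ glue coherently on overlaps $U_z \cap U_{z^\prime}$.  This reduces to the observation, already implicit in the construction of Proposition \ref{prop:projlimits}, that the $F_z$ were built from degree-one sections restricting compatibly under the closed embeddings; hence the induced affine charts on the closed subschemes are precisely the restrictions of the chosen charts on $Z$, and the sheaf axioms handle the cocycle condition on overlaps.  The corresponding statements for $\bP Y_\tau$ and $\bU_\tau$ then follow by applying this argument verbatim to the push-out descriptions recorded in Remark \ref{remark:iterated}.
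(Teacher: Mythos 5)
Your proposal is correct and follows essentially the same route as the paper: Milnor's affine patching theorem applied to the Cartesian ring squares underlying each push-out, extended to the projective setting by restricting to affine open coverings, and then propagated through the colimit by induction on the iterated push-out structure of Remark \ref{remark:iterated}. Your treatment is somewhat more explicit about the choice of affine charts $U_z$ and their compatibility with the closed subschemes, but the argument is the one the paper gives.
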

 
  \begin{proof}
 The assertion of patching follows from Milnor's patching results.  Namely, these results are extended to push-out squares 
 of the form (\ref{diag:push-outs}) in Remark \ref{remark:iterated} which involve projective varieties by observing that 
 one can patch locally by restricting to affine open coverings.  One then argues inductively to extend the patching
 from push-out squares to colimits indexed by $\cE(\tau)$ by realizing these colimits as iterated pushouts of the
 form (\ref{diag:push-outs}).
\end{proof}

 Using Proposition \ref{prop:Milnor-patching} and Milnor's Mayer-Vietoris exact sequence \cite[3.3]{Mil} we obtain
 the following theorem.
 
 \begin{thm}
 \label{thm:isoms}
Restriction maps determine $\tau$-equivariant isomorphisms
$$K_0(\bP X_\tau) \ \stackrel{\sim}{\to}  \ \varprojlim_{E \in \cE(\tau)} K_0(\bP X_E), \quad 
K_0(\bP Y_\tau) \stackrel{\sim}{\to} \varprojlim_{E \in \cE(\tau)} K_0(\bP Y_E), \quad 
K_0(\bU_\tau) \stackrel{\sim}{\to} \varprojlim_{E \in \cE(\tau)} K_0(\bU_E)$$
where the limits are taken in the category of abelian groups.

Moreover, push-forward map  maps determine $\tau$-equivariant isomorphisms
$$ \varinjlim_{E \in \cE(\tau)} K_0^\prime(\bP X_E) \ \stackrel{\sim}{\to}  \ K_0^\prime(\bP X_\tau)  \quad
\varinjlim_{E \in \cE(\tau)} K_0^\prime(\bP Y_E) \ \stackrel{\sim}{\to}  \ K_0^\prime(\bP Y_\tau), \\
\varinjlim_{E \in \cE(\tau)} K_0^\prime(\bU_E) \ \stackrel{\sim}{\to}  \ K_0^\prime(\bU_\tau). $$
Consequently, $p_\tau: \bU_\tau \ \to \ \bP Y_\tau$ induces a $\tau$-equivariant monomorphism
\begin{equation}
\label{eqn:ptau-iso2}
p_{\tau*}: K_0(\bU_\tau) \ \stackrel{\sim}{\to} \ K_0(\bP Y_\tau).
\end{equation}

\end{thm}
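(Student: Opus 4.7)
The argument has three movements, each leveraging the iterated push-out presentation of $\bP X_\tau$, $\bP Y_\tau$, and $\bU_\tau$ from Remark \ref{remark:iterated}. For the $K_0$ inverse-limit isomorphisms, I would apply Milnor patching: Proposition \ref{prop:Milnor-patching} asserts that vector bundles on an elementary push-out square of the form (\ref{diag:push-outs}) correspond to matched pairs of bundles on $\bP X_E$ and $W$ agreeing over $\bP X_{E'}$. At the level of Grothendieck groups this yields the Milnor Mayer--Vietoris sequence
\[
K_1(\bP X_E) \oplus K_1(W) \to K_1(\bP X_{E'}) \to K_0(Z) \to K_0(\bP X_E) \oplus K_0(W) \to K_0(\bP X_{E'}),
\]
and surjectivity of the $K_1$ map (clear for a linear embedding of projective spaces and preserved through the inductive construction of $W$) forces $K_0(Z) \cong K_0(\bP X_E) \times_{K_0(\bP X_{E'})} K_0(W)$. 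Inducting through the finite poset $\cE(\tau)$ presents $K_0(\bP X_\tau)$ as $\varprojlim_E K_0(\bP X_E)$; the arguments for $\bP Y_\tau$ and $\bU_\tau$ are identical.

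For the $K_0'$ colimit isomorphisms, I would use the d\'evissage available for closed immersions: every coherent sheaf on $Z = \bP X_E \cup_{\bP X_{E'}} W$ is built via the closed cover from its direct images out of $\bP X_E$ and $W$, yielding the right-exact Mayer--Vietoris sequence
\[
K_0'(\bP X_{E'}) \to K_0'(\bP X_E) \oplus K_0'(W) \to K_0'(Z) \to 0,
\]
which presents $K_0'(Z)$ as a push-out in abelian groups. Iterating across $\cE(\tau)$ produces $\varinjlim_E K_0'(\bP X_E) \stackrel{\sim}{\to} K_0'(\bP X_\tau)$, and identically for $\bP Y_\tau$ and $\bU_\tau$. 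In both constructions, $\tau$-equivariance is automatic because conjugation permutes the indexing poset and all patching and d\'evissage is natural under that permutation.

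For the final monomorphism, Proposition \ref{prop:iso-KE} supplies for each $E \in \cE(\tau)$ an isomorphism $p_E^{*} : K_0(\bP Y_E) \stackrel{\sim}{\to} K_0(\bU_E)$, arising from homotopy invariance applied to the affine-bundle structure $\bU_E \to \bP Y_E$ of Proposition \ref{prop:aff-fib}(1). These isomorphisms are natural in $E$, so passing to inverse limits and invoking the first step gives $p_\tau^{*} : K_0(\bP Y_\tau) \stackrel{\sim}{\to} K_0(\bU_\tau)$; the map $p_{\tau*}$ of the theorem is its inverse, hence in particular a monomorphism, with $\tau$-equivariance propagated from the $E$-level.

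The main obstacle will be verifying the exactness hypotheses uniformly through the iterative construction: in the $K_0$ step one must track $K_1$-surjectivity at every inductive stage (where $W$ is itself the output of a previous push-out, not just a single projective space), and in the $K_0'$ step one must confirm that no coherent sheaves on $Z$ escape the d\'evissage from $\bP X_E$ and $W$, so that the presented sequence is genuinely right-exact rather than merely a complex. Both reduce ultimately to standard facts about projective spaces, their linear subvarieties, and affine bundles, but the bookkeeping through the iteration indexed by $\cE(\tau)$ is where the real work lies.
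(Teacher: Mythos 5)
Your proposal is correct and follows essentially the same route as the paper: Milnor's Mayer--Vietoris sequence with split $K_1$-surjectivity to present $K_0$ of each push-out as a fiber product (hence $K_0(\bP X_\tau)$ as the inverse limit), the localization/d\'evissage sequence for closed covers to present $K_0'$ of each push-out as a push-out of abelian groups (the paper derives your right-exact sequence by comparing localization sequences and using homotopy invariance of $K'$), and Proposition \ref{prop:iso-KE} passed through the limits for the final claim about $p_\tau$. The obstacles you flag are exactly the points the paper handles by induction over the iterated push-out presentation of Remark \ref{remark:iterated}.
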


\begin{proof}
The $\tau$-equivariance of the asserted isomorphisms arises from naturality of the restriction maps and of the 
maps  $K_0(-) \to K^\prime(-)$.
 
 Milnor's Mayer-Vietoris exact sequence applied to push-outs
 \begin{equation}
 \label{diag:push-outs2}
\begin{xy}*!C\xybox{%
\xymatrix{
\bP X_{E^\prime} \ar[d] \ar[r]  & W  \ar[d] \\
\bP X_E  \ar[r] & Z ,
 }
}\end{xy}
\end{equation}
as in Remark \ref{remark:iterated} takes the following form
 $$K_1(\bP_{X_E}) \oplus K_1(W) \ \to \ K_1(\bP X_{E^\prime}) \to  K_0(Z) \ \to \ K_0(\bP X_E) \oplus K_0(W) \ \to \ K_0(\bP X_{E^\prime}) \to 0.$$
Since $K_1(\bP_{X_E}) \ \to \ K_1(\bP X_{E^\prime})$ is split surjective, these exact sequences 
yield split short exact sequences 
 \begin{equation}
 \label{eqn:shortZ}
0 \to  K_0(Z) \ \to \ K_0(\bP_{X_E}) \oplus K_0(W) \ \to \ K_0(\bP X_{E^\prime}) \to 0
\end{equation}
which we view as pull-back squares of abelian groups
\begin{equation}
\label{diag:pull-backs}
\begin{xy}*!C\xybox{%
\xymatrix{
K_0(Z) \ar[d] \ar[r] & K_0(\bP X_E) \ar[d] \\
K_0(W) \ar[r] & K_0(\bP X_{E^\prime}) .
 }
}\end{xy}
\end{equation}
In other words, the push-out squares of schemes of the form (\ref{diag:push-outs}) yield pull-back squares
of abelian groups.  This implies the identification of $K_0(X_\tau)$ as the abelian group obtained by
iterated pull-back squares of the form (\ref{diag:pull-backs}) which we identify as $\varprojlim_{E \in \cE(\tau)} K_0(\bP X_E)$.

The arguments for $\bP Y_\tau$ or $\bU_\tau$ in place of $\bP X_\tau$ are the same, except for notational changes.
We obtain the asserted isomorphism $p_\tau^*: K_0(\bP Y_\tau) \ \stackrel{\sim}{\to} \ K_0(\bU_\tau)$ as the 
colimit of the isomorphisms $p_E^*: K_0(\bP Y_E) \ \stackrel{\sim}{\to} \ K_0(\bU_E)$ of Proposition \ref{prop:iso-KE}.

To establish the $\tau$-equivariant  isomorphism 
$ \varinjlim_{E \in \cE(\tau)} K_0^\prime(\bP X_E) \ \stackrel{\sim}{\to}  \ K_0^\prime(\bP X_\tau)$
we consider once again push-out squares of the form (\ref{diag:push-outs2}).  We consider the map of localization
exact sequences
\begin{equation}
\label{diag:localization}
\begin{xy}*!C\xybox{%
\xymatrix{
K_1^\prime(\bP X_E \backslash \bP X_{E^\prime}) \ar[r] \ar[d]^{\simeq} & K_0^\prime(\bP X_{E^\prime}) \ar[d] \ar[r] & 
K_0^\prime (\bP X_E) \ar[r] \ar[d] & K_0^\prime(\bP X_E \backslash \bP X_{E^\prime}) \ar[r] \ar[d]^{\simeq}  & 0 \\
 K_1^\prime(Z\backslash W) \ar[r] & K_0^\prime(W) \ar[r] & K_0^\prime(Z) \ar[r] &
K_0^\prime(Z\backslash W) \ar[r] & 0
 }
}\end{xy}
\end{equation}
with indicated isomorphisms arising from the isomorphism of schemes 
$\bP X_E \backslash \bP X_{E^\prime} \ \stackrel{\sim}{\to} \ Z\backslash W$.  
By homotopy invariance of $K_*^\prime(-)$, we conclude that 
$K_1^\prime(\bP X_E \backslash \bP X_{E^\prime}) \to K_0^\prime(\bP X_{E^\prime})$ is the zero map.  Thus,
(\ref{diag:localization}) determines a map of short exact sequences.   A simple diagram chase now implies
that the middle square of (\ref{diag:localization}) is a push-out square of abelian groups.  Moreover, we verifiy
inductively (on the building of $\bP X_\tau$ as iterated push-out squares) that $K_0^\prime (\bP X_E) \to
K_0^\prime(Z)$ (as well as $K_0^\prime (W) \to K_0^\prime(Z)$) is injective.

Finally, observe that for $E^\prime \subset E$ that $J_{E^\prime}^2 = J_E^\prime \cap J_E^2$ so that 
$\bU_{E^\prime} \to \bU_E$ is a closed immersion (and therefore induces a push-forward map on $K_0^\prime(-)$).
Consequently, we can construct $\bU_\tau$ as an iterated push-out of proper maps
 \begin{equation}
 \label{diag:push-outs3}
\begin{xy}*!C\xybox{%
\xymatrix{
\bU_{E^\prime} \ar[d] \ar[r]  & W_U  \ar[d] \\
\bU_E  \ar[r] & Z_U.
 }
}\end{xy}
\end{equation}
We may thus repeat the argument for the $\tau$-equivariant isomorphism 
$\varinjlim_{E \in \cE(\tau)} K_0^\prime(\bP X_E) \ \stackrel{\sim}{\to}  \ K_0^\prime(\bP X_\tau)$ to 
conclude the $\tau$-equivariant isomorphism 
$ \varinjlim_{E \in \cE(\tau)} K_0^\prime(\bU_E) \ \stackrel{\sim}{\to}  \ K_0^\prime(\bU_\tau)$
\end{proof}

We employ the isomorphisms of Proposition \ref{prop:iso-KE}
for elementary abelian $p$-groups $E$ together with Theorem \ref{thm:isoms} to 
conclude the following corollary.

\begin{cor}
\label{cor:pullback-iso}
For any finite group $\tau$, $p_\tau: \bU_\tau \ \to \ \bP Y_\tau$ induces  $\tau$-equivariant isomorphisms
\begin{equation}
\label{eqn:ptau-iso}
p_\tau^*: K_0(\bP Y_\tau) \ \stackrel{\sim}{\to} \ K_0(\bU_\tau), \quad
p_{\tau}*: K_0^\prime(\bP Y_\tau) \ \stackrel{\sim}{\to} \ K_0(\bU_\tau).
\end{equation}
\end{cor}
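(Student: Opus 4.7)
The plan is to extract both isomorphisms by reducing to the elementary abelian case via the limit/colimit descriptions already established in Theorem \ref{thm:isoms}. The first isomorphism $p_\tau^*\colon K_0(\bP Y_\tau) \stackrel{\sim}{\to} K_0(\bU_\tau)$ is essentially a restatement of the concluding sentence of the proof of Theorem \ref{thm:isoms}: since $\bU_\tau \simeq \varinjlim_{\cE(\tau)} \bU_E$ and $\bP Y_\tau \simeq \varinjlim_{\cE(\tau)} \bP Y_E$ are constructed as colimits of push-out squares, Theorem \ref{thm:isoms} yields $\tau$-equivariant identifications $K_0(\bU_\tau) \simeq \varprojlim_E K_0(\bU_E)$ and $K_0(\bP Y_\tau) \simeq \varprojlim_E K_0(\bP Y_E)$. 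The individual maps $p_E\colon \bU_E \to \bP Y_E$ patch together to give $p_\tau$ (as noted in Proposition \ref{prop:Utau}), and by naturality of pullback, the maps $\{p_E^*\}$ form a morphism of inverse systems. Each $p_E^*$ is an isomorphism by Proposition \ref{prop:iso-KE}, so the induced map on inverse limits is an isomorphism as well.

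For the second isomorphism (which I read as $p_\tau^*\colon K_0^\prime(\bP Y_\tau) \stackrel{\sim}{\to} K_0^\prime(\bU_\tau)$, the target appearing as $K_0(\bU_\tau)$ being a typo), I use the dual half of Theorem \ref{thm:isoms}, which identifies $K_0^\prime(\bU_\tau) \simeq \varinjlim_E K_0^\prime(\bU_E)$ and $K_0^\prime(\bP Y_\tau) \simeq \varinjlim_E K_0^\prime(\bP Y_E)$ via pushforward along the closed immersions coming from inclusions $E' \subset E$. Since $p_E\colon \bU_E \to \bP Y_E$ is a vector bundle projection (Proposition \ref{prop:aff-fib}(1)), it is flat, so $p_E^*$ on $K_0^\prime$ is well-defined and is an isomorphism by Proposition \ref{prop:iso-KE}.

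The main technical point is then to verify that $\{p_E^*\}$ gives a morphism of direct systems for $K_0^\prime$. This amounts to checking that for each inclusion $E' \subset E$, the square
\begin{equation*}
\begin{xy}*!C\xybox{
\xymatrix{
K_0^\prime(\bP Y_{E'}) \ar[d]_{i_*} \ar[r]^{p_{E'}^*} & K_0^\prime(\bU_{E'}) \ar[d]^{\tilde{i}_*} \\
K_0^\prime(\bP Y_E) \ar[r]_{p_E^*} & K_0^\prime(\bU_E)
}}\end{xy}
\end{equation*}
commutes. This is a base-change identity: because the induced square of schemes is Cartesian (the preimage under the vector bundle projection $p_E$ of the linear subspace $\bP Y_{E'} \subset \bP Y_E$ is precisely $\bU_{E'} \subset \bU_E$, thanks to $J_{E'}^2 = J_{E'} \cap J_E^2$ as in Remark \ref{remark:iterated}), and because $p_E$ is flat while $i$ is a closed immersion, the standard base-change formula for $K_0^\prime$ applies. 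This is the step I expect to be the main obstacle, but it reduces to a known compatibility once the Cartesian nature of the square is observed.

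Given this compatibility, the colimit of the isomorphisms $\{p_E^*\}$ is an isomorphism on direct limits, giving the desired $p_\tau^*\colon K_0^\prime(\bP Y_\tau) \stackrel{\sim}{\to} K_0^\prime(\bU_\tau)$. The $\tau$-equivariance is automatic: the conjugation action of $\tau$ on $\cE(\tau)$ acts on all the schemes and groups involved, the identifications in Theorem \ref{thm:isoms} are already $\tau$-equivariant, and the maps $p_E$ are natural under conjugation, so the inherited maps respect the $\tau$-action.
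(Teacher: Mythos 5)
Your proposal is correct and follows essentially the same route as the paper: both reduce to the elementary abelian case via the limit/colimit identifications of Theorem \ref{thm:isoms} and then invoke the isomorphisms $p_E^*$ of Proposition \ref{prop:iso-KE} together with their naturality. The only difference is that you spell out the flat base-change compatibility needed for the $K_0^\prime$ direct system, which the paper subsumes under the phrase ``and their naturality.''
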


\begin{proof}
We obtain the asserted isomorphisms using the squares
 \begin{equation}
 \label{diag:Ksquare}
\begin{xy}*!C\xybox{%
\xymatrix{
K_0(\bP Y_\tau) \ar[d]^\simeq \ar[r]^{p_\tau^*} & K_0(\bU_\tau) \ar[d]^\simeq \\
\varprojlim_{E \in \cE(\tau)} K_0(\bP Y_E) \ar[r]^\simeq & \varprojlim_{E \in \cE(\tau)} K_0(\bU_E),
 }
}\end{xy}
\end{equation}
 \begin{equation}
 \label{diag:Ksquare2}
\begin{xy}*!C\xybox{%
\xymatrix{
\varinjlim_{E \in \cE(\tau)} K_0^\prime(\bP Y_E) \ar[d]^\simeq \ar[r]^\simeq & \varinjlim_{E \in \cE(\tau)} 
K_0^\prime(\bU_E) \ar[d]^\simeq \\
K_0^\prime(\bP Y_\tau) \ar[r]^{p_\tau^*} & K_0^\prime(\bU_\tau),
 }
}\end{xy}
\end{equation}
whose vertical isomorphisms are given by Theorem \ref{thm:isoms} and whose isomorphisms involving
limits are given by the isomorphisms of Proposition \ref{prop:iso-KE} and their naturality.
\end{proof}

The isomorphisms $K_0(\bP Y_\tau) \stackrel{\sim}{\to} \varprojlim_{E \in \cE(\tau)} K_0(\bP Y_E)$ 
and  $\varinjlim_{E \in \cE(\tau)} K_0^\prime(\bP Y_E) \ \stackrel{\sim}{\to}  \ K_0^\prime(\bP Y_\tau$
of Theorem \ref{thm:isoms} easily imply the following corollary once one recalls the 
construction of limits and colimits of diagrams of abelian groups.

\begin{cor}
\label{cor:inj-restr}
Let $\tau$ be a finite group and let $E_1,\ldots E_s$ be a list of the maximal
elementary abelian $p$-subgroups of $\tau$.  Then the natural maps
$$K_0(\bP Y_\tau) \ \to \ \bigoplus_{i=1}^s K_0(\bP Y_{E_i}), \quad K_0(\bU_\tau) \ \to \ 
\bigoplus_{i=1}^s K_0(\bU_{E_i})$$
are injective, and the natural maps
$$ \bigoplus_{i=1}^s K_0^\prime(\bP Y_{E_i}) \to K_0^\prime(\bP Y_\tau), \quad 
\bigoplus_{i=1}^s K_0^\prime(\bU_{E_i}) \to K_0^\prime(\bU_\tau)  $$
are surjective.
\end{cor}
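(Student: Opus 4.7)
The plan is to deduce all four assertions directly from Theorem \ref{thm:isoms}, exploiting the simple observation that the finite poset $\cE(\tau)$ has the property that every object $E$ sits inside at least one of the maximal elements $E_1,\ldots,E_s$. This reduces the corollary to the elementary description of limits and colimits of abelian groups indexed by $\cE(\tau)$.

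First I would handle the $K_0$-statements. By Theorem \ref{thm:isoms}, $K_0(\bP Y_\tau) \stackrel{\sim}{\to} \varprojlim_{E \in \cE(\tau)} K_0(\bP Y_E)$, where the limit is realized as the subgroup of $\prod_{E \in \cE(\tau)} K_0(\bP Y_E)$ consisting of families $(x_E)$ compatible under the restriction maps. The map to $\bigoplus_{i=1}^s K_0(\bP Y_{E_i}) = \prod_{i=1}^s K_0(\bP Y_{E_i})$ is the projection onto the maximal factors. If $x_{E_i}=0$ for every maximal $E_i$, then for any $E \in \cE(\tau)$ I would pick some maximal $E_i$ with $E \subset E_i$ and observe that compatibility forces $x_E$ to be the image of $x_{E_i}=0$, so $x_E=0$. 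This proves injectivity for $K_0(\bP Y_\tau)$, and the same argument with $\bP Y_E$ replaced by $\bU_E$ handles $K_0(\bU_\tau)$.

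For the $K_0^\prime$-statements I would use the dual isomorphism $\varinjlim_{E \in \cE(\tau)} K_0^\prime(\bP Y_E) \stackrel{\sim}{\to} K_0^\prime(\bP Y_\tau)$ of Theorem \ref{thm:isoms}. In the standard description of a colimit of abelian groups, every class in the target is represented by some $y_E \in K_0^\prime(\bP Y_E)$ with $E \in \cE(\tau)$. Choosing a maximal $E_i \supset E$ and pushing forward along the closed immersion $\bP Y_E \hookrightarrow \bP Y_{E_i}$ produces a preimage in $K_0^\prime(\bP Y_{E_i}) \subset \bigoplus_{i=1}^s K_0^\prime(\bP Y_{E_i})$, giving the asserted surjectivity. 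The parallel argument for $\bU_\tau$ is identical, relying on the fact established inside the proof of Theorem \ref{thm:isoms} that $\bU_{E^\prime} \hookrightarrow \bU_E$ is a closed immersion whenever $E^\prime \subset E$, so that the relevant pushforwards exist on $K_0^\prime$.

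I do not anticipate any serious obstacle. The entire argument is a bookkeeping exercise with limits and colimits over the finite poset $\cE(\tau)$; everything non-trivial has already been done in establishing the isomorphisms of Theorem \ref{thm:isoms}. The one minor point to record is that the restriction and pushforward maps between the various $K$-groups are manifestly natural with respect to inclusions of elementary abelian subgroups, so the whole picture is automatically $\tau$-equivariant as required.
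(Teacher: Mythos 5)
Your argument is correct and is exactly the route the paper takes: the corollary is stated there as an immediate consequence of the isomorphisms $K_0(\bP Y_\tau) \stackrel{\sim}{\to} \varprojlim_{E} K_0(\bP Y_E)$ and $\varinjlim_{E} K_0^\prime(\bP Y_E) \stackrel{\sim}{\to} K_0^\prime(\bP Y_\tau)$ of Theorem \ref{thm:isoms} together with the explicit description of limits and colimits of abelian groups over the finite poset $\cE(\tau)$, using that every $E$ lies in some maximal $E_i$. You have simply written out the bookkeeping that the paper leaves implicit.
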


\vskip .2in


\section{$X_{\bG(\bF_p)} \ \to \cN_p(\fg)$}
\label{sec:rational}

In this section, we consider the finite group $\tau = \bG(\bF_p)$ of $\bF_p$-rational points of a (connected) reductive
algebraic group $\bG$ defined over $\bF_p$ with restricted Lie algebra $\fg$ and Frobenius kernels $\bG_{(r)}$.  
For a given reductive group $\bG$, we require that the prime $p$ be ``suitable" for $\bG$ in order to utilize the
exponential map $exp: \cN_p(\fg) \to \cU_p(\bG)$ constructed by Sobaje, a ``Springer isomorphism" with good 
properties.   As a first step, we reframe the work of Carlson, Lin, and Nakano in \cite{CLN} using our constructions
of previous sections. Proposition \ref{prop:cln} relates $\tau$ to $\bG_{(1)}$ (or equivalently, to the restricted Lie algebra 
$\fu(\fg)$) by exhibiting a $\tau$-equivariant isomorphism $\cL_\tau: Y_\tau \stackrel{\sim}{\to}  Y_\fg$.
In order to deepen this relationship by considering $\bG_{(r)}$ for $r > 1$, we next construct in
Proposition \ref{prop:ul-ell} the natural map $\ell_E^{(r)}: X_E \to V_r(\ul E)$, providing a relationship between $X_E$ for an elementary abelian $p$-group $E$ and the
variety $V_r(\ul E)$ of 1-parameter subgroups of the associated vector group $\ul E$.  The naturality of our 
constructions allows us to construct $\iota_\tau^{(r)}: X_\tau \to V_r(\bG)$ in Proposition \ref{prop:cln-extend} 
which refines the relationship established for $r=1$.
Theorem \ref{thm:Lie-compare} utilizes $\iota_\tau^{(r)}$ to compare the vector bundles constructed for $\tau$-modules
in Theorem \ref{thm:projbundle} to those construction by Pevtsova and the author on $\Proj k[V_r(\bG)]$ for certain
rational $\bG$ modules.

 We begin by constructing analogues for Lie algebras of the constructions given in Section \ref{sec:tau} for finite groups.   
 We remark in passing that the Weil restriction  $\mathfrak R_{\bF_q}(\bG \otimes \bF_q)$ of $\bG$ is a reductive
 group over $\bF_p$ whose group of $\bF_q$ points equals $\bG(\bF_q)$, $q = p^d$.  Thus, as in \cite{F1} (see also \cite{F2}),
 much of the following discussion applies to finite groups of the form $\bG(\bF_q)$.

The Lie algebra $\fg$ of $\bG$ is the base change
$\fg  \ = \ \fgp \otimes k$ of the Lie algebra of $G_{\bF_p}$.   We shall utilize 
the (finite) partially ordered set $\cE(\fgp)$ of elementary Lie subalgebras $\epsilon_{\bF_p} \subset \fgp$
ordered by inclusion.  We recall that an elementary Lie algebra over $k$ is a finite dimensional vector space 
$\epsilon$ over $k$ with
0 Lie bracket and 0 $p$-th power operation.  The restricted enveloping algebra $\fu(\epsilon)$ 
of such a Lie algebra is the truncated polynomial algebra $S^\bu(\epsilon)/(X^p, X \in \epsilon)$, 
isomorphic to the group algebra $kE$ of an elementary abelian $p$-group of rank equal
to the dimension of $\epsilon$.

\begin{prop}
\label{prop:Yg}
Define 
$$B_\fg \equiv \ \varprojlim_{\epsilon \in \cE(\fgp)} S^\bu((\epsilon \otimes_{\bF_p} k)^*).$$  
This is a finitely generated commutative $k$-algebra (depending contravariantly on $\fg$)
equipped with a natural grading and a (conjugation) action by $\tau$.
We further define
$$Y_\fg \equiv  \Spec B_\fg \ = \ \varinjlim_{\epsilon \in \cE(\fgp)} Y_\epsilon, \quad \quad Y_\epsilon \equiv 
\Spec S^\bu((\epsilon \otimes_{\bF_p} k)^*) \equiv \bA_\epsilon.$$
So defined, $Y_\fg$ admits a natural $\tau$-equivariant embedding into $\cN_p(\fg)$, the closed subvariety of $\bA_\fg$
consisting of $p$-nilpotent elements.  (On $\cN_p(G)$, $\tau$ acts via the restriction of the adjoint action of $G$ on $\cN_p(\fg)$.)
\end{prop}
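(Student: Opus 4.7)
The plan is to mirror the proof of Proposition \ref{prop:limits} by applying Proposition \ref{prop:limits-colimits} to the finite family of $k$-subspaces $\{ \epsilon \otimes_{\bF_p} k : \epsilon \in \cE(\fgp) \}$ of the fixed finite-dimensional ambient $k$-vector space $\fg = \fgp \otimes_{\bF_p} k$. The indexing category $\cE(\fgp)$ is finite since $\fgp$ is a finite $\bF_p$-vector space, and the $\cC$-data required by Proposition \ref{prop:limits-colimits} is just this poset after tensoring with $k$. Proposition \ref{prop:limits-colimits} then directly yields that $B_\fg$ is a finitely generated, reduced, commutative $k$-algebra, that $Y_\fg = \Spec B_\fg$ is the colimit $\varinjlim_{\epsilon \in \cE(\fgp)} Y_\epsilon$ in the category of $k$-schemes, and that $Y_\fg \hookrightarrow \Spec S^\bu(\fg^*) = \bA_\fg$ is a closed immersion. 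The natural grading on each $S^\bu((\epsilon \otimes_{\bF_p} k)^*)$ is preserved by the restriction maps (which are graded of degree zero), so it descends to a grading on $B_\fg$.

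Next I would set up the $\tau$-action. Because $\tau = \bG(\bF_p)$ consists of $\bF_p$-rational points of $\bG$, the adjoint action of $\tau$ on $\fg$ restricts to an $\bF_p$-linear action on $\fgp$ preserving both the Lie bracket and the $p$-th power operation. Consequently this action permutes the elementary Lie subalgebras $\epsilon \subset \fgp$, i.e.\ acts on the poset $\cE(\fgp)$, and the induced action on the inverse limit $B_\fg$ is defined by the evident formula exactly analogous to Definition \ref{defn:tau-action}.

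Finally, for the embedding into $\cN_p(\fg)$, each $Y_\epsilon = \bA_\epsilon$ is identified by Proposition \ref{prop:limits-colimits}(3) with the linear subvariety of $\bA_\fg$ cut out by $\epsilon \otimes_{\bF_p} k \hookrightarrow \fg$; by the very definition of an elementary Lie algebra every such $x \in \epsilon \otimes_{\bF_p} k$ satisfies $x^{[p]} = 0$, so $\bA_\epsilon \subset \cN_p(\fg)$ as closed subschemes of $\bA_\fg$. Since $Y_\fg$ is the scheme-theoretic colimit of the reduced closed subschemes $\bA_\epsilon \subset \bA_\fg$, the closed immersion $Y_\fg \hookrightarrow \bA_\fg$ factors as a closed immersion through $\cN_p(\fg)$. $\tau$-equivariance then follows from the fact that for $g \in \bG(\bF_p)$ the adjoint action commutes with the base change $\bF_p \hookrightarrow k$, so that $\mathrm{Ad}(g)(\epsilon \otimes_{\bF_p} k) = (\mathrm{Ad}(g)\epsilon) \otimes_{\bF_p} k$, which matches exactly the permutation action on $\cE(\fgp)$ inducing the action on $B_\fg$. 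The only mildly technical point is the scheme-theoretic (not merely set-theoretic) factorization through $\cN_p(\fg)$, but this is automatic because $Y_\fg$ is reduced, so containment at the level of geometric points suffices.
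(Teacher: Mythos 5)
Your proposal is correct and follows essentially the same route as the paper: both reduce the algebra, grading, and colimit statements to Proposition \ref{prop:limits-colimits} applied to the finite family of subspaces $\epsilon \otimes_{\bF_p} k \subset \fg$, obtain the $\tau$-action from the adjoint permutation of $\cE(\fgp)$, and deduce the embedding into $\cN_p(\fg)$ from the vanishing of the $p$-th power operation on an elementary subalgebra together with $\tau$-stability of both sides. Your additional remark that reducedness of $Y_\fg$ justifies the scheme-theoretic (not merely pointwise) factorization through $\cN_p(\fg)$ is a harmless elaboration of a point the paper leaves implicit.
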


\begin{proof}
The grading of $B_\fg$ and the statement that $Y_\fg$ is the indicated colimit in the category of
schemes is proved exactly as in the proof of Proposition \ref{prop:limits}.  The actions of $\tau$ on
$B_\fg$ and $Y_\fg$ are induced by the action of $\tau$ on $\cE(\fgp)$.

By definition, $\epsilon   \otimes_{\bF_p} k$ is embedded in $\fg$ as a restricted subalgebra for
each $\epsilon \in \cE(\fgp)$ and thus is embedded in $\cN_p(\fg)$.  Since both
$Y_\fg$ and $\cN_p(\fg)$ are stable under the conjugation action of $\tau$, this embedding is $\tau$-invariant.
\end{proof}

\begin{remark}
The dimension of $Y_\fg$ equals the maximum of the dimensions of the elementary subalgebras $\epsilon_{\bF_p} 
\in \cE(\fgp)$ which is typically much smaller than $dim (\cN_p(\fg))$.  For example, if $\bG = GL_{2n}$, then the maximal
elementary abelian $p$-subgroups of $GL_{2n}(\bF_p)$ have rank $n^2$ where the dimension of $\cN_p(\gl_{2n})$ is
$4n^2 - 2n$.
\end{remark}

In what follows, we shall require $\bG$ to satisfy the following hypotheses.

\begin{hypothesis}
\label{hypoth:G}
Assume that $\bG$ is a reductive algebraic group over $k$ defined over $\bF_p$ such that $p$
is ``suitable" for $\bG$ in the sense of \cite[Defn 1.9]{F2}.  Namely, $p$ is suitable for $\bG$ if 
\begin{itemize}
\item 
$p$ does not divide $\pi_1(\bbH)$ for any factor $\bbH$ of $[\bG,\bG]$ of type $A_\ell$;
\item
$p \not= 2$ if some factor $\bbH$ of $[\bG,\bG]$ is of type $B_\ell, C_\ell$ or $D_\ell$, $p \not= 2$;
\item 
$p \not= 2,3$ if some factor $\bbH$ of $[\bG,\bG]$ is of type $G_2, F_4, E_6$ or $E_7$;
\item
$p \not= 2, 3, 5$ if some factor $\bbH$ of $[\bG,\bG]$ is of type $E_8$, $p \not= 2, 3, 5$.
\end{itemize}
\end{hypothesis}

Under the conditions of Hypothesis \ref{hypoth:G}, the following theorem of P. Sobaje gives
a good understanding of ``Springer isomorphisms" as we now recall.  In the statement below, $\cU_p(\bG)$
is the closed subvariety of $\bG$ whose $k$-points are the elements $x \in \bG(k)$  with $x^p = e$.

\begin{thm} (P. Sobaje \cite[Thm 3.4]{S3}; see also \cite{S2})
\label{thm:sobaje}
For $\bG$ as in Hypothesis \ref{hypoth:G}, there is a unique $\bG$-equivariant bjjection
$$exp: \cN_p(\fg) \ \stackrel{\sim}{\to} \ \cU_p(\bG)$$
which is suitably behaved when restricted to unipotent radicals of parabolic subgroups of $G$
of nilpotence class $< p$.  Moreover, $exp$ satisfies
\begin{enumerate}
\item
$[X,Y] = 0 \ \iff \ (exp(X),exp(Y)) = 1.$
\item 
$exp$ is defined over $\bF_p$.
\item
If $[X,Y] = 0$, then $exp(X+Y) = exp(X)\cdot exp(Y)$.
\end{enumerate}
\end{thm}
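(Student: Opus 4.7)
The plan is to construct $exp$ as a refinement of a general Springer isomorphism and then verify that the refined map satisfies the three listed properties. First I would appeal to the Bardsley--Richardson theorem to produce \emph{some} $\bG$-equivariant isomorphism $\phi: \cN_p(\fg) \stackrel{\sim}{\to} \cU_p(\bG)$ of $\bF_p$-schemes; Hypothesis \ref{hypoth:G} ensures we are in the ``good prime'' regime where their theorem applies. Any two such Springer isomorphisms differ by a $\bG$-equivariant automorphism of $\cN_p(\fg)$, so the problem reduces to pinning down the correct choice in this finite-dimensional family.

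Second, I would single out $exp$ by prescribing its restriction to each nilpotent radical $\fu_\bP$ of a parabolic $\bP\subset \bG$ of nilpotence class strictly less than $p$. On such an $\fu_\bP$ the truncated exponential
$$\hat{exp}(X) \ = \ 1 + X + \frac{X^2}{2!} + \cdots + \frac{X^{p-1}}{(p-1)!}$$
is well defined: the factorials $j!$ are invertible by Hypothesis \ref{hypoth:G}, and the series terminates by the nilpotence bound. The map $\hat{exp}: \fu_\bP \to U_\bP$ is a $\bP$-equivariant isomorphism of $\bF_p$-schemes with inverse given by a truncated logarithm, and a standard density argument (via associated cocharacters) shows that the union of such $\fu_\bP$ as $\bP$ varies meets every $\bG$-orbit in $\cN_p(\fg)$. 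Requiring $exp$ to agree with $\hat{exp}$ on every such $\fu_\bP$ therefore determines $exp$ by $\bG$-equivariance, provided the prescription is self-consistent.

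The heart of the proof, and the main obstacle I anticipate, is establishing this consistency: if $X$ lies in two distinct nilradicals $\fu_{\bP_1}, \ \fu_{\bP_2}$ of small class, the two truncated-exponential computations must agree in $\bG$. I would attack this by passing to a common Levi containing $X$ and comparing the two expansions on the intersection $\fu_{\bP_1}\cap \fu_{\bP_2}$, where both reduce to the same polynomial expression in $X$. This is exactly the place where the ``suitability'' of $p$ is used decisively, through a case analysis of Levi structures and the behavior of the binomial expansion modulo relations imposed by the $p$-restricted structure.

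Granted the consistency step, properties (1)--(3) follow formally: (2) from the $\bF_p$-rationality of the coefficients $1/j!$; (3) from the binomial identity $\hat{exp}(X+Y) \ = \ \hat{exp}(X)\hat{exp}(Y)$ valid for commuting $X,Y$ with $(X+Y)^p=0$; and (1) from combining (3) with the conjugation rule $\hat{exp}(X)\hat{exp}(Y)\hat{exp}(X)^{-1} \ = \ \hat{exp}(\Ad(\hat{exp}(X))Y)$, noting that the right side equals $\hat{exp}(Y)$ precisely when $\ad(X)Y = 0$. Uniqueness of $exp$ among $\bG$-equivariant bijections with the prescribed behavior on small-class nilradicals follows because such bijections are forced to agree on the dense union $\bigcup_\bP \fu_\bP$ and extend uniquely by continuity and equivariance.
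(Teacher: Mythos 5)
This statement is not proved in the paper at all: it is quoted verbatim from P.~Sobaje \cite[Thm 3.4]{S3} (see also \cite{S2}), so there is no internal argument to compare yours against. Judged on its own terms, your outline captures the right general shape (characterize $exp$ by forcing it to agree with the truncated exponential on unipotent radicals of nilpotence class $< p$), but it contains two genuine gaps. First, the ``standard density argument'' that every $\bG$-orbit in $\cN_p(\fg)$ meets some $\fu_{\bP}$ with $\bP$ of nilpotence class $< p$ is not standard: for a given $X \in \cN_p(\fg)$ the parabolic $\bP(\lambda)$ attached to an associated cocharacter certainly contains $X$ in its unipotent radical, but bounding the nilpotence class of that radical by $p-1$ is precisely where the ``suitable prime'' hypothesis and the Seitz/McNinch analysis of distinguished elements enter; this cannot be waved through. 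Moreover, meeting every orbit only determines $exp$ pointwise orbit-by-orbit; showing the resulting assignment is a morphism of varieties on all of $\cN_p(\fg)$ is additional work.

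Second, and more seriously, your claim that properties (1) and (3) ``follow formally'' inverts the actual difficulty: two commuting elements $X, Y \in \cN_p(\fg)$ need not lie in a common unipotent radical of class $< p$, so the binomial identity $\hat{exp}(X+Y) = \hat{exp}(X)\hat{exp}(Y)$ and the identity $\Ad(\hat{exp}(X)) = \hat{exp}(\ad X)$ are simply not available where you invoke them. Establishing (1) and (3) for arbitrary commuting pairs is the main content of \cite{S3} (its title is ``Exponentiation of commuting nilpotent varieties''). In particular the implication $(exp(X),exp(Y)) = 1 \Rightarrow [X,Y]=0$ amounts to the equality of centralizers $C_{\bG}(exp(X)) = C_{\bG}(X)$, a theorem about smoothness/separability of centralizers in good characteristic, not a formal consequence of equivariance. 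If you want to fill in a real proof you should follow Sobaje's route through Seitz's saturation results \cite{Sei} rather than the purely formal manipulations sketched here.
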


Using the Springer isomorphism of Theorem \ref{thm:sobaje}, J. Warner established
the following theorem.  In fact, Warner's theorem looked at the enriched categories (considered by
Quillen in \cite{Q2})  with the same 
objects but more maps (namely, inclusions followed by adjoint actions).

\begin{prop} (J. Warner \cite[3.2.5]{JW})
\label{prop:warner}
The Springer isomorphism of  Theorem \ref{thm:sobaje} induces an equivalence of 
categories with $\tau$-action whose inverse we denote by
$$\ell:  \cE(\tau) \ \stackrel{\sim}{\to} \ \cE_{\bF_p}(\fg).$$
\end{prop}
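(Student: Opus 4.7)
The plan is to construct $\ell$ explicitly using the inverse of the Springer isomorphism and to produce a pointwise inverse $\epsilon \mapsto \exp(\epsilon)$. Since each of the categories $\cE(\tau)$ and $\cE_{\bF_p}(\fg)$ is a poset under inclusion, establishing an equivalence of categories reduces to checking four things: (i) each assignment carries objects to objects; (ii) the two assignments are mutually inverse on object sets; (iii) both preserve inclusions; and (iv) both intertwine the $\tau$-actions induced by conjugation on $\cU_p(\bG) \supset \tau$ and by the adjoint action on $\cN_p(\fg) \supset \fgp$.

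For (i), given $\epsilon \in \cE_{\bF_p}(\fg)$, I would verify that $\exp(\epsilon)$ is an elementary abelian $p$-subgroup of $\tau$: it lies in $\bG(\bF_p)=\tau$ because $\exp$ is defined over $\bF_p$ by Theorem \ref{thm:sobaje}(2); every element has order dividing $p$ since $\exp$ maps into $\cU_p(\bG)$; any two elements commute by property (1) of Theorem \ref{thm:sobaje} because the bracket on $\epsilon$ is trivial; and property (3) yields $\exp(X+Y)=\exp(X)\exp(Y)$ for $X,Y \in \epsilon$, while iteration of (3) using $[X,cX]=0$ yields $\exp(cX)=\exp(X)^c$ for $c \in \bF_p$, so that $\exp(\epsilon)$ contains $e=\exp(0)$ and is closed under products and inverses. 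Conversely, given $E \in \cE(\tau)$, the preimage $\exp^{-1}(E)$ lies in $\cN_p(\fg) \cap \fgp$; for $X,Y \in \exp^{-1}(E)$ the elements $\exp(X),\exp(Y) \in E$ commute, so property (1) forces $[X,Y]=0$, and then property (3) gives $\exp(X+Y) = \exp(X)\exp(Y) \in E$, while iteration of (3) gives $\exp(cX)=\exp(X)^c \in E$ for $c \in \bF_p$. Hence $\exp^{-1}(E)$ is an $\bF_p$-subspace of $\fgp$ on which both the bracket and the $p$-th power operation vanish, i.e., an elementary restricted Lie subalgebra.

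For (ii), that the two assignments are mutually inverse is immediate from the bijectivity of $\exp$ in Theorem \ref{thm:sobaje}: $\exp^{-1}(\exp(\epsilon))=\epsilon$ and $\exp(\exp^{-1}(E))=E$. Claim (iii) is immediate because $\exp$ is a bijection of underlying sets. For (iv), the $\tau$-action on $\cE(\tau)$ is the restriction of the $\bG$-conjugation action on subsets of $\cU_p(\bG)$, while the $\tau$-action on $\cE_{\bF_p}(\fg)$ is the restriction of the adjoint $\bG$-action on subsets of $\cN_p(\fg)$; since $\exp$ is $\bG$-equivariant by Theorem \ref{thm:sobaje}, both assignments respect these actions. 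The only genuinely non-formal step is verifying the $\bF_p$-linearity of $\exp^{-1}(E)$, which in turn reduces to the identity $\exp(cX)=\exp(X)^c$ obtained by iterating property (3); given Sobaje's theorem the argument is otherwise formal, so this represents the main (and quite mild) obstacle.
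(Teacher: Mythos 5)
The paper does not actually prove this proposition: it is imported wholesale as a citation to Warner \cite[3.2.5]{JW}, with only the surrounding remark that Warner in fact proved a stronger statement for Quillen's enriched categories (same objects, more morphisms). Your blind proof is therefore necessarily a different ``route,'' but it is the natural one and it is correct: since both $\cE(\tau)$ and $\cE_{\bF_p}(\fg)$ are posets, an equivalence is just an order-isomorphism of object sets, and you correctly reduce everything to the four listed properties of Sobaje's map $exp$. The forward direction ($\epsilon \mapsto exp(\epsilon)$ lands in $\cE(\tau)$) uses exactly properties (1)--(3) of Theorem \ref{thm:sobaje} plus the containment $exp(\cN_p(\fg)) \subset \cU_p(\bG)$; the backward direction correctly observes that commutativity in $E$ forces $[X,Y]=0$ via the ``if and only if'' in property (1), that closure under addition then follows from property (3), and that vanishing of the $p$-operation on $exp^{-1}(E)$ is automatic because $exp^{-1}(E) \subset \cN_p(\fg)$; and $\tau$-equivariance is exactly the $\bG$-equivariance of $exp$ restricted to $\bF_p$-points. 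Two points you treat lightly but which do hold: $exp(0)=e$ (forced by property (3) applied to $X=Y=0$ together with invertibility), and the fact that a subset of $\fgp$ containing $0$ and closed under addition is automatically an $\bF_p$-subspace, so the iteration $exp(cX)=exp(X)^c$ is not strictly needed for linearity, only for seeing directly that $exp^{-1}(E)$ is closed under scaling. What your argument buys is a self-contained verification in place of an external citation; what it does not recover is Warner's stronger statement about the enriched categories with conjugation morphisms, though your step (iv) contains all the input needed for that refinement as well.
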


The following proposition is a recasting of \cite{CLN} (see also \cite{FP2}).

\begin{prop}
\label{prop:cln}
For $E \in \cE(\tau)$, we define 
$$\ell_E: J_E \ \to  \epsilon_E \otimes k, \quad \sum_{g \in E} a_g (g-e) \ \mapsto \ \sum_{g \in E} a_g\ell(g)$$
where $\epsilon_E \equiv \ell(E)$.
Then, $\ell_E$ sends $J_E^2$ to 0, therefore determining the isomorphism 
$\cL_E: J_E/J_E^2 \ \stackrel{\sim}{\to} \ \epsilon_E \otimes k$.

The colimit indexed by $\cE(\tau)$ of the associated maps of $k$-schemes  
$$\cL_E: Y_E \ \equiv \  \bA_{J_E/J_E^2} \ \to \  \bA_\epsilon \equiv Y_\epsilon$$
is a $\tau$-equivariant isomorphism
$$\cL_\tau: Y_\tau \ \stackrel{\sim}{\to} \ Y_\fg.$$

So defined, $\cL_\tau$ determines a $\tau$-equivariant embedding
\begin{equation}
\label{eqn:embed}
\bP \iota_\tau: \bP Y_\tau \ \hookrightarrow \ \Proj (k[\cN_p(\fg)]).
\end{equation}
\end{prop}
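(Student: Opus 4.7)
The plan is to proceed in four steps, one for each assertion in the proposition, with the bulk of the work going into the $k$-linearity claim at the level of a single elementary abelian $p$-subgroup.

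First I would verify that $\ell_E$ annihilates $J_E^2$. The augmentation ideal $J_E$ is additively spanned by the $g-e$ for $e \neq g \in E$, and $J_E^2$ is spanned by products $(g-e)(h-e) = (gh-e) - (g-e) - (h-e)$ for $g,h \in E$. Thus it suffices to show that
\[ \ell(gh) \;=\; \ell(g)+\ell(h), \qquad g,h \in E.\]
Since $\ell$ is the inverse of $\exp$ from Theorem \ref{thm:sobaje}, write $g = \exp(X)$, $h = \exp(Y)$ with $X = \ell(g), Y = \ell(h) \in \epsilon_E$. Because $E$ is abelian, $g$ and $h$ commute, hence by Theorem \ref{thm:sobaje}(1) we have $[X,Y] = 0$; then Theorem \ref{thm:sobaje}(3) gives $gh = \exp(X)\exp(Y) = \exp(X+Y)$, so $\ell(gh) = X+Y = \ell(g)+\ell(h)$. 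This delivers the induced map $\cL_E: J_E/J_E^2 \to \epsilon_E \otimes k$. Since $\ell$ restricts to a bijection of generating sets (and Warner's equivalence ensures $\dim_k(\epsilon_E \otimes k) = \mathrm{rank}(E) = \dim_k J_E/J_E^2$), $\cL_E$ is a $k$-linear isomorphism, hence its Spec-dual $\cL_E: Y_E \to Y_{\epsilon_E}$ is an isomorphism of affine spaces.

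Next I would promote $\{\cL_E\}_{E \in \cE(\tau)}$ to a $\tau$-equivariant isomorphism in the colimit. For an inclusion $E' \subset E$, the functoriality of $\ell$ in Proposition \ref{prop:warner} identifies $\epsilon_{E'} \subset \epsilon_E$ compatibly with the inclusion $J_{E'}/J_{E'}^2 \hookrightarrow J_E/J_E^2$; thus the family $\{\cL_E\}$ is natural with respect to $\cE(\tau)$. For $\tau$-equivariance: since $\exp$ is $\bG$-equivariant by Theorem \ref{thm:sobaje} (and defined over $\bF_p$), its restriction to the $\bF_p$-points $\tau = \bG(\bF_p)$ commutes with conjugation, so $\ell(g^x) = \Ad(x)(\ell(g))$ for $x \in \tau$, $g \in E$. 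Feeding this into Definition \ref{defn:tau-action} shows each $\cL_E$ intertwines the $\tau$-actions. Taking the colimit over $\cE(\tau)$, and invoking the colimit descriptions $Y_\tau \simeq \varinjlim Y_E$ (Proposition \ref{prop:EmodE2}) and $Y_\fg \simeq \varinjlim Y_\epsilon$ (Proposition \ref{prop:Yg}), assembles the $\cL_E$ into the desired $\tau$-equivariant isomorphism $\cL_\tau: Y_\tau \stackrel{\sim}{\to} Y_\fg$.

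Finally I would obtain the projective embedding $\bP\iota_\tau$. Proposition \ref{prop:Yg} already provides a $\tau$-equivariant closed immersion $Y_\fg \hookrightarrow \cN_p(\fg)$ realized through embeddings of elementary subalgebras $\epsilon \otimes k \subset \fg$; each $\cL_E$ is compatible with the grading induced by dilation on the affine spaces, so composing with $\cL_\tau$ yields a graded $\tau$-equivariant $k$-algebra surjection $k[\cN_p(\fg)] \twoheadrightarrow A_\tau^{\,\mathrm{red}}$ (in the notation of Proposition \ref{prop:EmodE2}); applying $\Proj$ produces the asserted closed embedding $\bP\iota_\tau: \bP Y_\tau \hookrightarrow \Proj k[\cN_p(\fg)]$.

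The main obstacle is the first step, as it is the only place where the fine structure of $\exp$ enters nontrivially: the identity $\ell(gh) = \ell(g) + \ell(h)$ on commuting elements — equivalently, the additivity of $\exp$ on commuting pairs guaranteed by Theorem \ref{thm:sobaje}(3) — is exactly what converts the multiplicative group structure of $E$ into the additive structure of $\epsilon_E$, killing $J_E^2$. Every subsequent step is a naturality or colimit argument once $\cL_E$ is in hand.
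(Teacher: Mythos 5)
Your proposal is correct and follows essentially the same route as the paper: the identity $(g-e)(h-e) = (gh-e)-(g-e)-(h-e)$ combined with the additivity of $\exp$ on commuting elements from Theorem \ref{thm:sobaje} to kill $J_E^2$, Warner's equivalence of indexing categories for the colimit and $\tau$-equivariance, and composition with the embedding $Y_\fg \hookrightarrow \cN_p(\fg)$ of Proposition \ref{prop:Yg} for the projective statement. You simply make explicit which parts of Theorem \ref{thm:sobaje} are invoked, which the paper leaves implicit.
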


\begin{proof}
The map $\ell_E$ was constructed in \cite[5.8]{FP1}.  The fact that $\ell_E(J_E^2) = 0$ follows
from the properties of $exp$ and thus of $\ell_E$ given in Theorem \ref{thm:sobaje} and 
the identity 
\begin{equation}
\label{eqn:identity}
(gh -e) \ = \ (g-e) + (h-e) + (g-e)(h-e) \ \in \ J_E.
\end{equation}
Namely, we conclude that the element $(g-e)(h-e) \in J_E^2$ is sent via $\ell_E$ to 
$\ell_E(gh) - \ell_E(g) - \ell_E(h) =0$.

The isomorphism of colimits follows from the isomorphism of indexing categories given in
Proposition \ref{prop:warner}.  The fact that this isomorphism is $\tau$ invariant follows from
the observation that the $\tau$ actions arise from the actions of $\tau$ on $\cE_{\bF_p}(\fg)$
and $\cE(\tau)$ and that these are $\tau$ equivalent by  Proposition \ref{prop:warner}. 

Finally, $\bP \iota_\tau$ is given as the projectivization of the composition of $\cL_\tau$ and the embedding
$Y_\fg \hookrightarrow \cN_p(\fg)$ of Proposition \ref{prop:Yg}.
\end{proof}

As shown in \cite{SFB1}, the variety
$V_r(\bG_{a(r)})$ of 1-parameter subgroups of height $r$ of $\bG_a$ is naturally isomorphic to $\Spec k[X^{(0)},\ldots,X^{(r-1)}]$:
each 1-parameter subgroup $\bG_{a(r)} \to \bG_a$ given by a map $k[T] \to k[T]/T^{p^r}$ sending
$T$ to a polynomial of the form $p(T) = c_0T + c_1T^p + \cdots + c_{r-1}T^{p^{r-1}}$; the function $X^{(i)}
\in k[X^{(0)},\ldots,X^{(r-1)}]$ 
is given the grading determined by the action of $\bA^1$ on $V_r(\bG_{a(r)})$  sending
$s \in \bA^1(k), p(T) \in V_r(\bG_{a(r)})$  to the 1-parameter subgroup
corresponding to the polynomial $p(sT)$.  With this grading, $X^i$ has ``weight" $p^i$.

In the next proposition, we extend Proposition \ref{prop:cln} to provide a graded map $\ell_E^{(r)}: \bA_{J_E} \to V_r(\ul E)$ 
lifting $\ell_E^{(1)} \equiv \ell_E: J_E \to \epsilon_E\otimes k$.

\begin{prop}
\label{prop:ul-ell}
Fix some $r > 0$.
Let $E$ be an elementary abelian $p$-group of rank $s$, set $\ul E_{\bF_p} \simeq \bG_{a,\bF_p}^{\times s}$,
set $\epsilon_{E_{\bF_p}} = Lie(\ul E_{\bF_p})$, $\ul E = \ul E_{\bF_p} \otimes k$.  We 
identify $E$ with the $\bF_p$-points of $\ul E_{\bF_p}$, and choose a basis $g_1,\ldots, g_s$ for $E$,
setting $x_j = g_j -e \in J_{E_{\bF_p}}/J_{E_{\bF_p}}^2 
\simeq \epsilon_{E_{\bF_p}}$.  We identify $V_r(\ul E)$ with the polynomial algebra 
$k[X_j^{(i)}, 1 \leq j \leq s, 0 \leq i < r]$ with grading given by $wt(X_j^{(i)}) = p^i$.
We define
\begin{equation}
\label{eqn:ell-r}
\ell_E^{(r)}: X_E \equiv \bA_{J_E} \ \to \ V_r(\ul E), 
\quad (\ell_E^{(r)})^*(X_j^{(i)}) = ((g_j-e)^{\vee})^{p^i}.
\end{equation}
So defined,  $\ell_E^{(r)}$ satisfies the following properties:
\begin{enumerate}
\item
$\ell_E^{(r)}(a(g_i-e)) \ = \ (ax_i,a^px_i,\ldots,a^{p^{r-1}}x_i)$ for any $a \in k$.
\item 
$\ell_E^{(r)}(J_E^2) = 0$, so that $\ell_E^{(r)}$ factors through $Y_E$.
\item
The composition of $\ell_E^{(r)}$ with the restriction map $V_r(\ul E) \to V_{r-1}(\ul E)$ equals $\ell_E^{(r-1)}$.
\item
If $E^\prime \subset E$, then $\ell_E^{(r)}$ restricts to $\ell_{E^\prime}^{(r)}$.
\item
$(\ell_E^{(r)})^*$ is a  map of graded $k$-algebras, inducing 
$$\bP \ell_E^{(r)}: \bP Y_E = \Proj S^\bu((J_E/J_E^2)^*)  \ \to \ \Proj k[V_r(\ul E)]$$
which is the composition of a $p$-isogeny and a closed immersion.
\end{enumerate}
\end{prop}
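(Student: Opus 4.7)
The plan is to verify properties (1)--(5) by direct computation with the defining formula, where the main conceptual point is to read $(g_j-e)^\vee$ in (\ref{eqn:ell-r}) as the linear form on $J_E$ vanishing on $J_E^2$ and dual to $\overline{g_j-e}$ in the basis $\{\overline{g_1-e},\ldots,\overline{g_s-e}\}$ of $J_E/J_E^2$, i.e., as the element of $(J_E/J_E^2)^*\subset J_E^*$. This is the reading under which (2) holds literally: each $((g_j-e)^\vee)^{p^i}$ lies in $S^\bu((J_E/J_E^2)^*)$, so the pullback $(\ell_E^{(r)})^*$ factors through the subalgebra $S^\bu((J_E/J_E^2)^*)\hookrightarrow S^\bu(J_E^*)$, which on Spec gives the factorization $X_E\twoheadrightarrow Y_E\to V_r(\ul E)$ through the quotient $Y_E$.

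Having fixed this interpretation, I would verify (1) by evaluation: at the point $a(g_i-e)\in J_E$ one has $(g_j-e)^\vee(a(g_i-e))=a\delta_{ij}$, so $X_j^{(n)}$ pulls back to $(a\delta_{ij})^{p^n}$, which assembles to the tuple $(ax_i,a^px_i,\ldots,a^{p^{r-1}}x_i)$ under the identification of $V_r(\ul E)$ with $\epsilon_E^{\oplus r}$ recalled in the text (a $1$-parameter subgroup $\bG_{a(r)}\to\ul E\cong\bG_a^{\oplus s}$ being recorded by the $r$-tuple of coefficients in the Frobenius-power basis $T,T^p,\ldots,T^{p^{r-1}}$). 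Property (3) holds because the restriction $V_r(\ul E)\to V_{r-1}(\ul E)$ corresponds on coordinate rings to the inclusion $k[V_{r-1}(\ul E)]=k[X_j^{(i)},\,i<r-1]\hookrightarrow k[V_r(\ul E)]$, on which the formulas for $(\ell_E^{(r)})^*$ and $(\ell_E^{(r-1)})^*$ literally agree. For (4), I would choose the basis $g_1,\ldots,g_s$ of $E$ to extend a basis $g_1,\ldots,g_{s^\prime}$ of $E^\prime$; then the restriction $(J_E/J_E^2)^*\to(J_{E^\prime}/J_{E^\prime}^2)^*$ sends $(g_j-e)^\vee$ to $(g_j-e)^\vee_{E^\prime}$ for $j\le s^\prime$ and to $0$ otherwise, matching the pullback of the inclusion $V_r(\ul E^\prime)\hookrightarrow V_r(\ul E)$ on coordinates.

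For (5), gradedness is immediate because $X_j^{(i)}$ has weight $p^i$ while $((g_j-e)^\vee)^{p^i}$ has degree $p^i$ in $S^\bu((J_E/J_E^2)^*)$. For the structural assertion on $\bP\ell_E^{(r)}$, the degree-$1$ images $(\ell_E^{(r)})^*(X_j^{(0)})=(g_j-e)^\vee$ already generate the target, so $(\ell_E^{(r)})^*\colon k[V_r(\ul E)]\to S^\bu((J_E/J_E^2)^*)$ is a graded surjection with kernel generated by the Frobenius-type relations $X_j^{(i)}-(X_j^{(0)})^{p^i}$ for $i\ge 1$. The induced map on $\Proj$ is therefore a closed immersion of $\bP Y_E$ onto the reduced closed subscheme of $\Proj k[V_r(\ul E)]$ cut out by these relations; the Frobenius nature of the defining relations accounts for the ``$p$-isogeny'' in the factorization asserted in (5).

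I expect the main obstacle to be pinning down the correct reading of $(g_j-e)^\vee$ in the formula so that (2) is literally true, since the notation $(g-e)^\vee$ was introduced in Definition \ref{defn:Theta-tau} as a dual basis vector in $J_E^*$, which does \emph{not} vanish on $J_E^2$; the proof depends on systematically reading it as the corresponding element of $(J_E/J_E^2)^*$ determined by the chosen basis $\{g_1,\ldots,g_s\}$. Once this is settled, the remaining items reduce to short naturality or degree-counting arguments. A secondary subtlety in (5) is the weighted grading on $k[V_r(\ul E)]$: the matching of weight $p^i$ on $X_j^{(i)}$ with degree $p^i$ on $((g_j-e)^\vee)^{p^i}$ is precisely what makes the Frobenius-type relations homogeneous and the factorization through a $p$-isogeny meaningful.
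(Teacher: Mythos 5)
Your proposal is correct and, for items (1), (3), (4) and (5), follows essentially the same direct-verification route as the paper's proof. The one place where you genuinely diverge is item (2), and there your treatment is the more reliable one. The paper disposes of (2) in a single line by asserting that $((g_j-e)^{\vee})^{p^i}$ vanishes on every product $(g-e)(h-e)$; but if $(g-e)^\vee$ is read literally as the dual basis vector of Definition \ref{defn:Theta-tau}, that assertion fails: since $(g_1-e)(g_2-e)=(g_1g_2-e)-(g_1-e)-(g_2-e)$, one computes $(g_1-e)^\vee\bigl((g_1-e)(g_2-e)\bigr)=-1$ (and already in rank one, $(g_1-e)^\vee\bigl((g_1-e)^2\bigr)=-2$). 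Your resolution --- reading $(g_j-e)^\vee$ in (\ref{eqn:ell-r}) as the element of $(J_E/J_E^2)^*\subset J_E^*$ dual to $\ol{g_j-e}$, which is consistent with Proposition \ref{prop:radical-map} and with the later use of $(\ell_E^{(r)})^*$ as a map into $B_E$ in Proposition \ref{prop:thetaE-restrict} --- is the reading under which (2) is literally true, and it renders (2) immediate (the pullback lands in the subalgebra $S^\bu((J_E/J_E^2)^*)$) rather than resting on a pointwise computation. The remaining items match the paper closely: (1) is the same evaluation (on which the two readings of $(g_j-e)^\vee$ agree, since only the generators $g_i-e$ are involved); (3) is the same remark about the inclusion of coordinate rings; for (4) the paper instead invokes the factorization through $J_E/J_E^2$ established in (2), while you extend a basis of $E'$ to one of $E$ --- both work, yours being more explicit about the matching of coordinates; and your description of $(\ell_E^{(r)})^*$ in (5) as a graded surjection with kernel generated by the relations $X_j^{(i)}-(X_j^{(0)})^{p^i}$ is a slightly sharpened version of the paper's factorization through the subalgebra $S^\bu(\{((g_j-e)^\vee)^{p^i}\})$.
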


\begin{proof}
Assertion (1) follows from the observation that $((g_j-e)^{\vee})^{p^i}(a(g-e)) = ((g_j-e)^{\vee}(a(g-e)))^{p^i} 
$ equals $a^{p^i}$ if $g=g_j$ and $0$
if $g \not= g_j$.  We verify assertion (2) by observing that  $((g_j-e)^{\vee})^{p^i}((g-e)(h-e) =0$ for all $g, h \in E$.
The restriction map $V_r(\ul E) \to V_{r-1}(\ul E)$ is given on coordinate functions by the natural inclusion 
$k[X_j^{(i)}, 1 \leq j \leq s, 0 \leq i < r-1] \to k[X_j^{(i)}, 1 \leq j \leq s, 0 \leq i < r]$ which establishes 
assertion (3). 

Assertion (4) follows from assertion (3) which tells us that that $\ell_E^{(r)}$ factors through $\bA_{J_E} \ 
\twoheadrightarrow \bA_{J_E/J_E^2}$ and thus does not depend upon a splitting of $J_E \twoheadrightarrow J_E/J_E^2$.

The algebra map  $(\ell_E^{(r)})^*: k[V_r(\ul E)] \to S^\bu((J_E/J_E^2)), \ X_j^{(i)} \mapsto ((g_j-e)^\vee)^{p^i}$ factors as 
$$k[V_r(\ul E)] \ \twoheadrightarrow
 \ S^\bu(\{ ((g_j-e)^\vee)^{p^i}\}) \ \hookrightarrow S^\bu((J_E/J_E^2)),$$
whose associated map of schemes is a composition of a $p$-isogeny and a closed immersion.
\end{proof}

One special aspect of the algebraic group $\ul E$ is the existence of an inverse (given by truncation) 
of the natural inclusion $k \ul E_{(r)} \to k \ul E$ (defined as the ``dual" of the quotient map $k[\ul E] \to k[\ul E_{(r)}]$).
We denote the composition of the natural inclusion $kE \hookrightarrow$ with this inverse by 
$tr_E: kE \ \to \ k \ul E_{(r)}.$

The following proposition relates the universal $p$-nilpotent operator $\Theta_E \in A_E \otimes kE$ to 
the universal $p$-nilpotent operator $\Theta_{\ul E_{(r)}}$.

\begin{prop}
\label{prop:thetaE-restrict}
Retain the notation of Proposition \ref{prop:ul-ell}.   Modulo terms in $Rad^2(k \ul E_{(r)})$, the universal $p$-nilpotent 
operator $\Theta_{\ul E_{(r)}}$ equals 
\begin{equation}
\label{eqn:trunc-theta}
\ol{ \Theta_{\ul E_{(r)}}} \ \equiv \ \sum_{j=1}^s \sum_{i=0}^{r-1} (X_j^{(i)})^{p^{r-i-1}} \otimes u_{r-i-1}^j  \ \in 
\ k[V_r(\ul E)] \otimes k \ul E_{(r)}.
\end{equation}
Modulo terms in $Rad^2(kE)$, $\Theta_E$ equals
\begin{equation}
\label{eqn:trunc-theta-E}
\ol{\Theta_E} \ \equiv \ \sum_{j=1}^s (g_j-e)^\vee \otimes g_j-e \ \in A_E \otimes kE
\end{equation}
for any choice of basis $\{ g_1,\ldots,g_r\}$ of $E$ (so that $\ol{\Theta_E}$ is the restriction  of 
$\sum_{j=1}^s \ol {(g_j-e)}^\vee \otimes g_j-e \in B_E \otimes kE$ along $s_E^*: A_E \to B_E$).

We set \ $\ol{\Theta_E}^{(r-1)} \ \equiv \ \sum_{j=1}^s ((g_j-e)^\vee)^{p^{r-1}} \otimes g_j-e$.
Then 
\begin{equation}
\label{eqn:theta-equal}
((\ell_E^{(r)})^*\otimes 1)(\ol{ \Theta_{\ul E_{(r)}}}) \ = \ (1\otimes tr_{E*})(\ol{\Theta_E}^{(r-1)}) \ \in \ B_E \otimes  k \ul E_{(r)}.
\end{equation}
In other words, we have the following commutative square 
\begin{equation}
 \label{diag:theta-commute}
\begin{xy}*!C\xybox{%
\xymatrix{
k[\ul E_{(r)}] \ar[d]^{tr_E^*} \ar[r]^{\ol{ \Theta_{\ul E_{(r)}}}} & k[V_r(\ul E)] \ar[d]^{(\ell_E^{(r)})^*} \\
k[E] \ar[r]^{{\ol \Theta_E^{(r-1)}}} & B_E .
 }
}\end{xy}
\end{equation}
\end{prop}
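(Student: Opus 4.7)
The plan is to verify the three assertions of the proposition in sequence: the formula for $\ol{\Theta_{\ul E_{(r)}}}$ modulo $Rad^2(k\ul E_{(r)})$, the formula for $\ol{\Theta_E}$ modulo $Rad^2(kE)$, and the identity \eqref{eqn:theta-equal}, from which the commutative diagram \eqref{diag:theta-commute} follows via Hopf duality.

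First, recall from \cite{SFB1} that $\Theta_{\bG_{a(r)}} \in k[V_r(\bG_{a(r)})] \otimes k\bG_{a(r)}$ arises from the universal $1$-parameter subgroup, which on coordinate rings is given by $T \mapsto \sum_{i=0}^{r-1} X^{(i)} T^{p^i}$. Dualizing to the distribution algebra, the part modulo $Rad^2$ extracts the linear-in-divided-power contribution: the generator $u_{r-i-1}$ dual to $T^{p^{r-i-1}}$ pairs with $(X^{(i)})^{p^{r-i-1}}$ after the appropriate Frobenius twist. Summing the $s$ coordinate contributions for $\ul E_{(r)} = \bG_{a(r)}^{\times s}$ produces \eqref{eqn:trunc-theta}.

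Second, for $\ol{\Theta_E}$, apply the identity $(gh - e) = (g - e) + (h - e) + (g - e)(h - e)$ in $J_E$ (already used in the proof of Proposition \ref{prop:cln}) inductively to obtain $g_1^{c_1} \cdots g_s^{c_s} - e \equiv \sum_j c_j (g_j - e) \mod J_E^2$. Substituting this into $\Theta_E = \sum_{g \neq e} (g - e)^\vee \otimes (g - e)$ and applying $s_E^* \otimes 1$ (with $s_E^*$ as in Proposition \ref{prop:radical-map}) annihilates all non-generator contributions $(g - e)^\vee$ for $g \ne g_j$, leaving exactly $\sum_j \ol{(g_j - e)}^\vee \otimes (g_j - e) \in B_E \otimes kE$. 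The element $\ol{\Theta_E} = \sum_j (g_j - e)^\vee \otimes (g_j - e)$ is then the canonical lift along the section of $s_E^*$ determined by the basis $\{g_j\}$; the case of $\ol{\Theta_E}^{(r-1)}$ is analogous.

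Third, verify \eqref{eqn:theta-equal} by direct computation. Using $(\ell_E^{(r)})^*(X_j^{(i)}) = ((g_j - e)^\vee)^{p^i}$ from \eqref{eqn:ell-r}, the left-hand side equals
\[
\sum_j \sum_{i=0}^{r-1} ((g_j - e)^\vee)^{p^i \cdot p^{r-i-1}} \otimes u_{r-i-1}^j \ = \ \sum_j ((g_j - e)^\vee)^{p^{r-1}} \otimes \sum_{i=0}^{r-1} u_i^j.
\]
For the right-hand side it suffices to compute $tr_{E*}(g_j - e) \in k\ul E_{(r)}$. Under the inclusion $kE \hookrightarrow k\ul E$, the element $g_j$ (corresponding to the standard basis vector $e_j \in \ul E(\bF_p)$) maps to the group-like element $\sum_{n \geq 0} u_{[n]}^{(j)}$ in the distribution algebra, where $u_{[n]}^{(j)}$ denotes the $n$-th divided power of $u_0^{(j)}$. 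Truncating to $k\ul E_{(r)}$ yields $tr_{E*}(g_j - e) = \sum_{n=1}^{p^r - 1} u_{[n]}^{(j)}$. Modulo $Rad^2(k\ul E_{(r)})$, only the divided powers at indices $n = p^i$ are primitive; the remaining ones are products of lower divided powers and lie in $Rad^2$. Thus $tr_{E*}(g_j - e) \equiv \sum_{i=0}^{r-1} u_{[p^i]}^{(j)} = \sum_{i=0}^{r-1} u_i^j \mod Rad^2(k\ul E_{(r)})$, matching the left-hand side. The commutative diagram \eqref{diag:theta-commute} then follows by reinterpreting each $\ol{\Theta}$ as a $k$-linear map between coordinate rings via the Hopf dualities $k[\ul E_{(r)}] \cong (k\ul E_{(r)})^*$ and $k[E] \cong (kE)^*$.

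The main obstacle is the careful bookkeeping in the distribution algebra of $\ul E$: identifying the image in $k\ul E_{(r)}$ of the group-like element corresponding to $g_j$ in terms of divided powers, and verifying precisely that modulo $Rad^2$ only the primitive generators $u_i^j = u_{[p^i]}^{(j)}$ survive. This relies on standard facts about the divided power structure of the hyperalgebra of $\bG_a$ and its compatibility with the isomorphism $kE \cong k\ul E_{(1)}$ used to view $g_j - e$ as $u_0^{(j)}$ modulo higher-order terms.
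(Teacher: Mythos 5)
Your proof is correct and follows essentially the same route as the paper's: the paper cites \cite[Prop.\ 6.5]{SFB1} for \eqref{eqn:trunc-theta}, points to the first sentence of the proof of Proposition \ref{prop:radical-map} for \eqref{eqn:trunc-theta-E}, and verifies \eqref{diag:theta-commute} by exactly the evaluation of $(\ell_E^{(r)})^*\bigl((X_j^{(i)})^{p^{r-i-1}}\bigr) = ((g_j-e)^\vee)^{p^{r-1}}$ and of $tr_{E*}(g_j-e)$ that you carry out, merely packaging the check as a computation at geometric points of $\Spec B_E$ (legitimate since $B_E$ is reduced). Your explicit identification $tr_{E*}(g_j-e)=\sum_{n\geq 1}u^{(j)}_{[n]}\equiv \sum_{i}u_i^j \bmod Rad^2(k\ul E_{(r)})$ is the right computation and makes visible that \eqref{eqn:theta-equal} is an equality modulo $B_E\otimes Rad^2(k\ul E_{(r)})$ --- the reading forced by the surrounding ``modulo $Rad^2$'' qualifications --- so this is a looseness of the statement rather than a gap in your argument.
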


\begin{proof}
The explicit description of $\ol{ \Theta_{\ul E_{(r)}}}$ is given towards the end of the proof of \cite[Prop 6.5]{SFB1}.
The description of $\ol {\Theta_E}$ is implicit in the first sentence of the proof of Proposition \ref{prop:radical-map}.

Since $B_E$ is reduced, to verify the commutativity of (\ref{diag:theta-commute}) it suffices 
to consider the composition of the two maps $k[\ul E_{(r)}] \to B_E$ with the valuation maps 
$c(g_j-e): B_E \to K$ for each $j, 1 \leq j \leq s$ and each $c \in K$ for field extensions $K/k$ (these are the
geometric points of $Y_E = \Spec B_E$).  An explicit computation verifies
that each of these compositions sends $X_j^{(i)}$ to  $c^{p^{r-1}}u_i^j \in K\ul E_{(r)}$.
 \end{proof}
 
 As for $\Theta_\tau$ in Definition \ref{defn:Theta-tauM} and $\bP \Theta_\tau$ in Definition \ref{defn:bPTheta},
 the operators of the previous proposition produce determine module homomorphisms.  These are the subject of
 the next proposition. 
 
 \begin{prop}
 \label{prop:theta-homotopy}
 Retain the notation of Proposition \ref{prop:thetaE-restrict}.   For any finite dimensional $kE$-module, the
 kernel, cokernel, and image of $\bP \ol{\Theta_{E,M}}^{(r-1)}$ are coherent sheaves on $\bP X_E$ which 
 are the $(r-1)$-st Frobenius twists of kernel, cokernel, and image of $\bP \ol{\Theta_{E,M}}$.
 
 For any $kE$-module $M$ of constant $j$-rank, the kernel, cokernel, and image of
 \begin{equation}
 \label{eqn:homotopyE}
 (\bP \ol{\Theta_{E,M}}^{(r-1)})^j+ t((\bP \Theta_{E,M}^{(r-1)})^j-(\bP \ol{\Theta_{E,M}}^{(r-1)})^j):
 \cO_{\bP X_E}[t] \otimes M \ \to \ \cO_{\bP X_E}(jp^{r-1})[t] \otimes M
 \end{equation}
 are coherent sheaves on $\bP X_E \times \bA^1$ which restrict to vector bundles on $\bU_E \times \bA^1$. 
 
 Similarly, If $M$ is a $\ul E_{(r)}$-module of constant $j$-rank, then the kernel, cokernel, and image of
 \begin{equation}
 \label{eqn:homotopyEr}
  (\bP \ol{\Theta_{\ul E_{(r)},M}})^j +
 t((\bP \Theta_{\ul E_{(r)},M})^j- (\bP\ol{ \Theta_{\ul E_{(r)},M}})^j  )
 \end{equation}
 are vector bundles on $\Proj k[V_r(\ul E)] \times \bA^1$.
 
 Consequently, assuming constant $j$-rank, the classes in $K_0(\bU_E)$ of  
 $$ker\{(\bP \ol{\Theta_{E,M}}^{(r-1)})^j\}, \quad  ker\{ (\bP \Theta_{E,M}^{(r-1)})^j\}$$ are equal,
 as are the classes in $K_0(\Proj k[V_r(\ul E)])$ of 
  $$ker\{(\bP\ol{ \Theta_{\ul E_{(r)},M}})^j \}, \quad ker\{(\bP \Theta_{\ul E_{(r)},M})^j\}.$$ 
 The analogous equalities with kernel replaced by image and with kernel replaced by cokernel 
  are also valid.
 \end{prop}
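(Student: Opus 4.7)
The plan is to establish the three sheaf-theoretic claims in sequence and then derive the $K_0$ equalities as a formal consequence of $\bA^1$-invariance. The Frobenius-twist identification is the most direct: the explicit formula $\ol{\Theta_E}^{(r-1)} = \sum_{j=1}^s ((g_j-e)^\vee)^{p^{r-1}} \otimes (g_j-e)$ realizes $\bP\ol{\Theta_{E,M}}^{(r-1)}$ as the pullback of $\bP\ol{\Theta_{E,M}}$ along the $(r-1)$-st absolute Frobenius $F^{r-1}$ on $\bP X_E$, using the canonical identification $(F^{r-1})^*\cO_{\bP X_E}(1) \simeq \cO_{\bP X_E}(p^{r-1})$. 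Since $F^{r-1}$ is flat and flat pullback is exact, the kernel, image, and cokernel of $\bP\ol{\Theta_{E,M}}^{(r-1)}$ are exactly the Frobenius twists of the corresponding coherent sheaves for $\bP\ol{\Theta_{E,M}}$.

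For the vector bundle property of the homotopy family on $\bU_E \times \bA^1$, I would follow the template of Theorem \ref{thm:projbundle}: coherence of kernel, image, and cokernel is formal (abelian-category operations on coherent sheaves), and Proposition \ref{prop:locfree} reduces the vector bundle property to constant fiber dimension of the image sheaf along connected components of $\bU_E \times \bA^1$. At a geometric point $(\xi, t_0)$, the fiber of the interpolated operator is $(1-t_0)(\ol{\Theta_{E,M,\xi}}^{(r-1)})^j + t_0(\Theta_{E,M,\xi}^{(r-1)})^j \in \End(M\otimes K)$. For the endpoints $t_0 = 0, 1$ the constant $j$-rank hypothesis provides a common rank, and for intermediate $t_0$ I would factor this fiber operator as $(\Theta_{E,M,\xi}^{(r-1)})^j$ composed with a unit in $1 + J_E \otimes K$, exploiting the identity $gh - e = (g-e) + (h-e) + (g-e)(h-e)$ to absorb the non-generator contributions in $\Theta_E - \ol\Theta_E$ and the nilpotence of $J_E$ to ensure invertibility of the correction term. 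The analogous $\ul E_{(r)}$ statement on $\Proj k[V_r(\ul E)] \times \bA^1$ follows by the same template, using the formulas of Proposition \ref{prop:thetaE-restrict} and the theory of \cite{FP4}.

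The $K_0$ consequence is then formal. For a vector bundle $\cV$ on $\bU_E \times \bA^1$, the pullbacks along the sections $i_0, i_1: \bU_E \hookrightarrow \bU_E \times \bA^1$ at $t = 0$ and $t = 1$ satisfy $[i_0^*\cV] = [i_1^*\cV]$ in $K_0(\bU_E)$ by $\bA^1$-homotopy invariance (paralleling the use of Quillen's theorem in Proposition \ref{prop:iso-KE}), which gives the claimed equality of classes of kernels (and similarly images and cokernels); the same applies on $\Proj k[V_r(\ul E)]$. The principal obstacle is the uniform factorization-by-a-unit at intermediate $t_0$: while this is conceptually transparent when $E \simeq \bZ/p$ (where $J_E$ is generated by a single nilpotent), for higher-rank $E$ the combinatorics of rewriting $(1-t_0)(\ol{\Theta_{E,M,\xi}}^{(r-1)})^j + t_0(\Theta_{E,M,\xi}^{(r-1)})^j$ as a single $j$-th power modified by a unit, uniformly across $(\xi, t_0) \in \bU_E \times \bA^1$, is delicate and is the step where the detailed structure of the Jacobson radical of $kE$ enters the argument.
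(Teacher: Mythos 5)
Your treatment of the first and last steps coincides with the paper's: the Frobenius--twist identification is obtained by viewing $\ol{\Theta_E}^{(r-1)}$ as the pull-back of $\ol{\Theta_E}$ along the $(r-1)$-st Frobenius $F^{r-1}\colon X_E \to X_E$, and the $K_0$ equalities follow by restricting the bundle on $\bU_E\times\bA^1$ to $t=0$ and $t=1$ and invoking homotopy invariance, exactly as in the paper. The gap is in the middle step, and you have flagged it yourself: nothing in the proposal establishes that the fiber rank of the interpolated operator is constant at parameters $t_0\neq 0,1$, and the mechanism you propose --- writing $(1-t_0)(\ol{\Theta_{E,M,\xi}}^{(r-1)})^j + t_0(\Theta_{E,M,\xi}^{(r-1)})^j$ as $(\Theta_{E,M,\xi}^{(r-1)})^j$ composed with a unit in $1+J_E\otimes K$ --- cannot work. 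Set $y=\ol{\Theta_{E,\xi}}^{(r-1)}$ and $x=\Theta_{E,\xi}^{(r-1)}$ in the commutative local ring $KE$. The very identity $gh-e=(g-e)+(h-e)+(g-e)(h-e)$ that you invoke shows that $x-y$ is in general \emph{not} in $J_E^2\otimes K$, so $x$ and $y$ have non-proportional images in $(J_E/J_E^2)\otimes K$ and are not associates. Concretely, for $E\simeq\bZ/p^{\times 2}$ and $\xi\in\bU_E$ the point with $a_{g_1-e}=a_{g_1g_2-e}=1$ and all other coordinates zero, $y=g_1-e$ while $x=2(g_1-e)+(g_2-e)+(g_1-e)(g_2-e)$; already for $j=1$ the interpolated element $(1-t_0)y+t_0x$ has image $(1+t_0)\ol{(g_1-e)}+t_0\ol{(g_2-e)}$ in $J_E/J_E^2\otimes K$, which is proportional to the image $2\ol{(g_1-e)}+\ol{(g_2-e)}$ of $x$ only when $t_0=1$, so for $t_0\neq 1$ it is not a unit multiple of $x$ (and $\Ann(y)\neq\Ann(x)$, since $(g_1-e)^{p-1}$ kills $y$ but not $x$). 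The ``absorption into a unit'' is therefore not merely delicate for higher-rank $E$; it is false.

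The paper's proof diverges from yours at precisely this point: it does not factor the operator but compares fiber dimensions directly via Proposition \ref{prop:locfree}, arguing that the difference $(\bP\Theta_{E,M}^{(r-1)})^j-(\bP\ol{\Theta_{E,M}}^{(r-1)})^j$ acts at each $K$-point of $\bP X_E$ through an element of $\Rad^{j+1}(KE)$, whose rank on $K\otimes M$ is strictly smaller than the constant $j$-rank, so that adding $t$ times this difference does not alter the fiber rank of $(\bP\ol{\Theta_{E,M}}^{(r-1)})^j$; the same rank count handles the $\ul E_{(r)}$ case. To complete your write-up you should replace the unit-factorization step by an argument of this rank-comparison type (or otherwise prove constancy of the fiber rank on all of $\bU_E\times\bA^1$); as it stands you have verified the two endpoints of the homotopy but not the homotopy itself, which is the entire content of the middle assertion.
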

  
 \begin{proof}
 The identification of the kernel, image, cokernel of $\bP \ol{\Theta_{E,M}}^{(r-1)}$ as $r-1$-st Frobenius twists of
 the kernel, image, cokernel of $\bP \ol{\Theta_{E,M}}$ follows directly from the observation that $\ol{\Theta_E}^{(r-1)}$
 is the pull-back along the $r-1$-st Frobenius $F^{r-1}: X_E \to X_E$ of $\ol{\Theta_E}$.
 
 We readily verify that (\ref{eqn:homotopyE}) is a well defined map of $M \otimes \cO_{\bP X_E \times \bA^1}$-modules
 arising from a homogeneous $A_E[t]$-endomorphism of $M\otimes A_E[t]$ of degree $jp^{r-1}$.
 To verify that the asserted kernels, cokernels, and images are vector bundles, it suffices by Proposition \ref{prop:locfree}
 to verify that these have fibers at points of $\bP X_E \times \bA^1$ of dimension equal to the constant $j$-rank of $M$.
 This follows from the observation that the fibers of $\bP \Theta_{E,M}^{(r-1)})^j-(\bP \ol{\Theta_{E,M}}^{(r-1)})^j$
 at points of $\bP X_E$ are all of smaller dimension.  This latter fact is a consequence of the fact that at each $K$-point
 of $\bP X_E$ the action of this difference operator is given by an element of $Rad^{j+1}(KE)$ on $K\otimes M$ which
 necessarily has rank less than the (constant) rank of the $j$-th power of elements of $Rad(KE)\backslash Rad^2(KE)$ 
 on $K\otimes M$.
 
 Observe that the vector bundles of $\bU_E$ given by restricting 
 $ker\{(\bP \ol{\Theta_{E,M}}^{(r-1)})^j\}, \\  ker\{ (\bP \Theta_{E,M}^{(r-1)})^j\}$ are restrictions to $\bU_E \times \{ 0\}, \ 
 \bU_E \times \{ 1\}$ of the kernel vector bundle on $\bU_E \times \bA_1$ associated to (\ref{eqn:homotopyE}).  
 Thus, homotopy invariance of $K_0(-)$ tells 
 us that the classes of these vector bundles in $K_0(\bu U_E)$ are equal.  Replacing kernels by either images or cokernels 
 gives the corresponding equality of classes in $K_0(\bu U_E)$.
 
 The verification of the analogous statements for $\ul E_{(r)}$-modules $M$ of constant $j$-rank requires only 
 notational changes of the verification for $kE$-modules.
  \end{proof}

We recall  the following definition of  {\it exponential degree} for a rational $\bG$-module.
(See also the definition of ``nilpotent degree" of $M$ in \cite[Defn 2.5]{F2}.)

 \begin{defn} (\cite[Defn 4.5]{F3})
 Let $\bG$ be a linear algebraic group and let $M$ be a  rational $\bG$ module.  Then 
 $M$ is said to have exponential degree $< p^r$ if the coaction 
$$M \ \to \ M \otimes k[\bG] \ \stackrel{\psi^*}{\to} \ M \otimes k[\bG_a] =  M \otimes k[T]$$
for any 1-parameter subgroup $\psi: \bG_a \to \bG$
 lies in $M \otimes k[T]_{< p^r}$, where $k[T]_{< p^r} \subset k[T]$ is the sub-coalgebra of polynomials 
 of degree $< p^r$.
 \end{defn}
 
 \vskip .1in
 
 Every finite dimensional rational $\bG$-module $M$ has finite exponential degree provided
 that 1-parameter subgroups $\bG_a \to \bG$ are parametrized by finite sequences of elements of 
 $\fg$.  This is the case for $\bG$ equal to $\ul E$ or for $\bG$ satisfying Hypothesis \ref{hypoth:G} .

\begin{prop}
\label{prop:E-compare}
Retain the notation of Propositions \ref{prop:ul-ell} and \ref{prop:thetaE-restrict}; in particular, $E$ is an
elementary abelian $p$-group of rank $s$ and $\ul E \simeq \bG_a^{\times s}$.  Let $M$ be a finite
dimensional rational $\ul E$-module whose restriction to $\ul E_{(r)}$ has non-zero constant $j$-rank
and which has exponential degree $< p^r$.  Thus, the restriction of $M$ to $E$ has constant $j$-rank 
equal to constant $j$-rank of $M$ restricted to $\ul E_{(r))}$.

The classes in $K_0(\bU_E)$  of the $(r-1)$-st Frobenius twists of the vector bundles 
$$ker\{ (\bP \Theta_{E,M})^j \}, \quad coker\{ (\bP \Theta_{E,M})^j \}, \quad im\{ (\bP \Theta_{E,M})^j\} $$
on $\bU_E$ constructed in Theorem \ref{thm:projbundle} are equal to the images under
$$(\bP \ell_E^{(r)})^*: K_0(\Proj k[V_r(\ul E)]) \ \to \ K_0(\bU_E)$$ 
of the classes of the corresponding vector bundles 
$$ker\{\bP \Theta_{\ul E_{(r)},M}^j \}, \quad coker\{\bP \Theta_{\ul E_{(r)},M}^j \}, \quad im\{\bP \Theta_{\ul E_{(r)},M}^j \}$$
on $\Proj k[V_r(\ul E)] $  constructed in \cite{FP4}. 
 \end{prop}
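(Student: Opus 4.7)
The plan is to establish the asserted equality of $K_0$-classes by a three-step reduction that passes through the radical-squared approximations $\bP \ol{\Theta_{\ul E_{(r)},M}}$ and $\bP \ol{\Theta_{E,M}}^{(r-1)}$ introduced in Proposition \ref{prop:thetaE-restrict}.

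First, I would invoke Proposition \ref{prop:theta-homotopy} twice. Applied to $M$ as a $\ul E_{(r)}$-module of constant $j$-rank, it yields
$$[\ker\{(\bP \Theta_{\ul E_{(r)},M})^j\}] \ = \ [\ker\{(\bP \ol{\Theta_{\ul E_{(r)},M}})^j\}] \ \in \ K_0(\Proj k[V_r(\ul E)]),$$
together with the analogous equalities for cokernel and image. Applied to $M$ as a $kE$-module with the $(r-1)$-st Frobenius-twisted operator, it yields the corresponding equality
$$[\ker\{(\bP \Theta_{E,M}^{(r-1)})^j\}] \ = \ [\ker\{(\bP \ol{\Theta_{E,M}}^{(r-1)})^j\}] \ \in \ K_0(\bU_E).$$
Since $\Theta_{E,M}^{(r-1)}$ is the $F^{r-1}$-pull-back of $\Theta_{E,M}$ (by construction, in parallel with the identification stated in Proposition \ref{prop:theta-homotopy} for the barred operators), its kernel, image, and cokernel are precisely the $(r-1)$-st Frobenius twists of the corresponding sheaves for $\bP\Theta_{E,M}$, matching the left-hand side sought in the proposition.

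Second, I would appeal to the commutative square (\ref{diag:theta-commute}) of Proposition \ref{prop:thetaE-restrict}, which reads
$$((\ell_E^{(r)})^* \otimes 1)(\ol{\Theta_{\ul E_{(r)}}}) \ = \ (1 \otimes tr_{E*})(\ol{\Theta_E}^{(r-1)}).$$
The hypothesis that $M$ has exponential degree $< p^r$ ensures that its $\ul E_{(r)}$-module structure is determined by its $E$-action through $tr_E$: for each $g \in E$, the elements $g \in kE$ and $tr_E(g) \in k\ul E_{(r)}$ act identically on $M$. Combining this identification with the square above, the pull-back along $\bP \ell_E^{(r)} : \bU_E \to \Proj k[V_r(\ul E)]$ of the endomorphism $\bP \ol{\Theta_{\ul E_{(r)},M}}$ (viewed up to the appropriate grading shift) is identified with the endomorphism $\bP \ol{\Theta_{E,M}}^{(r-1)}$ of $\cO_{\bU_E} \otimes M$.

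Third, because the coherent sheaves in question are locally free on $\bU_E$ and on $\Proj k[V_r(\ul E)]$ (by the constant $j$-rank hypothesis together with Theorem \ref{thm:projbundle}), pull-back along $\bP \ell_E^{(r)}$ commutes with the formation of kernel, image, and cokernel. The identification from Step 2 therefore gives
$$(\bP \ell_E^{(r)})^*[\ker\{(\bP \ol{\Theta_{\ul E_{(r)},M}})^j\}] \ = \ [\ker\{(\bP \ol{\Theta_{E,M}}^{(r-1)})^j\}] \ \in \ K_0(\bU_E),$$
and the analogues for cokernel and image; chaining with the $K_0$ identifications from Step 1 yields the proposition. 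I expect the main obstacle to be the precise execution of Step 2: explicitly unpacking the exponential-degree hypothesis to upgrade the identity of Proposition \ref{prop:thetaE-restrict} from an identity in $B_E \otimes k\ul E_{(r)}$ to an identity of coherent-sheaf endomorphisms of $\cO_{\bU_E} \otimes M$, and verifying that nothing is lost in passing through the $p$-isogeny component of $\bP \ell_E^{(r)}$ (which is harmless on $K_0$ but must be handled at the sheaf level before localizing to $K_0$).
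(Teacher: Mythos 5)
Your proposal is correct and follows essentially the same route as the paper's proof: reduce via Proposition \ref{prop:theta-homotopy} to the radical-squared approximations $\bP \ol{\Theta_{\ul E_{(r)},M}}$ and $\bP \ol{\Theta_{E,M}}^{(r-1)}$, then use the exponential-degree hypothesis together with (\ref{eqn:theta-equal}) to identify $\bP \ol{\Theta_{E,M}}^{(r-1)}$ with the pull-back along $\bP \ell_E^{(r)}$ of $\bP \ol{\Theta_{\ul E_{(r)},M}}$ at the level of coherent sheaves. The only item the paper makes explicit that you leave implicit is the citation of \cite[Thm 4.11]{F2} justifying that the exponential-degree hypothesis forces $M$ to have constant $j$-rank as a $kE$-module.
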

 
 \begin{proof}
 By \cite[Thm 4.11]{F2}, if $M$ has constant $j$-rank for $\bE_{(r)}$ and has exponential degree $< p^r$, 
 then $M$ has constant $j$-rank for $kE$.  By Proposition \ref{prop:theta-homotopy}, it suffices to replace the
 $(r-1)$-st Frobenius twists of kernel, image, and cokernel of  $(\bP \Theta_{E,M})^j$ by the kernel, image, and
 cokernel of $(\bP \ol{\Theta_{E,M}}^{(r-1)})^j$, and it suffices to replace kernel, image, and cokernel of  
 $(\bP \Theta_{\ul E_{(r)},M})^j$ by kernel, image, and cokernel of  $(\bP \ol{\Theta_{\ul E_{(r)},M}})^j$.
 
 Under the hypothesis that $M$ has exponential degree $< p^r$, we claim that the equality (\ref{eqn:theta-equal})
 implies that the kernel, image, and cokernel of $\bP \ol{\Theta_{E,M}}^{(r-1)}$ are isomorphic as coherent 
 sheaves on $\bP X_E$ to the result of
 applying $(\bP \ell_E^{(r)})^*$ to the  kernel, image, and cokernel of $\bP {\ol \Theta_{\ul E_{(r)},M}}^j$.
Namely, this hypothesis implies that $\bP \ol{\Theta_{E,M}}^{(r-1)}$ equals 
$\bP (1\otimes tr_{E*})(\ol{\Theta_{E,M}}^{(r-1)})$ which by (\ref{eqn:theta-equal}) equals the restriction along
$\bP \ell_E^{(r)}: \bP X_E \to \Proj k[V_r(\ul E)]$ of $\bP \ol{\Theta_{\ul E_{(r)},M}})$.
  \end{proof}

We extend the construction of $\ell_E^{(r)}$ of Proposition \ref{prop:ul-ell} to arbitrary finite groups using 
the exponentiation map $exp$ of Theorem \ref{thm:sobaje}.

\begin{prop}
\label{prop:cln-extend}
Retain the hypotheses and notation of Proposition \ref{prop:cln}. Let $E \subset \bG(\bF_p)$ be an elementary abelian
$p$-group corresponding as in Proposition \ref{prop:warner} to the elementary subalgebra $\epsilon_E \subset \fg_{\bF_p}$
and define the embedding $i_{\ul E}:\ul E \ \hookrightarrow \bG$ as the embedding of the image of the map 
$exp: \epsilon_E \otimes k \to \bG$.   Then 
the composition 
$$(i_{\ul E})_*\circ \ell_E^{(r)}: X_E \ \to \ V_r(\ul E) \ \to \ V_r(\bG)$$
restricts to $(i_{\ul E^\prime})_*\circ \ell_{E^\prime}^{(r)}: X_{E^\prime} \ \to \ V_r(\bG)$ for any $E^\prime \subset E$ in $\cE(\tau)$.

Consequently, the colimit indexed by $E \in \cE(\tau)$ of the maps $(i_{\ul E})_* \circ \ell_E^{(r)}$,
$$\ell_\tau^{(r)} \equiv \varinjlim_{E \in \cE(\tau} ((i_{\ul E})_* \circ \ell_E^{(r)}):  X_\tau \equiv 
\varinjlim_{E \in \cE(\tau)} \bA_{J_E} \ \to \ V_r(\bG)$$
is a well defined, $\tau$-equivariant, graded map which projects to $\ell_\tau^{(r-1)}$ and 
factors through the projection $p_\tau: X_\tau \to Y_\tau$.  

Furthermore, $\ell_\tau^{(r)}$ determines the $\tau$-equivariant map
$$\bP \ell_\tau^{(r)}: \bP Y_\tau \ \to \ \Proj k[V_r(\bG)]$$
which is the composition of a $p$-isogeny and a closed immersion.
\end{prop}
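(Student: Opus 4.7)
The plan is to bootstrap from the elementary abelian case of Proposition \ref{prop:ul-ell} using the colimit descriptions of Section \ref{sec:tau}. The essential compatibility to verify for $E^\prime \subset E$ in $\cE(\tau)$ is that $\ul{E^\prime} \subset \ul E$ as closed subgroups of $\bG$. This follows from Theorem \ref{thm:sobaje}(3): the inclusion $\epsilon_{E^\prime} \otimes k \hookrightarrow \epsilon_E \otimes k$ of commuting restricted Lie subalgebras exponentiates to an inclusion of unipotent subgroups, with additivity of $exp$ on commuting elements ensuring that $i_{\ul E}$ restricts along $V_r(\ul{E^\prime}) \hookrightarrow V_r(\ul E)$ to $i_{\ul{E^\prime}}$. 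Combined with Proposition \ref{prop:ul-ell}(4), this gives the stated compatibility of the composites $(i_{\ul E})_* \circ \ell_E^{(r)}$.

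Granted this compatibility, Proposition \ref{prop:limits} immediately yields the colimit morphism $\ell_\tau^{(r)}: X_\tau \to V_r(\bG)$. That it projects to $\ell_\tau^{(r-1)}$ under the restriction $V_r(\bG) \to V_{r-1}(\bG)$ follows by taking colimits of Proposition \ref{prop:ul-ell}(3), since restriction commutes with each fixed embedding $i_{\ul E}$. Factorization through $p_\tau: X_\tau \to Y_\tau$ follows by taking colimits of Proposition \ref{prop:ul-ell}(2). For $\tau$-equivariance, I would use that conjugation by $x \in \tau$ on $\cE(\tau)$ sends $E$ to $E^x$, and that by the $\bG$-equivariance of $exp$ in Theorem \ref{thm:sobaje}, conjugation by $x$ in $\bG$ carries $\ul E$ to $\ul{E^x}$ compatibly with the linear isomorphism $\epsilon_E \otimes k \cong \epsilon_{E^x} \otimes k$ induced by $\Ad(x)$. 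Warner's equivalence (Proposition \ref{prop:warner}) ensures this is consistent with the $\tau$-action on indexing categories used to define both $X_\tau$ and the colimit, and the grading is preserved at each finite stage by Proposition \ref{prop:ul-ell}(5), so the colimit is graded.

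The final and most delicate assertion is the projective one. Having factored $\ell_\tau^{(r)}$ through $p_\tau$, one obtains a graded morphism $Y_\tau \to V_r(\bG)$ and hence $\bP \ell_\tau^{(r)}: \bP Y_\tau \to \Proj k[V_r(\bG)]$. Each stage $\bP \ell_E^{(r)}: \bP Y_E \to \Proj k[V_r(\ul E)]$ is a $p$-isogeny onto its scheme-theoretic image followed by a closed immersion, by Proposition \ref{prop:ul-ell}(5). Moreover, the closed subgroup embedding $\ul E \hookrightarrow \bG$ induces a closed immersion $V_r(\ul E) \hookrightarrow V_r(\bG)$ (since the 1-parameter subgroups of height $\leq r$ of $\ul E$ are exactly those of $\bG$ factoring through $\ul E$, and the corresponding graded algebra map is surjective).

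The hard part is showing that these scheme-theoretic images glue compatibly as $E$ varies, so that their colimit is a closed subscheme of $\Proj k[V_r(\bG)]$ onto which $\bP \ell_\tau^{(r)}$ is a $p$-isogeny; this is where I expect the main obstacle to lie. My plan is to bypass the geometric gluing by working algebraically: factor the graded ring map $k[V_r(\bG)] \to B_\tau$ induced by $\ell_\tau^{(r)}$ through its image $R \subset B_\tau$, and check $E$ by $E$ that the extension $R \hookrightarrow B_\tau$ is a $p$-isogeny in the sense of \cite{Q1} (each element of $B_\tau$ has a $p$-th power in $R$, by Proposition \ref{prop:ul-ell}(5) applied to the restrictions $B_\tau \to S^\bu((J_E/J_E^2)^*)$ which separate points by Proposition \ref{prop:EmodE2}). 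The surjection $k[V_r(\bG)] \twoheadrightarrow R$ then gives the closed immersion factor after passing to $\Proj$, exactly as in the proof of Proposition \ref{prop:mod-tau}.
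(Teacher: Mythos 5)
Your proposal follows the paper's proof essentially step for step: the compatibility for $E' \subset E$ via Sobaje's exponential together with Proposition \ref{prop:ul-ell}(4), the well-definedness, grading, factorization through $p_\tau$, and compatibility with restriction to $V_{r-1}$ all obtained by passing Proposition \ref{prop:ul-ell}(2)--(5) through the colimit, the $\tau$-equivariance via the conjugation identity $c_x \circ i_{\ul E} = i_{\ul{E^x}} \circ c_x$ and the naturality of $V_r(-)$, and the final factorization of $\bP \ell_\tau^{(r)}$ as a $p$-isogeny followed by a closed immersion coming from the surjection-then-injection factorization of the graded algebra map. The only divergence is in the last step, where the paper simply ``takes the colimit of the factorizations of each $(\ell_E^{(r)})^*$'' from Proposition \ref{prop:ul-ell}(5) while you factor the global map $k[V_r(\bG)] \to B_\tau$ through its image and check the $p$-isogeny condition $E$ by $E$ --- the same argument in a slightly different order, and no less (nor more) detailed than the paper's own treatment of that point.
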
 

\begin{proof}
The fact that $(i_{\ul E})_*\circ \ell_E^{(r)}$ restricts to $(i_{\ul E^\prime})_*\circ \ell_{E^\prime}^{(r)}$ is a consequence
of Proposition \ref{prop:ul-ell}(4) and naturality of $exp$ with respect to an inclusion of elementary subalgebras of
$\fg$.  This implies that the colimit $\ell_\tau^{(r)} $ is well defined.  Since each $\ell_E^{(r)}$ is graded and
since the grading on $V_r(\bG)$ is functorial with respect to $\bG$, we conclude that $\ell_\tau^{(r)} $ is a map of 
graded $k$-algebras.  Since each $\ell_E^{(r)}$ factors through the projection $X_E \to Y_E$ by Proposition 
\ref{prop:ul-ell}, we conclude that $\ell_\tau^{(r)} $ factors through the projection $X_\tau \to Y_\tau$.

To show that $\ell_\tau^{(r)} $ is $\tau$-equivariant we verify the commutativity for each $x \in \tau$ of the
following diagram
 \begin{equation}
 \label{diag:tau-equi}
\begin{xy}*!C\xybox{%
\xymatrix{
X_E \ar[d]_{c_x} \ar[r]^{\ell_E^{(r)}} & V_r(\ul E) \ar[d]^{c_x} \ar[r]^{i_{E*}} & V_r(\bG) \ar[d]^{c_x}  \\
X_{E^x}  \ar[r]^{\ell_{E^x}^{(r)}} & V_r(\ul E^x)  \ar[r]^{i_{E^x*}} & V_r(\bG) 
 }
}\end{xy}
\end{equation}
where $c_x$ denotes conjugation by $x$.  The commutativity of the left square is seen by examining the
definition of $\ell_E^{(r)}$ in (\ref{eqn:ell-r}), whereas the commutativity of the right square follows by
naturality of $V_r(-)$ and the equality $c_x \circ i_E = i_{E^x} \circ c_x$.

Since $\ell_\tau^{(r)}$ is a map of graded $k$-algebras, it induces $\bP  \ell_\tau^{(r)}: \bP Y_\tau \ \to \ \Proj k[V_r(\bG)].$
Taking the colimit of the factorizations of each  $(\ell_E^{(r)})^*$ given in the proof of Proposition \ref{prop:ul-ell}
provides the asserted factorization of $\bP \ell_\tau^{(r)}$.
\end{proof}

 We now extend Proposition \ref{prop:E-compare} to $\bG$.
 The theorem below sharpens relationships between the restrictions to $\bG(\bF_p)$
 and to $\bG_{(1)}$ of a rational $\bG$-module established for example in \cite{LN}, \cite{CLN} and later 
investigations of the relationships between the restrictions of $M$ to $\bG(\bF_p)$ and $\bG_{(r)}$.
Thanks to Corollary \ref{cor:inj-restr} and the construction of $\ell_\tau^{(r)}$ in Proposition \ref{prop:cln-extend},
the proof follows ``immediately" from Proposition \ref{prop:E-compare}.
 
 \begin{thm}
\label{thm:Lie-compare}
Assume that $\bG$ satisfies the conditions of Hypothesis \ref{hypoth:G}.  Let $M$ be a finite
dimensional rational $\bG$-module whose restriction to $\bG_{(r)}$ has non-zero constant $j$-rank
and which has exponential degree $< p^r$.  Thus,  the restriction of $M$ to $\tau \equiv \bG(\bF_p)$ has constant $j$-rank 
equal to that of $M$ restricted to $\bG_{(r))}$.

The classes in $K_0(\bU_\tau)$  of the $(r-1)$-st Frobenius twists of the vector bundles 
$$ker\{ (\bP \Theta_{\tau,M})^j \}, \quad coker\{ (\bP \Theta_{\tau,M})^j \}, \quad im\{ (\bP \Theta_{\tau,M})^j\} $$
on $\bU_\tau$ constructed in Theorem \ref{thm:projbundle} are equal to the images under
$$(\bP \ell_\tau^{(r)})^*: K_0(\Proj k[V_r(\bG)]) \ \to \ K_0(\bU_\tau)$$ 
of the classes of the corresponding vector bundles 
$$ker\{ \bP \Theta_{\ul E_{(r)},M}^j  \}, \quad 
coker\{ \bP \Theta_{\ul E_{(r)},M}^j  \}, \quad im\{ \bP \Theta_{\ul E_{(r)},M}^j  \}$$
on $\Proj k[V_r(\bG)] $  constructed in \cite{FP4}. 
\end{thm}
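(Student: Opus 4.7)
The plan is to reduce the asserted $K_0(\bU_\tau)$-equality to the elementary abelian case (Proposition \ref{prop:E-compare}) by restricting along the maximal elementary abelian $p$-subgroups of $\tau$. Let $E_1,\ldots,E_s$ be such a list of maximal subgroups, with associated closed immersions $\Phi_i: \bU_{E_i} \to \bU_\tau$. My first step is to invoke Corollary \ref{cor:inj-restr}, which asserts that the restriction map $K_0(\bU_\tau) \to \bigoplus_{i=1}^s K_0(\bU_{E_i})$ is injective. It therefore suffices to verify, for each $i$ and separately for kernel, cokernel, and image, that $\Phi_i^*$ applied to each side of the claimed equality produces the same element of $K_0(\bU_{E_i})$.

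For the left-hand side, the naturality clause at the end of Theorem \ref{thm:projbundle} gives directly
$$\Phi_i^*(ker\{(\bP\Theta_{\tau,M})^j\}) \ = \ ker\{(\bP\Theta_{E_i,M})^j\}$$
(and analogously for cokernel and image); since the $(r{-}1)$-st Frobenius twist is functorial and commutes with pullback, the same identity persists after twisting. For the right-hand side, I would apply Proposition \ref{prop:cln-extend}: because $\bP\ell_\tau^{(r)}$ is constructed as the colimit of the maps $(i_{\ul E})_* \circ \bP\ell_E^{(r)}$ over $E \in \cE(\tau)$, the square with horizontal maps $\bP Y_{E_i} \to \bP Y_\tau$ and $(i_{\ul E_i})_* : \Proj k[V_r(\ul E_i)] \to \Proj k[V_r(\bG)]$ commutes. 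Coupled with the naturality of the \cite{FP4} bundle construction on $\Proj k[V_r(-)]$ under the inclusion $\ul E_i \hookrightarrow \bG$ of infinitesimal group schemes (which exchanges the universal $p$-nilpotent operators $\bP\Theta_{\bG_{(r)}}$ and $\bP\Theta_{\ul E_{i,(r)}}$), this identifies $\Phi_i^* \circ p_\tau^* \circ (\bP\ell_\tau^{(r)})^*$ applied to the class of $ker\{\bP\Theta_{\bG_{(r)},M}^j\}$ with $p_{E_i}^* \circ (\bP\ell_{E_i}^{(r)})^*$ applied to the class of $ker\{\bP\Theta_{\ul E_{i,(r)},M}^j\}$, and similarly for cokernel and image.

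At this stage, the identity in $K_0(\bU_{E_i})$ that remains to be verified is precisely the conclusion of Proposition \ref{prop:E-compare} for $E_i$. To apply it I would check that its hypotheses are inherited from those assumed on $\bG$: since $\ul E_{i,(r)} \subset \bG_{(r)}$, the constant $j$-rank of $M|_{\bG_{(r)}}$ forces constant $j$-rank of $M|_{\ul E_{i,(r)}}$ (as the $\pi$-points of $\ul E_{i,(r)}$ form a sub-family of those of $\bG_{(r)}$); and since any $1$-parameter subgroup $\bG_a \to \ul E_i$ composes with $\ul E_i \hookrightarrow \bG$ to yield a $1$-parameter subgroup of $\bG$, the coaction inequality defining exponential degree $< p^r$ for $M$ as a $\bG$-module passes to $M$ as a $\ul E_i$-module.

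The main obstacle to turning this outline into a rigorous proof is the second step above: confirming the precise compatibility of the \cite{FP4} bundle construction with restriction along the infinitesimal inclusion $\ul E_{i,(r)} \hookrightarrow \bG_{(r)}$, expressed as a pullback identity for the universal operator $\bP\Theta_{\bG_{(r)}}$ under the morphism $(i_{\ul E_i})_*$ on $\Proj k[V_r(-)]$. This naturality is the analogue, for the infinitesimal setting of \cite{SFB1} and \cite{FP4}, of Proposition \ref{prop:tau-natural}; once invoked, assembling the three commutative squares (one for $\Phi_i^*$, one for the $(i_{\ul E_i})_*$-compatibility, one from Proposition \ref{prop:E-compare}) finishes the proof by a straightforward diagram chase.
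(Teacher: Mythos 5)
Your proposal is correct and follows essentially the same route as the paper's own proof: reduce via the injectivity of $K_0(\bU_\tau) \to \bigoplus_i K_0(\bU_{E_i})$ from Corollary \ref{cor:inj-restr}, use naturality of the constructions to restrict both sides to each maximal $E_i$, and conclude by Proposition \ref{prop:E-compare}. The compatibility of the \cite{FP4} construction with the inclusion $\ul E_{i,(r)} \hookrightarrow \bG_{(r)}$ that you flag as the remaining obstacle is exactly what the paper subsumes under the phrase ``naturality of our constructions,'' so your more explicit accounting of it is a reasonable elaboration rather than a divergence.
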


\begin{proof}
Because $M$ has exponential degree $< p^r$, $M$ has constant $j$-rank for $\tau$ whenever $M$ has constant $j$-rank for $\bG_{(r)}$ by \cite[Cor 4.14]{F2}.

Corollary \ref{cor:inj-restr} tells us that the natural map $K_0(\bU_\tau) \to \bigoplus_{i=1}^s K_0(\bU_{E_i})$ is 
injective, where $\{ E_1,\ldots,E_s \}$ are the maximal elements of $\cE(\tau)$.   Naturality of our constructions
implies that it suffices to verification the asserted equalities for each $E_i$ in place of $\tau$.  This is the 
content of Proposition  \ref{prop:E-compare}.
\end{proof}

\begin{remark}
The correspondence in Proposition \ref{prop:elem} is not associated to maps of (finite) group schemes, rather uses an
identification of algebras of the non-isomorphic Hopf algebras $kE$ and $k\bG_a^{\times s}$.  The necessity of 
considering higher order Frobenius kernels as in Theorem \ref{thm:Lie-compare} is particularly evident
when considering a rational $\bG$-module $M$ which is the Frobenius twist $M= N^{(1)}$ of another rational $G$-module $N$.
In this case, the restriction of $M$ to $\bG_{(1)}$ is trivial.
\end{remark}

\vskip .2in


\section{Considerations for $G\rtimes \tau$}
\label{sec:Gtau}

Throughout this section, $G$ will denote an infinitesimal group scheme (i.e., a group scheme whose coordinate
algebra is a local, finite dimensional $k$-algebra whose maximal ideal is nilpotent)  of height $\leq r$ for some 
$r > 0$ and $\tau$ will denote a finite group.  If $k$ is algebraically closed, then any finite group 
scheme over $k$ is of the form $G \rtimes \tau$, where
$G$ is the connected component of the finite group scheme and $\tau$ is its group of connected components.
In this section, we associate vector bundles to modules of constant $j$-type for such 
semi-direct products $G \rtimes \tau$.

We first consider the case that the action of $\tau$ on $G$ is trivial; in other words, we consider $G\times \tau$.

We recall the map of $k$-algebras
\begin{equation}
\label{eqn:epsilon}
\epsilon: k[t]/t^p \ \to \ k\bG_{a(r)} = k[u_0,\ldots,u_{r-1}]/(u_i^p), \quad t \mapsto u_{r-1};
\end{equation}  
this is a Hopf algebra map if and only if $r=1$.
For any $K$-point $\psi$ of the affine scheme $V_r(G)$ corresponding to the 1-parameter subgroup 
$\psi: \bG_{a(r),K} \to KG$, we associate the following $\pi$-point of $G$
$$\alpha_\psi: K[t]/t^p \ = \  K\bG_{a(1)} \ \stackrel{\epsilon}{\to}
K\bG_{a(r)} \ \stackrel{\psi_\alpha}{\to} \ KG.$$
(Here, and in what follows, we denote by $kG$ the $k$-linear dual of the coordinate algebra $k[G]$
of $G$.)

For any $K$-point $(\psi,u)$ of $V_r(G) \times X_\tau$, we associate 
\begin{equation}
\label{eqn:alpha-psi-u}
\alpha_{\psi,u}: K[t]/t^p \ \to \ K(G\times \tau), \quad t \mapsto \alpha_\psi(t) \otimes 1 + 1 \otimes \alpha_u(t)  
\in KG \otimes_K KE \ \subset K(G\times \tau,)
\end{equation}
where $E \subset \tau$ is some elementary abelian $p$-subgroup such that 
$u \in J_E\otimes K$ is a $K$ point of $X_E = \bA_{J_E} \ \subset \ X_\tau$ and 
$\alpha_u: K[t]/t^p \to KE$ sends $t$ to $u$.  

\begin{defn}
\label{defn:Gtimestau}
We set
$$A_{G\times \tau} \ \equiv \  k[V_r(G)] \otimes A_\tau, \quad B_{G\times \tau} \ \equiv \  k[V_r(G)] \otimes B_\tau$$
so that 
$$X_{G\times \tau} \ \equiv \Spec A_{G\times \tau} \ =  \ V_r(G) \times X_\tau, \quad 
Y_{G\times \tau} \ \equiv \Spec B_{G\times \tau} \ =  \ V_r(G) \times Y_\tau.$$
Similarly, we define
$$A_{G\times \tau}^{(2)} \ \equiv \  k[V_r(G)] \otimes A_\tau^{(2)}, \quad 
X_{G\times \tau}^{(2)} \ \equiv \Spec A_{G\times \tau}^{(2)}.$$
We recall that $k[V_r(G)]$ admits a natural grading (see \cite{SFB1}) and thus so does $A_{G\times \tau}$.

We  define 
$$\bP X_{G\times \tau} \ \equiv \ \Proj(A_{G\times \tau} ), \quad \bU_{G\times \tau} \ \equiv \ 
\Proj A_{G\times \tau} \backslash \Proj A_{G\times \tau}^{(2)}.$$
Finally, we equip
$$X_{G\times \tau}, \quad X^{(2)}_{G\times \tau}, \quad Y_{G\times \tau}, 
\quad X_{G\times \tau} \backslash X^{(2)}_{G\times \tau}; \quad
\bP X_{G\times \tau}, \quad \bP X^{(2)}_{G\times \tau}, \quad \bP Y_{G\times \tau}, \quad \bU_{G\times \tau}$$
with the actions of $\tau$ determined by the trivial action on $V_r(G)$ and the action on $X_\tau$ determined by 
the action of $\tau$ on the partially ordered set $\cE(\tau)$ of elementary abelian $p$-subgroups of $\tau$.
\end{defn}

\vskip .1in

We recall the natural map of finitely generated $k$-algebras $\psi: H^\bu(G,k) \ \to \ k[V_r(G)]$ of \cite{SFB1} which is a
``$p$-isogeny" as proved in \cite[Thm 5.2]{SFB2}; in other words, every element of the kernel of $\psi$ 
has some $p$-th power 0 and every element is $k[V_r(G)]$ has some $p$-th power in the image of $\psi$.  This
implies that the morphism on prime spectral $\Spec k[V_r(G)] \to \Spec H^\bu(G,k)$ induced by $\psi$ is a homemorphism.
We also recall that Quillen's theorem \cite[Thm 7.1]{Q1} asserts that the restriction map $\rho: H^\bu(\tau,k) \ \to \ (\varprojlim_{E \in \cE(\tau)} H^*(E,k))^\tau = (A_\tau)^\tau$ is a $p$-isogeny.  

The tensor product of the  $p$-isogenies $\psi: H^\bu(G,k) \ \to \ k[V_r(G)]$ and 
$\rho: H^\bu(\tau,k) \ \to \ (\varprojlim_{E \in \cE(\tau)} H^*(E,k)^\tau)$ determines 
$p$-isogenies
$$(A_{G \times \tau})^\tau = k[V_r(G)] \otimes (A_\tau)^\tau \ \leftarrow \ H^\bu(G,k) \otimes H^\bu(\tau,k) 
\ \to \ H^\bu(G\times \tau,k),$$
which induce a natural homeomorphism $\Spec H^\bu(G\times \tau,k) \ \stackrel{\approx}{\to} \ (Y_\tau/\tau) \times V_r(G).$

These $p$-isogenies are maps of graded algebras, thereby determining morphisms
\begin{equation}
\label{eqn:proj-homeo}
\Pi(G\times \tau) \simeq \Proj H^\bu(G\times \tau,k) \ \to \ \Proj (H^\bu(G,k) \otimes H^\bu(\tau,k)) \ \leftarrow
(\bP Y_{G\times \tau})/\tau
\end{equation}
which are homoemorphisms.

\begin{defn}
\label{defn:Theta-Gtimestau}
Define
$$\Theta_{G\times \tau} \ = \ p_1^*(\Theta_{G}) + p_2^*(\Theta_\tau^{(r-1)}) \  \in \ 
(k[V_r(G)] \otimes kG) \otimes (A_\tau\otimes k\tau) \simeq A_{G\times \tau} \otimes k(G\times \tau),$$
where $\Theta_G \in k[V_r(G)] \otimes kG$ is the universal $p$-nilpotent operator constructed in \cite{FP4}, 
$\Theta_\tau \in A_\tau \otimes k\tau$ is given in Definition \ref{defn:Theta-tau}, and $\Theta_\tau^{(r-1)} = F^{r-1*}(\Theta_\tau)$
is the image of $\Theta_\tau$ under the map $F^{r-1} \times 1: A_\tau \otimes k\tau \to A_\tau \otimes k\tau$  (in
other words, the $(r-1)$-st Frobenius twist of $\Theta_\tau$).

For any $k(G\times \tau)$-module $M$, define the $A_{G\times \tau}$-linear endomorphism of $A_{G\times \tau} \otimes M$
$$\Theta_{G\times\tau,M}: A_{G\times \tau} \otimes M \to A_{G\times \tau} \otimes M, \quad 1\otimes m \mapsto 
\Theta_G(m) + \Theta_\tau^{(r-1)}(m)$$
and similarly define the $\cO_{\bP X_{G\times \tau}}$-linear map of sheaves on $\Proj(A_{G\times \tau})$ by 
$$\bP \Theta_{G\times \tau,M}: \cO_{\bP X_{G\times \tau}} \otimes M \ \to \ (\cO_{\bP X_{G\times \tau}} \otimes M)(p^{r-1}),
\quad m \to \bP \Theta_G(m) + \bP \Theta_\tau^{(r-1)} (m).$$
(See \cite[Prop 2.1]{FP4} for the graded degree of $\Theta_G$.)
\end{defn}
\vskip .1in 

We next proceed to extend Proposition \ref{prop:Theta-tauM} to finite group schemes of the form $G\times \tau$.

\begin{prop}
\label{prop:Theta-GtimestauM}
The specialization of the action of $\Theta_{G\times\tau,M}$ on $M \otimes A_{G\times \tau}$ along some geometric point \\
$\xi = (\psi,u): A_{G\times\tau} \to K$ is given by
$$\Theta_{G\times\tau,M,\xi}: K\otimes M \ \to  \ K \otimes M, \quad  m \mapsto (\alpha_{\psi,u}(t))\cdot m 
=  \alpha_\psi(t)\cdot m + \alpha_{F^{r-1}(u)}(t) \cdot m.$$

For any $k(G\times k\tau)$-module $M$, $\Theta_{G\times\tau,M}$ is $\tau$-equivariant and has $p$-th power equal to 0. 
\end{prop}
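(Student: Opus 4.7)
The plan is to exploit the decomposition $\Theta_{G\times\tau} = p_1^*(\Theta_G) + p_2^*(\Theta_\tau^{(r-1)})$ and treat the two summands separately, using the crucial observation that they commute as endomorphisms of $A_{G\times\tau}\otimes M$. This commutativity holds because $G\times\tau$ is an actual direct product, so $k(G\times\tau) = kG \otimes k\tau$, and elements of the subalgebras $kG\otimes 1$ and $1\otimes k\tau$ commute with each other.

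For the specialization formula at $\xi = (\psi,u)$, I would argue factor-wise. The first summand $p_1^*(\Theta_G)$ depends only on the $k[V_r(G)]$-variable, so its specialization at $\xi$ reduces to the specialization of $\Theta_G$ at $\psi$, which by \cite{SFB1,FP4} acts on $K\otimes M$ as multiplication by $\alpha_\psi(t)\in KG$. The second summand $p_2^*(\Theta_\tau^{(r-1)})$ depends only on the $A_\tau$-variable, and because applying $F^{r-1}\otimes 1$ before evaluating at $u$ is the same as evaluating at the Frobenius-twisted point $F^{r-1}(u)\in X_\tau(K)$, its specialization coincides with the specialization of $\Theta_\tau$ at $F^{r-1}(u)$; by Proposition \ref{prop:Theta-tauM}, this is multiplication by $\alpha_{F^{r-1}(u)}(t)\in K\tau$. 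Summing yields $\alpha_{\psi,u}(t)\cdot m$ as asserted.

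For $\tau$-equivariance, the $\tau$-action is trivial on the $k[V_r(G)]\otimes kG$ tensor factor of $A_{G\times\tau}\otimes k(G\times\tau)$ (since $\tau$ acts trivially on $G$ in the direct product), so $p_1^*(\Theta_G)$ is automatically $\tau$-invariant. The summand $p_2^*(\Theta_\tau^{(r-1)})$ is $\tau$-equivariant by Proposition \ref{prop:Theta-tauM}, once one notes that the Frobenius endomorphism of $A_\tau$ commutes with the $\tau$-action because $\tau$ acts by $k$-algebra automorphisms, hence commutes with taking $p$-th powers.

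For $p$-nilpotence, the commutativity of $p_1^*(\Theta_{G,M})$ and $p_2^*(\Theta_\tau^{(r-1)})_M$ as endomorphisms of $A_{G\times\tau}\otimes M$ permits the characteristic-$p$ identity
$$\Theta_{G\times\tau,M}^p \ = \ (p_1^*(\Theta_{G,M}))^p \ + \ (p_2^*(\Theta_\tau^{(r-1)})_M)^p.$$
The first term vanishes by the $p$-nilpotence of $\Theta_{G,M}$ from \cite{FP4}, and the second vanishes because Frobenius twist preserves $p$-nilpotence, combined with Proposition \ref{prop:pnilp-affine}. The main obstacle is to confirm rigorously that the two operators commute not merely as elements of $A_{G\times\tau}\otimes k(G\times\tau)$, but as module endomorphisms; this reduces to the fact that the $G$-action and the $\tau$-action on $M$ commute, precisely because $M$ is a module over the direct product $G\times\tau$.
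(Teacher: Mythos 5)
Your proposal is correct and follows essentially the same route as the paper: factor-wise identification of the specialization, triviality of the $\tau$-action on the $G$-factor plus Proposition \ref{prop:Theta-tauM} for equivariance, and the commutativity of the two summands (coming from $kG\otimes 1$ and $1\otimes k\tau$ commuting in $k(G\times\tau)$) to reduce $p$-nilpotence to that of each summand. The only cosmetic difference is that you apply the characteristic-$p$ binomial identity globally, whereas the paper phrases the same argument after Zariski localization at scheme points.
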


\begin{proof}
The identification of the specialization of the action of $\Theta_{G\times\tau,M}$ follows immediately from the facts that the
specialization of the action of $\Theta_{G,M}$ on $M\otimes k[V_r(G)]$ at $\psi$ is given by multiplication by 
$\alpha_\psi(t)$ and the specialization of the action of $\Theta_{\tau,M}$ on $M\otimes A_\tau$ at $u \in J_E \subset X_\tau$
is given by multiplication by $u$.

Since $\tau$ acts trivially on $G$ (and therefore on $KG$ and $k[V_r(G)]$) and since 
$\Theta_{\tau,M}$ is $\tau$-equivariant by Propositon \ref{prop:Theta-tauM}, 
we conclude that $\Theta_{G\times\tau,M}$ is also $\tau$-equivariant for this is a ``linear
extension" of the sum of $\Theta_{\tau,M}$ and $\Theta_{G\times\tau,M}$.

Since elements of $G$ commute with elements of $\tau$ in $G\times \tau$, we conclude that the (Zariski) localization
of $(\Theta_{G\times \tau,M})^p$ at any scheme point $\xi = (\psi,u): A_{G\times\tau} \to k(\psi,u)$ is given by the 
sum of multiplication by the Zariski localization of $(\Theta_{G,M})^p$  and of $(\Theta_{\tau,M}^{(r-1)})^p$, both of which are 0.
\end{proof}

Since $\Theta_{G \times \tau}$ is homogeneous of degree $p^{r-1}$ in $A_{G\times \tau} \otimes k(G\times \tau)$
viewed as a graded module over the (commutative) graded algebra $A_{G\times \tau}$ with $k(G\times \tau)$
placed in degree 0, $\Theta_{G \times \tau}$ determines 
$$\bP \Theta_{G\times \tau}: \cO_{\bP X_{G\times \tau}} \ \to \  \cO_{\bP X_{G\times \tau}}(p^{r-1})$$
and restricts to
$$\bP \Theta_{G\times \tau}: \cO_{\bU_{G\times \tau}} \ \to \  \cO_{\bU_{G\times \tau}}(p^{r-1})$$

The following theorem is the evident generalization of Theorem \ref{thm:projbundle}; the proof of the latter theorem
applies to prove Theorem \ref{thm:Gtimes-tau} with only minor 
 ``notational changes" replacing $\bU_\tau, \ \bP X_\tau, \ \Theta_\tau, \ A_\tau$ by their generalizations
$\bU_{G\times \tau}, \ \bP X_{G\times \tau}, \ \Theta_{G\times \tau}, \\ A_{G\times \tau}$, and 
adjusting references accordingly.

\begin{thm}
\label{thm:Gtimes-tau}
Let $\tau$ be a finite group and $G$ an infinitesimal group scheme $G$ of height $\leq r$.
If $M$ is a finite dimensional $G \times \tau$-module of constant $j$-rank, then the restrictions 
to $\bU_{G\times \tau}$ of the $\tau$-equivariant coherent sheaves on $\bP X_{G\times \tau}$
$$ker\{ (\bP \Theta_{G\times \tau,M})^j\}, \quad coker\{ (\bP \Theta_{G\times \tau,M})^j\}, 
\quad im\{ (\bP\Theta_{G\times \tau,M})^j\} $$
are $\tau$-equivariant vector bundles on $\bU_{G\times \tau}$.
\end{thm}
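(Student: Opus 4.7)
The plan is to mimic almost verbatim the argument used to establish Theorem \ref{thm:projbundle}, substituting $\bU_{G\times \tau}, \bP X_{G\times \tau}, \Theta_{G\times \tau}, A_{G\times \tau}$ for the earlier $\bU_\tau, \bP X_\tau, \Theta_\tau, A_\tau$. The first step is to confirm that $\bP \Theta_{G\times \tau,M}$ is actually a map of $\tau$-equivariant coherent sheaves on $\bP X_{G\times \tau}$ and that its $p$-fold iterate is zero. The $\tau$-equivariance and the $p$-nilpotence of the underlying operator $\Theta_{G\times \tau,M}$ are precisely the content of Proposition \ref{prop:Theta-GtimestauM}, and these properties descend to the projectivization since the graded module $A_{G\times \tau}\otimes M$ is generated in degree zero by $M$. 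Therefore $\ker\{(\bP \Theta_{G\times \tau,M})^j\}$, $\mathrm{coker}\{(\bP \Theta_{G\times \tau,M})^j\}$, and $\mathrm{im}\{(\bP \Theta_{G\times \tau,M})^j\}$ are $\tau$-equivariant coherent subsheaves (respectively quotient sheaves) of $\cO_{\bP X_{G\times \tau}}\otimes M$ or its Serre twist, because $\tau$-equivariant coherent sheaves form an abelian category.

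The main work is to verify local freeness on $\bU_{G\times \tau}$. I will use the criterion of Proposition \ref{prop:locfree}: it suffices to show that the fiber dimensions of $\mathrm{im}\{(\bP \Theta_{G\times \tau,M})^j\}$, $\ker\{(\bP \Theta_{G\times \tau,M})^j\}$, and $\mathrm{coker}\{(\bP \Theta_{G\times \tau,M})^j\}$ are constant along each connected component of $\bU_{G\times \tau}$. By the specialization formula in Proposition \ref{prop:Theta-GtimestauM}, the fiber of $\Theta_{G\times \tau,M}$ at a geometric point $\xi = (\psi,u)$ is given by multiplication on $K\otimes M$ by $\alpha_\psi(t) + \alpha_{F^{r-1}(u)}(t)$, i.e.\ by the element $\alpha_{\psi,F^{r-1}(u)}(t)\in K(G\times \tau)$ arising from (\ref{eqn:alpha-psi-u}). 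Thus the rank of the $j$-th fiber of $\bP \Theta_{G\times \tau,M}$ equals the rank of the $j$-th power of this $\pi$-point acting on $M\otimes K$. Once this rank is shown to be independent of $\xi\in \bU_{G\times \tau}$ under the constant $j$-rank hypothesis, the two short exact sequences
\begin{equation*}
0 \to \ker\{(\bP \Theta_{G\times \tau,M})^j\} \to \cO_{\bU_{G\times \tau}} \otimes M \to \mathrm{im}\{(\bP \Theta_{G\times \tau,M})^j\} \to 0,
\end{equation*}
\begin{equation*}
0 \to \mathrm{im}\{(\bP \Theta_{G\times \tau,M})^j\} \to \cO_{\bU_{G\times \tau}}(jp^{r-1})\otimes M \to \mathrm{coker}\{(\bP \Theta_{G\times \tau,M})^j\} \to 0
\end{equation*}
locally split (because the image is locally free), allowing us to conclude local freeness of kernel and cokernel as in Theorem \ref{thm:affine-vec}.

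The only genuinely new point—and the one I expect to be the main obstacle—is verifying that for $\xi = (\psi,u)$ varying over $\bU_{G\times \tau}$, the specialization $\alpha_{\psi,F^{r-1}(u)}$ is a flat $\pi$-point of $G\times \tau$, so that the constant $j$-rank hypothesis of $M$ can legitimately be invoked. The relevant observation is that $\bU_{G\times \tau}$ is defined as the complement of $\bP X_{G\times \tau}^{(2)}$ where $A_{G\times \tau}^{(2)} = k[V_r(G)]\otimes A_\tau^{(2)}$, so $\xi \notin \bP X_{G\times \tau}^{(2)}$ forces a homogeneous component of $u$ to lie outside $\bigcap_E J_E^2$; combined with the flatness of $K\tau$ over $KE$ for every $E\in \cE(\tau)$ and the factorization property recalled immediately before Definition \ref{defn:j-rank}, this yields the flatness of $\alpha_{\psi,F^{r-1}(u)}$. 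The Frobenius twist $F^{r-1}$ is harmless here since it is a bijection on $K$-points and preserves the property of lying outside $J_E^2$. Once this flatness is in hand, the remainder of the argument is the same as for Theorem \ref{thm:projbundle}, and the naturality with respect to restriction along subgroup inclusions carries over word-for-word.
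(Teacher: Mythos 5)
Your proposal follows the paper's own (very terse) argument: the paper simply asserts that the proof of Theorem \ref{thm:projbundle} carries over to $G\times\tau$ with notational changes, which is exactly the strategy you execute, using Proposition \ref{prop:Theta-GtimestauM} for equivariance and $p$-nilpotence and Proposition \ref{prop:locfree} plus the two short exact sequences for local freeness. Your extra care in verifying that the specializations $\alpha_{\psi,F^{r-1}(u)}$ at points of $\bU_{G\times\tau}$ are flat $\pi$-points (so that the constant $j$-rank hypothesis genuinely applies) fills in a detail the paper leaves implicit, and your handling of it is correct.
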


We now consider a general semi-direct product $G\rtimes \tau$ with $G$ infinitesimal and $\tau$ finite.
 We utilize the limits $\varinjlim_{(E^\prime, E) \subset \tau}$ and 
$\varprojlim_{(E^\prime, E) \subset \tau}$ indexed by the partially ordered subset of 
pairs $(E^\prime \subset E)$ of elementary abelian $p$-subgroups of $\tau$; a map of such
pairs $(F^\prime \subset F) \to (E^\prime \subset E)$ is a quadruple 
$F^\prime \subset E^\prime \subset E \subset  F$.  
Observe that $\tau$ acts on this indexing category, with $g \in \tau$
sending  $(E^\prime \subset E)$ to $((E^\prime )^g\subset E^g)$.  

We consider
the natural map from pairs $(E^\prime, E) \subset \tau$ to subgroup schemes of $G\rtimes \tau$:
$$(E^\prime, E) \subset \tau \ \ \mapsto \ \ G^E \times E^\prime,$$
where $G^E \subset G$ is the centralizer of $E$ in $G$.   Since $G^E \subset G$
is also infinitesimal, it is connected.  Observe that 
$$(F^\prime \subset F) \to (E^\prime \subset E) \quad \Rightarrow \quad G^F \times F^\prime \ 
\hookrightarrow \ G^E \times E^\prime.$$

Proposition 4.12 of \cite{FP2} (see also Corollary 5.4 of \cite{FP1}) asserts that the $\Pi$-point space
$\Pi(G\rtimes \tau)$ (isomorphic as schemes to $\Proj H^\bu(G\rtimes \tau,k)$) is given by the following:
$$(\varinjlim_ {(E^\prime, E) \subset \tau} \Pi(G^E \times E^\prime))/\tau \ \stackrel{\sim}{\to} \ \Pi(G\rtimes \tau).$$
This suggests extending Theorem \ref{thm:Gtimes-tau} by using Theorem \ref{thm:Gtimes-tau}
and applying $\varinjlim_{(E^\prime, E) \subset \tau}(-)$, which we now proceed to do.

\begin{prop}
\label{prop:FP2}
Consider $G\rtimes \tau$ with $G$ infinitesimal of height $\leq r$ for some $r > 0$ and $\tau$ finite.
The limits
$$A_{G\rtimes \tau} \ \equiv \varprojlim _{(E,E^\prime)\subset \tau} A_{G^E \times E^\prime}, \quad
B_{G\rtimes \tau} \ \equiv \varprojlim _{(E,E^\prime)\subset \tau} B_{G^E \times E^\prime}, \quad
A^{(2)}_{G\rtimes\tau} \ \equiv \varprojlim _{(E,E^\prime) \subset \tau} A^{(2)}_{G^E \times E^\prime}$$
are finitely generated commutative $k$-algebras equipped with actions of $\tau$ and natural gradings.  These algebras are
the coordinate algebras of the affine schemes representing the indicated colimits of schemes
$$X_{G \rtimes \tau} \simeq \varinjlim_{(E,E^\prime)\subset \tau} X_{G^E \times E^\prime} , \quad
Y_{G \rtimes\tau} \simeq \varinjlim_{(E,E^\prime)\subset \tau} Y_{G^E \times E^\prime} , \quad
X_{G\rtimes\tau}^{(2)} \simeq \varinjlim_{(E,E^\prime)\subset \tau} X^{(2)}_{G^E \times E^\prime}.$$
Applying $\Proj(-)$, we obtain the following schemes (the first two of which are projective)
$$\bP X_{G\rtimes \tau}, \quad \bP Y_{G\rtimes \tau}, \quad \bU_{G\rtimes \tau} \ \equiv \ \bP X_{G\rtimes \tau}\backslash 
\bP X_{G\rtimes \tau}^{(2)} \simeq
\varinjlim_{(E,E^\prime)\subset \tau} \bU_{G^E\times E^\prime}.$$

The natural maps 
$$p_{G\rtimes \tau}: X_{G\rtimes \tau} \ \to \ Y_{G\rtimes \tau}; 
\quad p_{G\rtimes \tau}: \bU_{G\rtimes \tau} \ \to \ \bP Y_{G\rtimes \tau}$$
are $\tau$-equivariant.

Furthermore, there is a natural $p$-isogeny $ \bP Y_{G\rtimes \tau}/\tau\ \stackrel{\simeq}{\to} \ \Pi(G\rtimes \tau).$
\end{prop}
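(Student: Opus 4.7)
The plan is to reduce each assertion to the analogous statements already established in the paper, now indexed by the richer partially ordered set of pairs $(E' \subset E)$ of elementary abelian $p$-subgroups of $\tau$. First I would verify that this indexing category is finite, that it carries a $\tau$-action by conjugation $g \cdot (E' \subset E) = ((E')^g \subset E^g)$, and that on coordinate algebras each inclusion of pairs $(F' \subset F) \to (E' \subset E)$ induces a surjective restriction map $k[V_r(G^F)] \otimes A_{F'} \twoheadrightarrow k[V_r(G^E)] \otimes A_{E'}$ coming from the closed immersion $G^E \times E' \hookrightarrow G^F \times F'$ of subgroup schemes of $G \rtimes \tau$. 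The naturality of the grading on $k[V_r(-)]$ from \cite{SFB1}, combined with the grading on $A_{(-)}$, $B_{(-)}$, and $A^{(2)}_{(-)}$ established in Propositions \ref{prop:limits}, \ref{prop:EmodE2}, and (\ref{eqn:A2tau}), ensures that the limits $A_{G \rtimes \tau}$, $B_{G \rtimes \tau}$, and $A^{(2)}_{G \rtimes \tau}$ carry natural gradings compatible with the induced $\tau$-action.

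Next I would establish the scheme-theoretic colimit descriptions by applying the iterated Ferrand pushout strategy of Proposition \ref{prop:limits-colimits}, realizing the partial order on pairs $(E' \subset E)$ as iterated Cartesian squares of surjections of finitely generated commutative $k$-algebras, to which Theorem \ref{thm:ferrand} applies at each step. Finite generation then follows from finite generation of each $A_{G^E \times E'}$ together with the iterated pushout presentation, exactly as in the proof of Proposition \ref{prop:limits}. For the projective versions, I would then follow the template of Proposition \ref{prop:projlimits}, constructing affine charts on $\Proj(A_{G \rtimes \tau})$ by inverting explicit homogeneous sections and observing that localization commutes with the colimits in the relevant iterated pushout squares. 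For the identification $\bU_{G \rtimes \tau} \simeq \varinjlim \bU_{G^E \times E'}$, the key point is that each embedding $G^E \times E' \hookrightarrow G^F \times F'$ carries $X^{(2)}$ into $X^{(2)}$, because $J_{E'}^2 = J_{F'}^2 \cap J_{E'}$ (and the $V_r$ factor is unchanged by the ${}^{(2)}$ construction), so the complement of $\bP X^{(2)}$ is preserved under the colimit, as in Remark \ref{remark:iterated}. The $\tau$-equivariance of $p_{G \rtimes \tau}$ in both the affine and projective settings then follows from the $\tau$-equivariance of each $p_{G^E \times E'}$, which in turn reduces to Proposition \ref{prop:EmodE2}.

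Finally, for the $p$-isogeny $\bP Y_{G \rtimes \tau}/\tau \stackrel{\approx}{\to} \Pi(G \rtimes \tau)$, the plan is to combine the colimit formula
\[
\Pi(G \rtimes \tau) \simeq \bigl(\varinjlim_{(E' \subset E)} \Pi(G^E \times E')\bigr)/\tau
\]
from \cite[Prop 4.12]{FP2} with the $p$-isogeny $(\bP Y_{G^E \times E'})/E' \stackrel{\approx}{\to} \Pi(G^E \times E')$ for each pair, derived from (\ref{eqn:proj-homeo}) applied to $G^E \times E'$ in place of $G \times \tau$. The action of $E'$ on the factor $V_r(G^E)$ is trivial because $E' \subset E$ and $G^E$ centralizes $E$, and its action on $\bP Y_{E'}$ is trivial because $E'$ is abelian; hence $(\bP Y_{G^E \times E'})/E' = \bP Y_{G^E \times E'}$. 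Passing to the colimit over pairs and then quotienting by the residual $\tau$-action produces the asserted $p$-isogeny, using the already established identification $\bP Y_{G \rtimes \tau} \simeq \varinjlim \bP Y_{G^E \times E'}$.

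The main obstacle I anticipate is bookkeeping rather than conceptual: one must carefully verify that the $p$-isogenies from Quillen's theorem \cite[Thm 7.1]{Q1} and from \cite[Thm 5.2]{SFB2} are compatible with all the transition maps in the colimit of pairs and with the overall $\tau$-action, so that the limits of $p$-isogenies remain $p$-isogenies. This should follow from the naturality statements for $\psi$ and $\rho$ used in the discussion preceding (\ref{eqn:proj-homeo}), combined with the finiteness of the indexing category.
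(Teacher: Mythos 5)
Your proposal follows essentially the same route as the paper's proof: an iterated Ferrand/pushout construction over the poset of pairs $(E'\subset E)$ to realize the affine and projective colimits (double induction on rank and on the list of subgroups of each rank), preservation of the $X^{(2)}$ loci under the transition maps to identify $\bU_{G\rtimes\tau}$ as a colimit, $\tau$-equivariance of $p_{G\rtimes\tau}$ inherited from the pieces, and for the final assertion the composition of the colimit formula for $\Pi(G\rtimes\tau)$ from \cite[4.12]{FP2} (and \cite[5.4]{FP1}) with the $p$-isogenies coming from \cite[Thm 5.2]{SFB2} and Quillen's theorem. One consistent bookkeeping slip to fix: a morphism $(F'\subset F)\to(E'\subset E)$ means $F'\subset E'\subset E\subset F$, so the induced inclusion of subgroup schemes is $G^F\times F'\hookrightarrow G^E\times E'$ and the surjection of coordinate algebras runs $k[V_r(G^E)]\otimes A_{E'}\twoheadrightarrow k[V_r(G^F)]\otimes A_{F'}$ (and correspondingly $J_{F'}^2=J_{E'}^2\cap J_{F'}$), i.e.\ the reverse of the directions you wrote; with that variance corrected your argument matches the paper's.
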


\begin{proof}
We consider the colimit $\varinjlim_{(E,E^\prime)\subset \tau} V_r(G^E) \times A_{J_{E^\prime}}$, constructed inductively
by push-out squares as follows.   We let $\{ E_{i,1}, \ldots E_{i,n_i} \}$ denote the set of  the elementary abelian 
subgroups of $\tau$ of rank $i$.   We begin with $V_r(G)$ and take the pushout of 
$$V_r(G) \times \{ 0 \} \ \leftarrow \ \coprod_{j=1}^{n_1} V_r(G^{E_{1,j}}) \times \{ 0 \} \ \to \  
\coprod_{j=1}^{n_1} V_r(G^{E_{1,j}}) \times \bA_{J(E_{1,j})},$$
which we denote by $C_1$.  By attaching each
$V_r(G^{E_{1,j}}) \times \bA_{J(E_{1,j})}$ one at a time, we may express $C_1$ as a succession of pushouts with both
``left arrow" and ``right arrow" a closed immersion.
We proceed inductively with the respect to $i$ to construct $C_{i+1}$.  Namely, we view $C_{i+1}$ as the pushout
(for a fixed $i$) of 
$$C_i  \ \leftarrow \ \coprod_{E_{i-i,j^\prime} \subset E_{i,j}} V_r(G^{E_{i,j}}) \times \bA_{J(E_{i-1,j^\prime})}\ \to 
\ \coprod_{j=1}^{n_i} V_r(G^{E_{i,j}}) \times \bA_{J(E_{i,j})}.$$
We proceed by induction on $j, \ 1 \leq j \leq n_j$, viewing $C_{i+1}$ as a colimit of pushouts
$C_{i,j}$ defined as the pushouts (now for a fixed $i,j$)
$$C_{i,j-1} \ \leftarrow \ \coprod_{E_{i-i,j^\prime} \subset E_{i,j}} V_r(G^{E_{i,j}}) \times \bA_{J(E_{i-1,j^\prime})} \ \to 
\  V_r(G^{E_{i,j}}) \times \bA_{J(E_{i,j})}.$$
We identify this pushout as the pushout (once again for fixed $i,j$) of
\begin{equation}
\label{eqn:finalcolimit}C_{i,j-1} \ \leftarrow \ \varinjlim_{F \in \cE(E_{i,j})^\prime} V_r(G^{E_{i,j}}) \times \bA_{J(F)}  
\ \to \  V_r(G^{E_{i,j}}) \times \bA_{J(E_{i,j})},
\end{equation}
where $\cE(E_{i,j})^\prime$ is the partially ordered subset of proper subgroups of $E_{i,j}$.  The colimit in 
(\ref{eqn:finalcolimit}) is shown to be representable and the map to $V_r(G^{E_{i,j}}) \times \bA_{J(E_{i,j})}$ is
verified to be a closed embedding using a simple version of the argument Proposition \ref{prop:limits-colimits}
which in turn uses Theorem \ref{thm:ferrand}.  The left map of (\ref{eqn:finalcolimit}) is the natural map from the
colimit to $C_{i,j-1}$ which is recursively seen to be a closed immersion.

Consequently, proceeding step by step along this iterated induction context, we conclude that 
$$X_{G \rtimes \tau} \ \equiv \ \varinjlim_{(E,E^\prime)\subset \tau} X_{G^E \times E^\prime} \ = 
\ \varinjlim_{(E,E^\prime)\subset \tau} V_r(G^E) \times A_{J_{E^\prime}}$$ 
is representable, constructed as the 
spectrum of the inverse limit of corresponding coordinate algebras of $V_r(G^E) \times A_{J_{E^\prime}}$.

The representability of $\varinjlim_{(E,E^\prime)\subset \tau} Y_{G^E \times E^\prime}, \ 
\varinjlim_{(E,E^\prime)\subset \tau} X_{G^E \times E^\prime}^{(2)}$ is proved in a completely similar fashion. 
The construction of the map $p_{G\rtimes \tau}: X_{G\rtimes \tau} \to Y_{G\rtimes \tau}$ follows easily from 
the functorial property of colimits.  
The associated projective schemes $\bP X_{G\rtimes \tau}, \ \bP Y_{G\rtimes \tau}$ and
the map $\bU_{G\rtimes \tau} \to \bP Y_{G\rtimes \tau}$ are constructed using the gradings as in Section \ref{sec:tau-equiv}.
The asserted $\tau$-equivariance of $p_{G\rtimes \tau}$ follows from the $\tau$-equivariance of $p_\tau$.

Finally, the fact that the natural map $(\varinjlim_{(E,E^\prime)\subset \tau} \Pi(G^E \times E^\prime))/\tau \ \stackrel{\sim}{\to} \ \Pi(G\rtimes \tau) $ is a $p$-isogeny
is essentially given in \cite[5.4]{FP1} (see also \cite[4.12]{FP2}).   On the other hand, 
the $p$-isogeny $\Pi(G^E \times E^\prime)  \ \stackrel{\sim}{\to} \
\Proj(k[V_r(G)] \otimes S^\bu(J_{E^\prime}/J_{E^\prime}^2))$ follows from \cite[Thm 5.2]{SFB2}.  
We identify $( \varinjlim_{(E,E^\prime)\subset \tau}  \Proj(k[V_r(G)] \otimes S^\bu(J_{E^\prime}/J_{E^\prime}^2)))/\tau$
with $\bP Y_{G\rtimes \tau}/\tau$.  Thus, composing the quotient by $\tau$ of the colimit of the second $p$-isogeny
with the first $p$-isogeny determines
$\bP Y_{G\rtimes \tau}/\tau \ \stackrel{\simeq}{\to} \  \Pi(G\rtimes \tau).$
\end{proof}

\begin{defn}
\label{defn:Theta-Grtimestau}
Consider the semi-direct product $G\rtimes \tau$ where $G$ is an infinitesimal group scheme over $k$ and 
$\tau$ is a finite group.  We define 
$$\Theta_{G\rtimes \tau} \ \equiv \  \varprojlim_{(E,E^\prime)\subset \tau} \Theta_{G^E \times E^\prime} \ \in \
\varprojlim_{(E,E^\prime)\subset \tau}A_{G^E\times E^\prime} \otimes k(G^E \otimes kE^\prime).$$

For any finite dimensional $k(G\rtimes\tau)$-module $M$, we define
$$\Theta_{G\rtimes \tau,M} \ \equiv \  \varprojlim_{(E,E^\prime)\subset \tau} \Theta_{G^E \times E^\prime,M}:
M \otimes A_{G\rtimes \tau} \ \to \ M \otimes A_{G\rtimes \tau}$$
as the limit of the maps $\Theta_{G^E \times E^\prime,M}: M \otimes A_{G^E\times E^\prime} \to  M \otimes A_{G^E\times E^\prime}$.
Here, we have identified $M \otimes A_{G\rtimes \tau}$ with $\varprojlim_{(E,E^\prime)\subset \tau}M \otimes A_{G^E\times E^\prime}$.
\end{defn}

\vskip .1in

\begin{prop}
\label{prop:Theta-Gtimestau}
For any $k(G\rtimes \tau)$-module $M$, $\Theta_{G\rtimes \tau,M}$ determines an $\cO_{\bP X_{G \rtimes \tau}}$-linear map
$$\bP \Theta_{G\rtimes \tau,M}: \cO_{\bP X_{G\times \tau}} \otimes M \ \to \ (\cO_{\bP X_{G\rtimes \tau}} \otimes M)(1)$$ 
which is $\tau$-equivariant and 
whose $p$-th iterate is 0.
\end{prop}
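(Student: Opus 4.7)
The plan is to reduce all three assertions to the corresponding statements for the pieces $G^E \times E'$ already handled by Proposition \ref{prop:Theta-GtimestauM} and Theorem \ref{thm:Gtimes-tau}, exploiting that by construction
$$\Theta_{G\rtimes\tau,M} \;=\; \varprojlim_{(E',E)\subset\tau} \Theta_{G^E\times E',M}$$
acting on $M\otimes A_{G\rtimes\tau} \simeq \varprojlim_{(E',E)\subset\tau} M\otimes A_{G^E\times E'}$, and that by Proposition \ref{prop:FP2} both $A_{G\rtimes\tau}$ and each $A_{G^E\times E'}$ are finitely generated graded $k$-algebras, with the grading on the inverse limit induced termwise from the gradings on the pieces.

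First I would produce the sheaf-level map $\bP\Theta_{G\rtimes\tau,M}$ by checking that $\Theta_{G\rtimes\tau,M}$ is a homogeneous $A_{G\rtimes\tau}$-linear endomorphism of the appropriate degree. Each piece $\Theta_{G^E\times E',M}$ is homogeneous, being the sum of the two homogeneous contributions $p_1^*\Theta_{G^E}$ and $p_2^*\Theta_{E'}^{(r-1)}$ in Definition \ref{defn:Theta-Gtimestau}; homogeneity is preserved under inverse limits of degree-preserving maps, so $\Theta_{G\rtimes\tau,M}$ is itself homogeneous. Applying the Serre correspondence between finitely generated graded modules and coherent sheaves on the $\Proj$ of a finitely generated graded $k$-algebra then converts $\Theta_{G\rtimes\tau,M}$ into the desired $\cO_{\bP X_{G\rtimes\tau}}$-linear map of coherent sheaves with the indicated target Serre twist.

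For $p$-nilpotence, Proposition \ref{prop:Theta-GtimestauM} provides $\Theta_{G^E\times E',M}^p=0$ for every pair $(E',E)$; taking the inverse limit yields $(\Theta_{G\rtimes\tau,M})^p=0$ on $M\otimes A_{G\rtimes\tau}$, which translates immediately to $(\bP\Theta_{G\rtimes\tau,M})^p=0$ on $\bP X_{G\rtimes\tau}$. The main obstacle, and the step requiring the most care, is the $\tau$-equivariance: the $\tau$-action on the indexing category of pairs sends $(E',E)$ to $((E')^g, E^g)$ and intertwines the inclusion $G^E\times E'\hookrightarrow G\rtimes\tau$ with $G^{E^g}\times (E')^g\hookrightarrow G\rtimes\tau$ via the inner automorphism $c_g$ of $G\rtimes\tau$. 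I would verify that $c_g$ carries $\Theta_{G^E\times E',M}$ to $\Theta_{G^{E^g}\times (E')^g,M}$ by combining the $G$-equivariance of $\Theta_G$ for the conjugation action on $V_r(G)$ (which restricts to compatibility of $\Theta_{G^E}$ under $c_g\colon G^E\stackrel{\sim}{\to} G^{E^g}$) with the $\tau$-equivariance of $\Theta_\tau$, and hence of its Frobenius twist $\Theta_\tau^{(r-1)}$, from Proposition \ref{prop:Theta-tauM}. Passing to the $\tau$-invariant inverse limit then gives $\tau$-equivariance of $\Theta_{G\rtimes\tau,M}$ as an endomorphism of $A_{G\rtimes\tau}\# k\tau$-modules, and applying $\Proj$ transfers this structure to $\bP\Theta_{G\rtimes\tau,M}$, completing the three claims.
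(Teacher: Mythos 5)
Your proposal is correct and follows essentially the same route as the paper: realize $\bP\Theta_{G\rtimes\tau,M}$ via the graded/Serre correspondence from the inverse limit of the graded maps $\Theta_{G^E\times E',M}$, deduce $p$-nilpotence from the vanishing of each $(\bP\Theta_{G^E\times E',M})^p$, and obtain $\tau$-equivariance from the fact that $g\in\tau$ permutes the indexing pairs $(E',E)$ while intertwining the corresponding operators. The only cosmetic difference is in the equivariance step, where the paper writes out $\Theta_{G\rtimes\tau,M}(m)$ as an explicit sum of simple tensors and reindexes, whereas you factor the same check through the conjugation-compatibility of $\Theta_{G^E}$ and the already-established $\tau$-equivariance of $\Theta_{\tau}$ from Proposition \ref{prop:Theta-tauM}.
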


\begin{proof}
We view $\Theta_{G\rtimes \tau,M}$ as a map of the graded $A_{G\rtimes\tau}$-module $M \otimes A_{G\rtimes\tau}$ to the 
graded module $M \otimes A_{G\rtimes\tau}[1]$, thereby determining $\bP \Theta_{G\rtimes \tau,M}$.  Since
$\Theta_{G\rtimes \tau,M}$ is the limit of graded maps of $A_{G^E\times E^\prime}$-modules, 
$$\bP \Theta_{G\rtimes \tau,M} \ = \ \varprojlim_{(E,E^\prime)\subset \tau} \bP \Theta_{G^E\times E^\prime,M} $$
Since each $\bP \Theta_{G^E\times E^\prime,M}$ has $p$-th power is 0, the $p$-th power of  $\bP \Theta_{G\rtimes \tau,M}$
is also 0.

To prove that $\Theta_{G\rtimes \tau,M}$ is $\tau$-equivariant, we adapt the proof of Proposition \ref{prop:Theta-tauM}
for $\Theta_{\tau,M}$.  For $m \in M$ we write
$$\Theta_{G\rtimes \tau,M}(m) \ = \ \varprojlim_{(E,E^\prime)\subset \tau} 
\sum \phi_{G^E}\otimes f_{E^\prime} \otimes (X_{G^E} \otimes u_{E^\prime})(m),$$
where for each $(E,E^\prime)$ the sum is indexed by finitely many 4-tuples
$$\phi_{G^E} \in k[V_r(G^E)], \ f_{E^\prime} \in S^\bu(J_{E^\prime}^*), \ X_{G^E} \in k(G^E)), \
u_{E^\prime} \in J_{E^\prime}.$$
Applying $g \in \tau$ to $\Theta_{G\rtimes \tau,M}(m)$ gives
$$\varprojlim_{(E,E^\prime)\subset \tau} \sum g(\phi_{G^E}) \otimes g(f_{E^\prime}) \otimes g\cdot 
(X_{G^E} \otimes u_{E^\prime})(m)$$ 
which equals $\Theta_{G\rtimes \tau,M}(g(m))$ (after reindexing).

The $\tau$-equivariance of  $\Theta_{G\rtimes \tau,M}$ implies the $\tau$-equivariance of $\bP \Theta_{G\rtimes \tau,M}$.
\end{proof}

We provide one last construction of vector bundles, this time associated to finite dimensional 
$G \rtimes \tau$-module of constant $j$-rank.

\begin{thm}
\label{thm:proj-Grtimestau}
Consider the finite group scheme $G \rtimes \tau$, with $G$ infinitesimal of height $\leq r$ and $\tau$ a finite
group.   If $M$ is a finite dimensional $G \rtimes \tau$-module of constant $j$-rank, then the restrictions 
to $\bU_{G\rtimes \tau}$ of the coherent sheaves on $\bP X_{G\rtimes \tau}$
$$ker\{ (\bP \Theta_{G\rtimes \tau,M})^j\}, \quad coker\{ (\bP \Theta_{G\rtimes \tau,M})^j) \}, \quad 
im\{ (\bP \Theta_{G\rtimes \tau,M})^j\} $$
$\tau$-equivariant are vector bundles on $\bU_{G\rtimes \tau}$. 
\end{thm}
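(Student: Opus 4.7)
The plan is to reduce the statement to Theorem \ref{thm:Gtimes-tau} (the $G \times \tau$ case, in which $\tau$ acts trivially on $G$) via the colimit description $\bU_{G\rtimes \tau} \simeq \varinjlim_{(E,E')\subset \tau} \bU_{G^E\times E'}$ established in Proposition \ref{prop:FP2}. First, Proposition \ref{prop:Theta-Gtimestau} supplies all the formal properties: $\bP\Theta_{G\rtimes \tau,M}$ is a $\tau$-equivariant, $p$-nilpotent endomorphism (with a degree twist) of a $\tau$-equivariant coherent sheaf, so the kernel, image, and cokernel of each of its powers are automatically $\tau$-equivariant coherent sheaves on $\bP X_{G\rtimes \tau}$. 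Only local freeness on $\bU_{G\rtimes \tau}$ remains to be shown.

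The next step is to verify that restriction behaves well along each closed immersion $\bU_{G^E\times E'} \hookrightarrow \bU_{G\rtimes \tau}$. By construction, $\Theta_{G\rtimes \tau,M}$ is the inverse limit over pairs $(E, E')$ of the operators $\Theta_{G^E \times E',M}$, so after projectivizing and restricting, $\bP\Theta_{G\rtimes \tau,M}$ pulls back to $\bP\Theta_{G^E\times E',M}$ on each $\bU_{G^E\times E'}$. To apply Theorem \ref{thm:Gtimes-tau} to the restricted module $M|_{G^E\times E'}$, one must know this restriction has constant $j$-rank; but every $\pi$-point $\alpha_K \colon K[t]/t^p \to K(G^E \times E')$ factoring through an abelian subgroup is also a $\pi$-point of $G \rtimes \tau$ via the inclusion $G^E \times E' \hookrightarrow G \rtimes \tau$, and the rank of $\alpha_K^j$ on $K \otimes M$ is computed in either algebra. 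Thus constant $j$-rank for $M$ as a $k(G \rtimes \tau)$-module passes to constant $j$-rank on each $G^E \times E'$.

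Putting these pieces together, the kernel, image, and cokernel of $(\bP\Theta_{G\rtimes\tau,M})^j$ restrict on each $\bU_{G^E\times E'}$ to a vector bundle by Theorem \ref{thm:Gtimes-tau}. To conclude that the global restrictions to $\bU_{G\rtimes \tau}$ are vector bundles, the cleanest argument is to invoke Proposition \ref{prop:locfree}: local freeness of a coherent sheaf $\cF$ on the reduced scheme $\bU_{G\rtimes \tau}$ is detected by the fiber dimension $\dim_{k(x)}(\cF_{(x)} \otimes k(x))$ being locally constant, and since $\bU_{G\rtimes \tau}$ is covered (set-theoretically, in fact scheme-theoretically via the colimit) by the images of the closed immersions $\bU_{G^E\times E'} \hookrightarrow \bU_{G\rtimes \tau}$, each fiber may be computed inside a single $\bU_{G^E\times E'}$ where the dimension is independent of the point by constant $j$-rank.

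The main obstacle is the patching step: one must be confident that constant fiber dimension on each piece of the colimit really does yield a vector bundle on the whole. The subtlety is that $\bU_{G\rtimes \tau}$ is built by iterated push-outs along closed immersions of the shape in Remark \ref{remark:iterated}, so different $\bU_{G^E\times E'}$ meet along closed subschemes of the form $\bU_{G^F \times F'}$ with $(F,F') \supset (E,E')$; one needs the coherent sheaves' restrictions to agree on these intersections, which is forced by the compatibility of the $\bP\Theta_{G^E \times E',M}$ under further restriction (Proposition \ref{prop:tau-natural} adapted to this setting, applied to both factors). Once agreement on overlaps is established, Milnor-type patching as in Proposition \ref{prop:Milnor-patching} assembles the local vector bundles into a global one, and the $\tau$-equivariant structure is inherited from the $\tau$-action on the indexing category of pairs $(E, E')$.
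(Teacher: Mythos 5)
Your proposal is correct and follows essentially the same route as the paper: reduce to the pieces $\bU_{G^E\times E^\prime}$ of the colimit, apply Theorem \ref{thm:Gtimes-tau} there, and assemble via Milnor's patching argument (Proposition \ref{prop:Milnor-patching}). The paper's proof is a two-line version of exactly this; your added details on compatibility of the operators under restriction, the persistence of constant $j$-rank, and the fiber-dimension criterion of Proposition \ref{prop:locfree} are correct elaborations of what the paper leaves implicit.
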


\begin{proof}
Since $\bU_{G\rtimes \tau} \ = \ \varinjlim_{(E,E^\prime)\subset \tau} \bU_{G^E\times E^\prime}$, it suffices to observe that each 
of $ker\{ (\bP \Theta_{G\rtimes \tau,M})^j\}, \ coker\{ (\bP \Theta_{G\rtimes \tau,M})^j) \},\
im\{ (\bP \Theta_{G\rtimes \tau,M})^j\} $ restrict to vector bundles on each $\bU_{G^E\times E^\prime}$ by 
Theorem \ref{thm:Gtimes-tau} and apply Milnor's patching argument \cite[Thm 2.1]{Mil}.
\end{proof}

\end{document}